\author{Dilip Raghavan}
\address{Department of Mathematics\\
National University of Singapore\\
Singapore 119076}
\email{raghavan@math.nus.edu.sg}
\urladdr{http://www.math.toronto.edu/raghavan}
\author{Stevo Todorcevic}
\thanks{Second author partially supported by NSERC}
\address{Dpartment of Mathematics, University of Toronto, Toronto, Canada, M5S 2E4}
\address{Institut de Math\'{e}matique de Jussieu, UMR 7586, Case 247, 4 place Jussieu, 75252 Paris Cedex, France}
\email{stevo@math.toronto.edu, todorcevic@math.jussieu.fr}
\date{\today}
\subjclass[2010]{03E05, 03E17, 03E65}
\keywords{combinatorial dichotomies, partition relation, P-ideal dichotomy, cardinal invariants, coherent Suslin tree, Laver property}
\title{Combinatorial dichotomies and cardinal invariants}
\def\polhk#1{\setbox0=\hbox{#1}{\ooalign{\hidewidth
    \lower1.5ex\hbox{`}\hidewidth\crcr\unhbox0}}}
\newtheorem{Theorem}{Theorem}
\newtheorem{Claim}[Theorem]{Claim}
\newtheorem{Lemma}[Theorem]{Lemma}
\newtheorem{Cor}[Theorem]{Corollary}
\newtheorem{conj}[Theorem]{Conjecture}
\newtheorem*{samp}{Prototypical Theorem}
\newtheorem*{gq}{General Problem 1}
\newtheorem*{gqq}{General Problem 2}
\newtheorem{prob}[Theorem]{Problem}
\theoremstyle{definition}
\newtheorem{Def}[Theorem]{Definition}
\theoremstyle{remark}
\newcommand{\forces}{\Vdash}
\newcommand{\restrict}{\upharpoonright}
\newcommand{\forallbutfin}{{\forall}^{\infty}}
\newcommand{\existsinf}{{\exists}^{\infty}}
\renewcommand{\c}{\mathfrak{c}}
\renewcommand{\b}{\mathfrak{b}}
\renewcommand{\d}{{\mathfrak{d}}}
\newcommand{\x}{{\mathfrak{x}}}
\newcommand{\p}{{\mathfrak{p}}}
\renewcommand{\[}{\left[}
\renewcommand{\]}{\right]}
\renewcommand{\P}{\mathbb{P}}
\newcommand{\Q}{\mathbb{Q}}
\newcommand{\R}{\mathbb{R}}
\newcommand{\lc}{\left|}
\newcommand{\rc}{\right|}
\newcommand\ZFC{\mathrm{ZFC}}
\newcommand\MA{\mathrm{MA}}
\newcommand\PFA{\mathrm{PFA}}
\newcommand\PID{\mathrm{PID}}
\newcommand\MM{\mathrm{MM}}
\newcommand\RC{\mathrm{RC}}
\newcommand\CH{\mathrm{CH}}
\newcommand\CD{\mathrm{CD}}
\newcommand{\BS}{{\omega}^{\omega}}
\DeclareMathOperator{\pred}{pred}
\DeclareMathOperator{\cone}{cone}
\DeclareMathOperator{\cl}{cl}
\DeclareMathOperator{\tr}{tr}
\DeclareMathOperator{\otp}{otp}
\DeclareMathOperator{\cov}{cov}
\DeclareMathOperator{\add}{add}
\DeclareMathOperator{\cof}{cof}
\DeclareMathOperator{\dom}{dom}
\DeclareMathOperator{\succc}{succ}
\DeclareMathOperator{\hgt}{ht}
\newcommand{\Pset}{\mathcal{P}}
\newcommand{\M}{\mathcal{M}}
\newcommand{\N}{\mathcal{N}}
\newcommand{\G}{{\mathscr{G}}}
\newcommand{\GG}{{\mathcal{G}}}
\newcommand{\HH}{{\mathcal{H}}}
\newcommand{\U}{{\mathcal{U}}}
\newcommand{\cube}{{\[\omega\]}^{\omega}}
\newcommand{\I}{{\mathcal{I}}}
\newcommand{\J}{{\mathcal{J}}}
\newcommand{\LLL}{{\mathcal{L}}}
\newcommand{\F}{{\mathcal{F}}}
\newcommand{\V}{{\mathbf{V}}}
\newcommand{\VG}{{{\mathbf{V}}[G]}}
\newcommand{\Sa}{\mathbb{S}}
\newcommand{\wo}{\blacktriangleleft}
\begin{document}
\begin{abstract}
Assuming the P-ideal dichotomy, we attempt to isolate those cardinal characteristics of the continuum that are correlated with two well-known consequences of the proper forcing axiom. We find a cardinal invariant $\x$ such that the statement that $\x > {\omega}_{1}$ is equivalent to the statement that $1$, $\omega$, ${\omega}_{1}$, $\omega \times {\omega}_{1}$, and ${\[{\omega}_{1}\]}^{< \omega}$ are the only cofinal types of directed sets of size at most ${\aleph}_{1}$.
We investigate the corresponding problem for the partition relation ${\omega}_{1} \rightarrow ({\omega}_{1}, \alpha)^2$ for all $\alpha < {\omega}_{1}$.
To this effect, we investigate partition relations for pairs of comparable elements of a coherent Suslin tree $\Sa$.
We show that a positive partition relation for such pairs follows from the maximal amount of the proper forcing axiom compatible with the existence of $\Sa$.
As a consequence we conclude that after forcing with the coherent Suslin tree $\Sa$ over a ground model satisfying this relativization of the proper forcing axiom, ${\omega}_{1} ~\rightarrow~{({\omega}_{1}, \alpha)}^{2}$ for all $\alpha < {\omega}_{1}$.
We prove that this positive partition relation for $\Sa$ cannot be improved by showing in $\ZFC$ that $\Sa \not\rightarrow ({\aleph}_{1}, \omega+2)^2$.
\end{abstract}
\maketitle
\section{Introduction} \label{sec:intro}
An interesting, though lesser known, phenomenon in set theory is that cardinal invariants of the continuum can be used to calibrate the strength of various mathematical propositions (that do not unnecessarily involve sets of reals) in the presence of certain kinds of combinatorial dichotomies.
These mathematical statements are invariably consequences of forcing axioms such as $\PFA$ or $\MM$, and they are negated by $\CH$.
The combinatorial dichotomies we are interested in are compatible with $\CH$, but they do keep a considerable amount of the strength of $\PFA$ or $\MM$ by, for example, negating square-principles or reflecting stationary sets. In fact, they have a tendency of pushing several mathematical statements down to concrete questions about combinatorial properties of sets of reals that seem to be expressible in terms of cardinal invariants of the continuum. A typical theorem of the sort we have in mind looks as follows.
\begin{samp}
 Assume $\CD$. Then the following are equivalent:
  \begin{enumerate}
   \item
    $\x > {\omega}_{1}$.
   \item
    $\phi$.
  \end{enumerate}
\end{samp}
Here $\phi$ is some mathematical statement, $\x$ is a cardinal invariant, and $\CD$ is a combinatorial dichotomy that is consistent with $\CH$.
As relations between cardinal invariants have been well-investigated, theorems like this permit us to calibrate the relative strength of various mathematical propositions over the base theory $\ZFC + \CD$.
Examples of $\CD$ include Rado's Conjecture ($\RC$) and the P-Ideal Dichotomy ($\PID$).
The statement $\phi$ can come from different areas of mathematics.
We illustrate this with two recent prototypical examples.
\begin{Theorem}[Todorcevic and Torres-Perez~\cite{victor}] \label{thm:victor}
 Assume $\RC$.
 Then the following are equivalent:
 \begin{enumerate}
  \item
  $\c > {\omega}_{1}$.
  \item
  There are no special ${\omega}_{2}$-Aronszajn trees.
 \end{enumerate}
\end{Theorem}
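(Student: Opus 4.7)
The proof splits naturally into two directions, with the implication $(2) \Rightarrow (1)$ being essentially classical and requiring no use of $\RC$. My plan for this direction is to argue the contrapositive: if $\c = {\omega}_{1}$, then $\CH$ holds, and a classical construction of Specker under $\CH$ produces a special ${\omega}_{2}$-Aronszajn tree (taking, for instance, a tree of bounded injections from countable ordinals into ${\omega}_{1}$, ordered by end-extension, after thinning by a diamond or $\CH$-style enumeration). Hence the non-existence of such trees forces $\CH$ to fail, i.e., $\c > {\omega}_{1}$.

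For the substantial direction $(1) \Rightarrow (2)$, I assume $\RC$ and $\c > {\omega}_{1}$ and suppose for contradiction that $T$ is a special ${\omega}_{2}$-Aronszajn tree, witnessed by a decomposition $T = \bigcup_{\alpha < {\omega}_{1}} A_{\alpha}$ into antichains. The strategy is to construct from $T$ and this decomposition an auxiliary tree $T^{\ast}$ of height ${\omega}_{1}$ and cardinality $\aleph_2$ whose specialness would transfer back to $T$, but then to apply $\RC$ in the form: if every subtree of $T^{\ast}$ of size $\aleph_1$ is special, then $T^{\ast}$ is special. The two parallel tasks are thus to verify that every $\aleph_1$-sized subtree of $T^{\ast}$ is special --- leveraging that such a subtree only ``sees'' countably many of the antichains $A_{\alpha}$ after intersection with a suitable countable elementary submodel --- and simultaneously to show that $T^{\ast}$ itself cannot be special, which is where $\c > {\omega}_{1}$ enters.

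The role of $\c > {\omega}_{1}$ is, I expect, to supply an ${\omega}_{2}$-sequence of reals (or equivalently, branches through some fixed countable structure) that can be coded into the levels of $T$ via the specializing function $\alpha \colon T \to {\omega}_{1}$, producing at cofinally many levels an incompatibility pattern which cannot be swallowed by any $\aleph_1$-indexed antichain decomposition of $T^{\ast}$. Concretely, I would try to let $T^{\ast}$ be a tree of partial finite-to-one ``coherent colorings'' of initial segments of $T$ that respect the decomposition $\{A_{\alpha}\}$, so that a global specialization of $T^{\ast}$ would give a refined specialization of $T$ into $\aleph_1$-many antichains of a constrained form, contradicted by the abundance of reals provided by $\c > {\omega}_{1}$.

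The main obstacle is clearly the combined verification in the previous paragraph: choosing $T^{\ast}$ so that $\RC$ applies nontrivially (every $\aleph_1$-subtree special) while the globally non-special conclusion can be derived from $\c > {\omega}_{1}$. A subsidiary difficulty is that $\RC$ is a statement about trees of height ${\omega}_{1}$, whereas $T$ has height ${\omega}_{2}$, so the reduction to a height-${\omega}_{1}$ auxiliary tree must be done carefully, presumably by passing through an elementary submodel chain $\langle M_{\xi} : \xi < {\omega}_{1} \rangle$ of size $\aleph_1$ whose union has size $\aleph_1$ but captures enough of $T$ to transmit the contradiction.
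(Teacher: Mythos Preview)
This theorem is not proved in the present paper; it is quoted in the introduction as a result of Todorcevic and Torres-Perez (cited as~\cite{victor}) to illustrate the general phenomenon under discussion. There is therefore no proof here to compare your proposal against.

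That said, your proposal is not a proof but a plan, and you acknowledge as much. The direction $(2)\Rightarrow(1)$ is fine: Specker's classical $\CH$ construction of a special ${\omega}_{2}$-Aronszajn tree gives the contrapositive without any appeal to $\RC$. For $(1)\Rightarrow(2)$, however, you never actually define the auxiliary tree $T^{\ast}$; phrases like ``a tree of partial finite-to-one `coherent colorings' of initial segments of $T$'' are not a definition, and you yourself flag the verification that every ${\aleph}_{1}$-sized subtree of $T^{\ast}$ is special while $T^{\ast}$ itself is not as ``the main obstacle.'' That obstacle is the entire content of the theorem. Your suggestion that $\c>{\omega}_{1}$ supplies ``an ${\omega}_{2}$-sequence of reals'' to be ``coded into the levels of $T$'' does not connect to any concrete argument, and the remark about passing through an elementary chain $\langle M_{\xi}:\xi<{\omega}_{1}\rangle$ whose union has size ${\aleph}_{1}$ is not obviously relevant, since the tree $T$ has size ${\aleph}_{2}$.

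If you want to carry this out, you should consult the original paper~\cite{victor}: the actual argument builds, from a putative special ${\omega}_{2}$-Aronszajn tree and the failure of $\CH$, a concrete nonspecial tree of height ${\omega}_{1}$ all of whose subtrees of size ${\aleph}_{1}$ are special, which directly contradicts $\RC$. The point is that the construction of $T^{\ast}$ and the two verifications are genuinely delicate and cannot be left as an exercise.
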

Here $\c$ is the size of the continuum, the most basic of cardinal invariants.
\begin{Theorem}[Brech and Todorcevic\cite{brechtod}]\label{thm:christina}
 Assume $\PID$.
 Then the following are equivalent:
 \begin{enumerate}
  \item
    $\b > {\omega}_{1}$.
  \item
  Every non-separable Asplund space has an uncountable almost bi-orthogonal system.
 \end{enumerate}
\end{Theorem}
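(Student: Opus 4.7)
The plan is to prove the two implications separately, with $(2) \Rightarrow (1)$ being essentially a ZFC construction and $(1) \Rightarrow (2)$ a standard $\PID$ dichotomy argument.

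For $(2) \Rightarrow (1)$, I would argue the contrapositive: assuming $\b = {\omega}_{1}$, build a non-separable Asplund space with no uncountable almost biorthogonal system. A natural candidate is a suitably renormed $c_{0}({\omega}_{1})$ whose unit ball is controlled by a fixed unbounded family $\{f_{\alpha} : \alpha < {\omega}_{1}\} \subseteq \BS$: one arranges that the existence of an uncountable almost biorthogonal system would force an uncountable subfamily of $\{f_{\alpha}\}$ to be dominated modulo finite, contradicting unboundedness. The Asplund property should be preserved by ensuring the renorming is locally uniformly convex on every separable subspace.

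For $(1) \Rightarrow (2)$, assume $\PID$ and $\b > {\omega}_{1}$, and let $X$ be a non-separable Asplund space. By fragmentability of the dual ball, I would extract via a recursion of length ${\omega}_{1}$ a bounded system $\{(x_{\alpha}, x^{\ast}_{\alpha}) : \alpha < {\omega}_{1}\}$ with $x^{\ast}_{\alpha}(x_{\alpha}) \geq \varepsilon$ for some fixed $\varepsilon > 0$. Define a P-ideal $\I$ on ${\omega}_{1}$ whose members are those countable $A$ on which the interaction matrix $(x^{\ast}_{\alpha}(x_{\beta}))_{\alpha \neq \beta \in A}$ is ``almost diagonal'' in a quantitative sense; for instance, $A \in \I$ iff for each $n$ the set $\{(\alpha,\beta) \in {A}^{2} : \alpha \neq \beta, \lc x^{\ast}_{\alpha}(x_{\beta}) \rc > 1/n\}$ is finite. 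Verifying that $\I$ is a genuine P-ideal is where fragmentability must enter. Apply $\PID$: in the $T_{1}$ alternative, an uncountable $Y \subseteq {\omega}_{1}$ with ${[Y]}^{\omega} \subseteq \I$ gives the desired almost biorthogonal subsystem after routine diagonal thinning. In the $T_{2}$ alternative, ${\omega}_{1} = {\bigcup}_{n \in \omega} {Y}_{n}$ where each ${Y}_{n}$ is orthogonal to $\I$, so on some uncountable ${Y}_{n}$ each $\alpha$ has infinitely many ``non-negligible'' interactions with an initial segment of ${Y}_{n}$; coding these profiles as functions $g_{\alpha} \in \BS$ via a continuous chain of countable elementary submodels, one should extract a ${\leq}^{\ast}$-unbounded ${\omega}_{1}$-family, contradicting $\b > {\omega}_{1}$.

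The main obstacle will be calibrating the P-ideal $\I$ precisely. It must be simultaneously loose enough to satisfy the P-ideal axiom (closure under $\subseteq^{\ast}$ and countable almost-sums), tight enough that $T_{1}$ yields a system that qualifies as almost biorthogonal in the target definition, and structured enough that $T_{2}$ genuinely produces an unbounded family in $(\BS, {\leq}^{\ast})$. The Asplund hypothesis must pull its weight not only in the initial transfinite extraction but also in establishing the P-ideal property, presumably via fragmentability of weak-$\ast$ compact subsets of $X^{\ast}$ and the resulting control on how ``interaction sets'' behave under countable unions.
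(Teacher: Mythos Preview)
This theorem is not proved in the present paper at all; it is quoted in the introduction as a result of Brech and Todorcevic \cite{brechtod}, alongside the remark that the implication from (2) to $\b > {\omega}_{1}$ is an earlier result of Todorcevic (\cite{partition}, Chapter~2) and that $\PID + {\MA}_{{\aleph}_{1}} \Rightarrow (2)$ is in \cite{todbio}. There is therefore no proof here against which to compare your proposal.

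That said, your sketch for $(2)\Rightarrow(1)$ has a genuine gap. An equivalent renorming of $c_{0}({\omega}_{1})$ cannot destroy uncountable (almost) biorthogonal systems: the unit vectors together with the coordinate functionals form an exactly biorthogonal system of norm one, and under any equivalent norm this same system remains bounded and biorthogonal. More generally, the existence of an almost biorthogonal system is invariant under equivalent renormings, so no renorming argument can produce the desired counterexample. The construction one actually needs under $\b = {\omega}_{1}$ builds a genuinely new space (in the spirit of the tree- or $\rho$-function constructions in \cite{partition}), not a renormed classical one.

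Your outline for $(1)\Rightarrow(2)$ is closer to the mark in spirit: a P-ideal on an extracted system, with the Asplund hypothesis entering through fragmentability of the dual ball both in the initial extraction and in verifying the P-ideal property, and with the second $\PID$ alternative producing an unbounded family contradicting $\b > {\omega}_{1}$. You correctly identify the calibration of $\I$ as the crux; what you have written is a plan rather than a proof, but the architecture is plausible. For the details you would have to consult \cite{brechtod}.
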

Here $\b$ is the bounding number; its precise definition is given in Section \ref{sec:notation}.
For more regarding the big picture surrounding such results, consult \cite{combdic}.

The purpose of this paper is to add to the analysis of $\PID$ from this point of view; we do not have results about $\RC$ here.
$\PID$ is a well known consequence of $\PFA$ that is consistent with $\CH$ (See Section \ref{sec:notation} below where we give a definition of $\PID$.)
Indeed, it is strong enough to imply many of the consequences of $\PFA$ that don't contradict $\CH$.
For example, $\PID$ implies that there are no Suslin trees (see~\cite{PID}), it implies that $\square(\theta)$ fails for every ordinal $\theta$ of cofinality $> {\omega}_{1}$ (see \cite{GPID}), it implies the Singular Cardinals Hypothesis (see~\cite{viale}), and it implies that ${\square}_{\kappa, \omega}$ fails for all uncountable cardinals $\kappa$ (see~\cite{weaksquares}).
For many consequences of $\PFA$ contradicting $\CH$, $\PID$ tends to reduce the amount of $\PFA$ involved to the hypothesis that some cardinal invariant of the continuum is bigger than ${\omega}_{1}$.
The pseudo-intersection number $\p$ (see Section \ref{sec:notation} for definition), being smaller than most of the usual cardinals, almost always suffices.
We are interested in finding out the precise cardinal invariant which is needed for several specific consequences of $\PFA$. So our general project is twofold:
\begin{gq}
 Given a statement $\phi$ which is a consequence of $\PID + {\MA}_{{\aleph}_{1}}$, find a cardinal invariant $\x$ such that $\phi$ is equivalent to $\x > {\omega}_{1}$ over $\ZFC + \PID$.
\end{gq}
\noindent For example, if $\phi$ is the statement that every non-separable Asplund space has an uncountable almost bi-orthogonal system (see \cite{hmvz} for definitions), then it is a theorem of Todorcevic~\cite{todbio} that $\PID$ + ${\MA}_{{\aleph}_{1}}$ implies $\phi$, while another result of Todorcevic (see \cite{partition}, Chapter 2) shows that $\phi$ implies $\b > {\omega}_{1}$.
So the result of Theorem \ref{thm:christina} above came as an answer to this version of the general problem.

General Problem 1 asks if the influence of $\PFA$ on $\phi$ can be decomposed into a part which is consistent with $\CH$ and into another $\CH$ violating part that is precisely captured by the cardinal invariant $\x$.
This reveals the nature of the combinatorial phenomenon on the reals needed for $\phi$.
Another motivation is that one often has to find a new and sharper proof of $\phi$ in order to accomplish this project.
A slightly less ambitious project is
\begin{gqq} \label{g:gqq}
 Given a statement $\phi$ which is a consequence of $\PID + \p > {\omega}_{1}$, investigate whether $\phi$ is equivalent to $\p > {\omega}_{1}$ over $\ZFC + \PID$.
\end{gqq}
A canonical model for investigating this can be obtained by forcing with a coherent Suslin tree $\Sa$ over a model of $\PFA(\Sa)$ (see Section \ref{sec:notation} for the precise definition of a coherent Suslin tree).
Here $\PFA(\Sa)$ is the maximal amount of $\PFA$ that is consistent with the existence of $\Sa$.
\begin{Def} \label{def:pfas}
Let $\Sa$ be a coherent Suslin tree.
$\PFA(\Sa)$ is the following statement.
If $\P$ is a poset which is proper and preserves $\Sa$ and $\{{D}_{\alpha}: \alpha < {\omega}_{1}\}$ is a collection of dense subsets of $\P$, then there is a filter $G$ on $\P$ such that $\forall \alpha < {\omega}_{1}\[G \cap {D}_{\alpha} \neq 0\]$.
\end{Def}
The consistency of $\PFA(\Sa)$ can be proved assuming the existence of a supercompact cardinal by iterating with countable support all proper posets which preserve $\Sa$.
It is well known that many of the consequences of $\PFA$ hold after forcing with $\Sa$ over a ground model satisfying $\PFA(\Sa)$.
In particular, $\PID$ holds.
Moreover, almost all of the cardinal invariants of the continuum are equal to ${\omega}_{2}$.
However, $\p = {\omega}_{1}$.
Therefore, in this model, one gets most of the consequences of $\PFA$ that are consistent with $\PID + \p = {\omega}_{1}$.
If a consequence of $\PFA$ is \emph{not equivalent} to $\p > {\omega}_{1}$ over $\ZFC + \PID$, then it is very likely to be true in this model.
Thus this model is useful for providing negative answers to the General Problem 2 above.

In this paper, we investigate two well-known consequences of $\PFA$ in view these two general problems.
The first one concerns Tukey theory.
Recall that a poset $\langle D, \leq \rangle$ is \emph{directed} if any two members of $D$ have an upper bound in $D$.
A set $X \subset D$ is \emph{unbounded in $D$} if it doesn't have an upper bound in $D$.
A set $X \subset D$ is said to be \emph{cofinal in $D$} if $\forall y \in D \exists x \in X \[y \leq x \]$.
Given directed sets $D$ and $E$, a map $f: D \rightarrow E$ is called a \emph{Tukey map} if the image (under $f$) of every unbounded subset of $D$ is unbounded in $E$.
A map $g: E \rightarrow D$ is called a \emph{convergent map} if the image (under $g$) of every cofinal subset of $E$ is cofinal in $D$.
It is easy to see that there is a Tukey map $f: D \rightarrow E$ iff there exists a convergent $g: E \rightarrow D$.
When this situation obtains, we say that $D$ is \emph{Tukey reducible} to $E$, and we write $D \; {\leq}_{T} \; E$.
This induces an equivalence relation on directed posets in the usual way: $D \; {\equiv}_{T} \; E$ iff both $D \; {\leq}_{T} \; E$ and $E \; {\leq}_{T} \; D$.
If $D \; {\equiv}_{T} \; E$, we say that $D$ and $E$ are \emph{Tukey equivalent} or have the same \emph{cofinal type}, and this is intended to capture the idea that $D$ and $E$ have ``the same cofinal structure''.
As support for this, it can be shown that $D \; {\equiv}_{T} \; E$ iff there is a directed set $R$ into which both $D$ and $E$ embed as cofinal subsets, so that $D$ and $E$ describe the same {\it cofinal type}, the one of $R$.

These notions first arose in the Moore--Smith theory of convergence studied by general topologists (see \cite{tukeybook} and \cite{isbell}).
The following result of Todorcevic gives a classification of the possible cofinal types of directed posets of size at most ${\aleph}_{1}$ under $\PFA$.
\begin{Theorem}[Todorcevic\cite{cofinal}] \label{thm:five}
Under $\PID + \p > {\omega}_{1}$ there are only 5 Tukey types of size at most ${\aleph}_{1}$: $1$, $\omega$, ${\omega}_{1}$, $\omega \times {\omega}_{1}$, ${\[{\omega}_{1}\]}^{< \omega}$.
\end{Theorem}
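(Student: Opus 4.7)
The plan is to take a directed set $\langle D, \leq \rangle$ with $|D| \leq {\aleph}_{1}$ and identify its cofinal type among the five candidates. First dispose of the trivial cases: if $D$ has a top element then $D \; {\equiv}_{T} \; 1$, and if $\cf(D) = {\aleph}_{0}$ then $D \; {\equiv}_{T} \; \omega$. So we may reduce to the principal case $|D| = \cf(D) = {\aleph}_{1}$. The key tool is the ideal
\begin{equation*}
\I = \{A \subseteq D : A \text{ is countable and bounded in } D\},
\end{equation*}
which is in fact a $\sigma$-ideal on the countable subsets of $D$, because directedness together with iterated upper bounds lets us bound any countable union of bounded countable sets; in particular $\I$ is a P-ideal in the sense of $\PID$.

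Apply $\PID$ to $\I$. In the first alternative there is an uncountable $Z \subseteq D$ with every countable subset of $Z$ bounded in $D$. A transfinite recursion, invoking $\p > {\omega}_{1}$ to bound the growing families of previously chosen bounds, produces an ${\omega}_{1}$-chain $\langle c_\alpha : \alpha < {\omega}_{1} \rangle$ in $D$ that dominates $Z$; if this chain is cofinal in $D$ then $D \; {\equiv}_{T} \; {\omega}_{1}$. In the second alternative $D = \bigcup_{n < \omega} D_n$ with each $D_n$ orthogonal to $\I$, meaning every infinite subset of $D_n$ is unbounded in $D$. For such a $D_n$ the map $f_n : D_n \to {\[{\omega}_{1}\]}^{<\omega}$ defined by $f_n(d) = \{e \in D_n : e \leq d\}$ has all values finite (else $f_n(d)$ would be an infinite bounded subset of $D_n$) and is a Tukey map (since $d \in f_n(d)$, the image of an infinite set has infinite union and is therefore unbounded in ${\[{\omega}_{1}\]}^{<\omega}$), whence $D_n \; {\leq}_{T} \; {\[{\omega}_{1}\]}^{<\omega}$ and consequently $D \; {\leq}_{T} \; \omega \times {\[{\omega}_{1}\]}^{<\omega} \; {\equiv}_{T} \; {\[{\omega}_{1}\]}^{<\omega}$.

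The main obstacle is to glue the two alternatives into a complete classification: neither outcome of $\PID$ yields one of the five types on the nose, since in the first alternative the ${\omega}_{1}$-chain produced from $Z$ need not be cofinal in $D$, and in the second the individual pieces $D_n$ may themselves admit chain-like substructures that force hybrid behaviour. When the chain from Alternative A fails to be cofinal one iterates the dichotomy on the residual directed set $\{d \in D : \forall \alpha < {\omega}_{1}, \; d \not\leq c_\alpha\}$; when pieces $D_n$ from Alternative B mix with residual chains, the compound directed set ends up Tukey equivalent to $\omega \times {\omega}_{1}$. It is precisely in the finer analysis of these hybrid cases that $\p > {\omega}_{1}$ enters in its full strength through ${\MA}_{{\aleph}_{1}}$ for $\sigma$-centered forcings, which manufactures common bounds inside directed subsets of size ${\aleph}_{1}$ and collapses every potentially intermediate Tukey type to one of ${\omega}_{1}$, $\omega \times {\omega}_{1}$, or ${\[{\omega}_{1}\]}^{<\omega}$. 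Rigorously verifying that no new Tukey type escapes the list is the technical heart of the argument.
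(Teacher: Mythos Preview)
Your central claim---that the ideal
\[
\I = \{A \subseteq D : A \text{ countable and bounded in } D\}
\]
is a $\sigma$-ideal, hence automatically a P-ideal---is false. Directedness gives upper bounds only for \emph{finite} subsets, not for countable ones. For $D = {\[{\omega}_{1}\]}^{< \omega}$ take $A_n = \{\{n\}\}$; each $A_n$ is bounded, but any set almost containing every $A_n$ must contain infinitely many singletons $\{n\}$ and hence is unbounded. So your $\I$ is not a P-ideal for this $D$, and $\PID$ cannot be invoked. This breaks the argument at the first step; everything downstream (the transfinite recursion using $\p > {\omega}_{1}$, the handling of ``hybrid'' cases) never gets off the ground.

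The paper's argument (carried out in the proof of Theorem~\ref{thm:5yes}, which refines Theorem~\ref{thm:five}) uses the \emph{dual} ideal
\[
\I' = \{A \in {\[D\]}^{\leq \omega} : \forall x \in D\ \lc A \cap \pred(x)\rc < \omega\},
\]
and the verification that $\I'$ is a P-ideal genuinely requires $\b > {\omega}_{1}$ (a consequence of $\p > {\omega}_{1}$): given $\{A_n\} \subset \I'$ one must dominate the family of functions $x \mapsto \pred(x) \cap A_n$ to find finite $F_n \subset A_n$ with $\bigcup_n (A_n \setminus F_n) \in \I'$. With this correct ideal the $\PID$ alternatives come out reversed from yours: an uncountable $X$ with ${\[X\]}^{\leq \omega} \subset \I'$ has every infinite subset \emph{unbounded}, yielding $D \; {\equiv}_{T} \; {\[{\omega}_{1}\]}^{< \omega}$ directly, while the decomposition $D = \bigcup_n X_n$ with ${\[X_n\]}^{\omega} \cap \I' = 0$ forces (in the easy case) every infinite subset of each $X_n$ to be bounded, giving $D \; {\leq}_{T} \; \omega \times {\omega}_{1}$. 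There is no hybrid case and no need to iterate the dichotomy. The paper does require a second, separate case: when some trace ideal $\tr(D,A)$ on a countable $A \subset D$ is tall, one applies $\PID$ again to an ideal on $\tr(D,A)$ and uses that tall ${F}_{\sigma}$ ideals have large cofinality; this is where the remaining strength of $\p > {\omega}_{1}$ enters, not through ${\MA}_{{\aleph}_{1}}$ for $\sigma$-centered posets as you suggest.
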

Here, the ordering on $\omega \times {\omega}_{1}$ is the product ordering, and ${\[{\omega}_{1}\]}^{< \omega}$ is ordered by inclusion.
In Section \ref{sec:tukey}, we solve General Problem 1 for the statement that $1$, $\omega$, ${\omega}_{1}$, $\omega \times {\omega}_{1}$, and ${\[{\omega}_{1}\]}^{< \omega}$ are the only cofinal types of size at most ${\aleph}_{1}$.
Interestingly, the cardinal invariant that captures this statement turns out not to be one of the commonly occurring ones; rather, it is the minimum of two mutually independent cardinals.
The result in this section answers both Question 24.14 and Question 24.17 of \cite{combdic}, which ask whether the statement that $1$, $\omega$, ${\omega}_{1}$, $\omega \times {\omega}_{1}$, and ${\[{\omega}_{1}\]}^{< \omega}$ are the only cofinal types of size at most ${\aleph}_{1}$ is equivalent over $\ZFC + \PID$ to $\b = {\omega}_{2}$ and to $\p = {\omega}_{2}$ respectively.
The answer to both questions turn out to be ``no''.

In Section \ref{sec:coloring}, we investigate a strong version of the ordinary partition relation on ${\omega}_{1}$, namely the relation ${\omega}_{1} \rightarrow {({\omega}_{1}, \alpha)}^{2}$.
Recall that for an ordinal $\alpha$, ${\omega}_{1} \rightarrow {({\omega}_{1}, \alpha)}^{2}$ means that for any $c: {\[{\omega}_{1}\]}^{2} \rightarrow 2$ \emph{either} there exists $X \in {\[{\omega}_{1}\]}^{{\omega}_{1}}$ such that $c''{\[X\]}^{2} = \{0\}$, \emph{or} there exists $X \subset {\omega}_{1}$ such that $\otp(X) = \alpha$ and $c''{\[X\]}^{2} = \{1\}$.
The Dushnik-Miller theorem says that ${\omega}_{1}~\rightarrow~{({\omega}_{1}, \omega)}^{2}$ (see \cite{DusMiller}), and its strengthening proved by Erd{\H{o}}s and Rado states that ${\omega}_{1}~\rightarrow~{({\omega}_{1}, \omega + 1)}^{2}$ (see \cite{ErdRado}), while the classical coloring of Sierpinski \cite{Sierp} shows that ${\omega}_{1}~\rightarrow~{({\omega}_{1}, {\omega}_{1})}^{2}$ is false.
Thus the following theorem of Todorcevic~\cite{positive} gives the strongest possible version of the ordinary partition relation on ${\omega}_{1}$.
\begin{Theorem}[Todorcevic] \label{omegaone}
$\PID + \p > {\omega}_{1}$ implies that ${\omega}_{1} \rightarrow {({\omega}_{1}, \alpha)}^{2}$, for every $\alpha < {\omega}_{1}$.
\end{Theorem}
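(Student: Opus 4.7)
The plan is to argue by contradiction. Suppose $c : {\[{\omega}_{1}\]}^{2} \to 2$ witnesses failure, with no uncountable $0$-homogeneous subset and no $1$-homogeneous subset of some fixed order type ${\alpha}^{\ast} < {\omega}_{1}$. For each $\beta < {\omega}_{1}$ set ${N}_{\beta} = \{\alpha < \beta : c(\alpha,\beta) = 0\}$ and ${P}_{\beta} = \beta \setminus {N}_{\beta}$; a $1$-homogeneous $X$ is exactly an $X$ with $X \cap \beta \subseteq {P}_{\beta}$ for every $\beta \in X$. The strategy is to apply $\PID$ to an ideal $\I$ of countable subsets of ${\omega}_{1}$ tailored to the ${N}_{\beta}$-structure, and then play the resulting dichotomy against the two hypotheses on $c$.

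The first task is to identify an appropriate P-ideal $\I$ on ${\[{\omega}_{1}\]}^{\leq \omega}$. A natural candidate is to let $A \in \I$ iff $A$ is countable and there are uncountably many $\beta < {\omega}_{1}$ with $A \subseteq^{\ast} {N}_{\beta}$. One then verifies the P-ideal property by a diagonal argument: given countable ${A}_{n} \in \I$ with uncountable witness sets ${Y}_{n}$, thin each ${A}_{n}$ to an ${A}_{n}^{\prime} \subseteq^{\ast} {A}_{n}$ so that $A = \bigcup_{n} {A}_{n}^{\prime}$ still satisfies $A \subseteq^{\ast} {N}_{\beta}$ for uncountably many $\beta$ coming from a diagonal choice through the ${Y}_{n}$'s.

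Applying $\PID$ yields two cases. In case (a), an uncountable $T \subseteq {\omega}_{1}$ has ${\[T\]}^{\leq \omega} \subseteq \I$. Then every countable subset of $T$ is almost contained in uncountably many ${N}_{\beta}$'s, and a transfinite recursion of length ${\omega}_{1}$ that at each stage picks the next element inside the (uncountable) witness set intersected with $T$, modulo the finite exceptional parts, extracts an uncountable $0$-homogeneous subset of $T$, contradicting the first hypothesis on $c$. In case (b), ${\omega}_{1} = \bigcup_{n} {S}_{n}$ with each ${S}_{n}$ orthogonal to $\I$, and we fix an uncountable $S := {S}_{n}$. Orthogonality of $S$ forces every infinite $A \subseteq S$ out of $\I$, which translates into a strong "spread" property of the ${P}_{\beta}$'s relative to $S$: only countably many $\beta$ can contain $A$ almost inside ${N}_{\beta}$. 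Using this spread together with $\p > {\omega}_{1}$, we build a $1$-homogeneous subset of $S$ of type ${\alpha}^{\ast}$ via a $\sigma$-centered poset whose conditions are finite $1$-homogeneous tuples from $S$ equipped with countable side-promises of forbidden tails; meeting ${\alpha}^{\ast}$ many dense sets (one for each ordinal below ${\alpha}^{\ast}$) is possible since $\p > {\omega}_{1} \geq {\alpha}^{\ast}$, and the resulting sequence contradicts the second hypothesis on $c$.

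The main obstacle is the calibration of $\I$: it must simultaneously be a genuine P-ideal, be rich enough that case (a) forces an uncountable $0$-homogeneous set, and have orthogonal pieces strong enough to drive the $\p$-construction in case (b). The most delicate point is verifying the density of "extend by one more element" in the $\sigma$-centered forcing: at each stage this amounts to producing some $\gamma \in S$ above the current finite tuple with $c(\xi,\gamma) = 1$ for every $\xi$ in the tuple, which must be deduced from the orthogonality of $S$ to the carefully chosen $\I$, together with the standing assumption that no uncountable $0$-homogeneous set exists.
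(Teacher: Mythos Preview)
Your overall architecture---apply $\PID$ to a P-ideal built from the coloring and use the two alternatives to produce either an uncountable $0$-homogeneous set or a long $1$-homogeneous chain---matches the shape of Todorcevic's argument. But there are two genuine gaps.

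First, the ideal $\I$ you define need not be closed under finite unions: the witness sets $\{\beta : A \subseteq^{\ast} N_\beta\}$ and $\{\beta : B \subseteq^{\ast} N_\beta\}$ are each uncountable, but their intersection, which is the witness set for $A\cup B$, may well be countable. One repairs this by strengthening ``uncountably many'' to ``on a club'' (or by working relative to a fixed stationary set), after which the P-ideal property becomes a routine consequence of $\b > \omega_1$ rather than the ad hoc diagonalization you sketch.

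Second, and more seriously, your plan for case (b) breaks down as soon as $\alpha^{\ast} \geq \omega+1$. A $\sigma$-centered poset whose working part is a finite $1$-homogeneous tuple cannot manufacture a set of order type $\omega+1$: the element at position $\omega$ must sit above infinitely many earlier elements, none of which are determined by any finite condition, so the ``extend to position $\eta$'' sets are not dense for limit $\eta$. Notice also that there are only $\lc \alpha^{\ast} \rc \leq \aleph_0$ such dense sets, so $\p > \omega_1$ is doing no work at all in your sketch. What orthogonality of $S$ to $\I$ actually buys you is that every infinite $A \subseteq S$ has $\lc A \cap P_\beta \rc = \omega$ for all but countably many $\beta$; this is useful raw material, but it does not by itself supply a single element $1$-connected to a prescribed finite set on both sides.

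In Todorcevic's proof (mirrored in the paper by the Suslin-tree analogue, Theorem~\ref{thm:main}) case (b) is handled instead by an induction on $\beta$ over the indecomposable order types $\omega^{\beta}$: one shows that inside any uncountable $X$ there is a $1$-homogeneous $B$ of type $\omega^{\beta}$ together with an uncountable $Z$ above it satisfying $B \otimes Z \subseteq {K}_{1}$. The inductive step glues an $\omega$-sequence of such pairs of types $\omega^{{c}_{\beta}(n)}$ into one of type $\omega^{\beta}$ using the ultrafilters on indecomposable ordinals constructed in Lemma~\ref{lem:ultrafilter}; that lemma is exactly where $\p > \omega_1$ enters, as the paper remarks explicitly just before its statement. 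The machinery of Definitions~\ref{def:sigma}--\ref{def:sigma2} and Lemmas~\ref{lem:sigma1} and~\ref{lem:sigma} is the tree version of this induction, and the same scaffolding is what is missing from your case (b).
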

\noindent This should be compared with the following result of Todorcevic~\cite{partition} (Chapter 2) that is relevant to General Problem 1.
\begin{Theorem}[Todorcevic]\label{beomegaone}
 $\b = {\omega}_{1}$ implies  ${\omega}_{1} ~\not \rightarrow~{({\omega}_{1}, \omega + 2)}^{2}$
\end{Theorem}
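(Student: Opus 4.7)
The plan is to build, in $\ZFC + \b = {\omega}_{1}$, an explicit coloring $c:{[{\omega}_{1}]}^{2} \to 2$ witnessing the failure of the partition relation; that is, $c$ will admit no uncountable $0$-homogeneous set and no $1$-homogeneous set of order type $\omega + 2$.

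The construction combines a coherent system of labelings, available in $\ZFC$, with a scale-like object supplied by the hypothesis $\b = {\omega}_{1}$. Concretely, fix a coherent sequence $\langle {e}_{\alpha} : \alpha < {\omega}_{1}\rangle$ with each ${e}_{\alpha}:\alpha \to \omega$ one-to-one and with ${e}_{\alpha}$ agreeing with ${e}_{\beta} \restrict \alpha$ modulo finite whenever $\alpha < \beta$ (available from a coherent Aronszajn tree), together with a strictly $<^{*}$-increasing, $<^{*}$-unbounded sequence $\langle {f}_{\alpha} : \alpha < {\omega}_{1}\rangle$ in $\BS$ of strictly increasing functions, which is precisely what $\b = {\omega}_{1}$ provides. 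For each $\alpha < \beta$, set $B(\alpha,\beta) = \{n < \omega : {f}_{\alpha}(n) \geq {f}_{\beta}(n)\}$, a finite subset of $\omega$. The coloring to attempt is $c(\alpha,\beta) = 1$ iff ${e}_{\beta}(\alpha) \in B(\alpha,\beta)$, so that a $1$-edge between $\alpha$ and $\beta$ records that the $\beta$-label of $\alpha$ falls in the finite window where ${f}_{\alpha}$ has not yet been overtaken by ${f}_{\beta}$.

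To verify that no uncountable $X \subseteq {\omega}_{1}$ is $0$-homogeneous, I would use the $<^{*}$-unboundedness of the scale. If $X$ were such a set, then for every $\alpha < \beta$ in $X$ the label ${e}_{\beta}(\alpha)$ would lie outside $B(\alpha,\beta)$, i.e.\ ${f}_{\alpha}({e}_{\beta}(\alpha)) < {f}_{\beta}({e}_{\beta}(\alpha))$. Fixing a countable elementary submodel $M \prec H_{\theta}$ containing $X$, the scale, and the coherent sequence, and choosing any $\beta \in X$ above $\sigma := M \cap {\omega}_{1}$, the finite-to-oneness of ${e}_{\beta}$ then allows one to read off from the single function ${f}_{\beta}$ an absolute pointwise bound on ${f}_{\alpha}$ at cofinitely many coordinates, uniformly in $\alpha \in X \cap M$; this produces a function in $M$ that $\leq^{*}$-dominates $\{{f}_{\alpha} : \alpha \in X \cap M\}$, contradicting $<^{*}$-unboundedness inside $M$.

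The main obstacle is ruling out a $1$-homogeneous set of order type $\omega + 2$. Suppose $\{{\alpha}_{n} : n < \omega\} \cup \{\beta,\gamma\}$ is such a set, with ${\alpha}_{n} \to \sigma \leq \beta < \gamma$. The $1$-coloring assumptions give, for every $n < \omega$,
\begin{equation*}
{e}_{\beta}({\alpha}_{n}) \in B({\alpha}_{n},\beta), \qquad {e}_{\gamma}({\alpha}_{n}) \in B({\alpha}_{n},\gamma), \qquad {e}_{\gamma}(\beta) \in B(\beta,\gamma).
\end{equation*}
The elementary inclusion $B(\alpha,\gamma) \subseteq B(\alpha,\beta) \cup B(\beta,\gamma)$ (valid since $n \in B(\alpha,\gamma)$ forces either ${f}_{\alpha}(n) \geq {f}_{\beta}(n)$ or ${f}_{\beta}(n) > {f}_{\gamma}(n)$), combined with the coherence relation ${e}_{\beta} =^{*} {e}_{\gamma}\restrict\beta$ and the injectivity of each ${e}_{\delta}$, should allow one to locate a single coordinate $k$ at which the three families of constraints force the three values ${f}_{{\alpha}_{n}}(k), {f}_{\beta}(k), {f}_{\gamma}(k)$ to stand in a linear order incompatible with the one imposed by the scale. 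The delicate bookkeeping at this step --- ensuring that the $\omega$-sequence of constraints coming from the ${\alpha}_{n}$'s, rather than just one or two of them, is enough to clash with the single extra constraint ${e}_{\gamma}(\beta) \in B(\beta,\gamma)$ coming from the pair above $\sigma$ --- is the principal technical difficulty and is also the point at which $\omega + 2$, rather than $\omega + 1$, becomes essential: a single ordinal above $\sigma$ would not supply the extra pairing $(\beta,\gamma)$ needed to activate the inclusion and trigger the conflict.
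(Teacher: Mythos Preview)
The paper does not itself prove this theorem; it is quoted from Todorcevic's monograph \cite{partition} (Chapter~2) as background, so there is no in-paper argument to compare against directly.

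On the substance: both verifications in your proposal are left with genuine gaps. For the $0$-homogeneous side, each constraint $f_\alpha(e_\beta(\alpha)) < f_\beta(e_\beta(\alpha))$ controls $f_\alpha$ at a single coordinate, and the bounding value $f_\beta(e_\beta(\alpha))$ lies outside $M$; you have not explained how to convert this into a function in $M$ that $\leq^{\ast}$-dominates $\{f_\alpha : \alpha \in X \cap M\}$. This is not a minor omission: since color $0$ occurs exactly when $e_\beta(\alpha)$ misses the finite set $B(\alpha,\beta)$, it is the \emph{generic} color, and with a coherent $e$-sequence chosen independently of the scale there is no evident obstruction to an uncountable $0$-homogeneous set. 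The successful arguments for this result tie the two ingredients together more tightly (for instance via the oscillation map built from the scale itself) rather than pairing an arbitrary coherent labeling with an arbitrary unbounded family.

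For the $(\omega+2)$-side you yourself flag the argument as the ``principal technical difficulty'' and do not carry it out. The inclusion $B(\alpha,\gamma) \subseteq B(\alpha,\beta) \cup B(\beta,\gamma)$ is correct, but nothing in the sketch explains why it, together with coherence and injectivity of the $e$'s, forces a contradiction: the constraint from the pair $(\beta,\gamma)$ lives at the single fixed coordinate $e_\gamma(\beta)$, while the constraints coming from the $\alpha_n$'s live at coordinates $e_\beta(\alpha_n)$ tending to infinity, and you have supplied no mechanism that makes these collide.
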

In Section \ref{sec:coloring} we solve General Problem 2 for the statement that ${\omega}_{1} \rightarrow {({\omega}_{1}, \alpha)}^{2}$, for every $\alpha < {\omega}_{1}$ by showing that it holds after forcing with the coherent Suslin tree $\Sa$ over a ground model satisfying $\PFA(\Sa)$.
This shows that $\p > {\omega}_{1}$ is not equivalent over $\ZFC + \PID$ to the statement that ${\omega}_{1} \rightarrow {({\omega}_{1}, \alpha)}^{2}$ for every $\alpha < {\omega}_{1}$ so it remains to look for another cardinal invariant of the continuum that would capture this partition relation for ${\omega}_{1}$.
\section{Notation} \label{sec:notation}
We setup some basic notation that will be used throughout the paper.
``$a \subset b$'' means $\forall x\[x \in a \implies x \in b\]$, so the symbol ``$\subset$'' does not denote proper subset.
``$\forallbutfin$'' means for all but finitely many and ``$\existsinf$'' stands for there exists infinitely many.

The following well known cardinal invariants will occur throughout the paper.
For functions $f, g \in \BS$, $f \; {<}^{\ast} \; g$ means $\forallbutfin n \in \omega \[f(n) < g(n)\]$.
A set $F \subset \BS$ is said to be \emph{unbounded} if there is no $g \in \BS$ such that $\forall f \in F \[f \; {<}^{\ast} \; g\]$.
For sets $a$ and $b$, $a \; {\subset}^{\ast} \; b$ iff $a \setminus b$ is finite.
A family $F \subset \cube$ is said to have the \emph{finite intersection property (FIP)} if for any $A \in {\[ F \]}^{< \omega}$, ${\bigcap}{A}$ is infinite.
\begin{Def} \label{def:bandp}
$\c$ denotes ${2}^{\omega}$. Additionally,
\begin{align*}
& \p = \min\left\{\lc F \rc: F \subset \cube \wedge F \ \text{has the FIP} \ \wedge \neg \exists b \in \cube \; \forall a \in F\[b \; {\subset}^{\ast} \; a\]\right\}. \\
& \b = \min\left\{\lc F \rc: F \subset \BS \wedge F \ \text{is unbounded}\right\}.
\end{align*}
$\cov(\M)$ is the least $\kappa$ such that $\R$ can be covered by $\kappa$ many meager sets.
\end{Def}
It is easy to show ${\omega}_{1} \leq \p \leq \b \leq \c$.
Moreover, $\p \leq \cov(\M) \leq \c$, while $\b$ and $\cov(\M)$ are independent.

We will frequently make use of elementary submodels.
We will simply write ``$M \prec H{(\theta)}$'' to mean ``$M$ is an elementary submodel of $H{(\theta)}$, where $\theta$ is a regular cardinal that is large enough for the argument at hand''.

Recall that a Suslin tree is an ${\omega}_{1}$ tree with no uncountable chains or antichains.
Throughout the paper, we work with a fixed Suslin tree $\Sa$
We assume that $\Sa$ is a \emph{coherent strongly homogeneous Suslin tree}.
More precisely, this means that
\begin{enumerate}
 \item
   $\Sa$ is a Suslin tree and is a subtree of ${\omega}^{< {\omega}_{1}}$.
 \item
   for each $s \in \Sa$, $\existsinf n \in \omega \[{s}^{\frown}{\langle n \rangle} \in \Sa\]$ and $\{t \in \Sa: t \geq s\}$ is uncountable.
 \item
   $\forall s, t, \in \Sa \[\lc \xi \in \dom(s) \cap \dom(t): s(\xi) \neq t(\xi) \rc < \omega \]$ (Coherence).
  \item
   For each $\xi < {\omega}_{1}$ and $s, t \in {\Sa}_{\xi}$, there is an automorphism $\phi: \Sa \rightarrow \Sa$ such that $\phi(s) = t$ and
   $\forall \alpha \geq \xi \forall u \in {\Sa}_{\alpha}\[\phi(u) = \phi(u \restrict \xi) \cup u \restrict [\xi, \alpha)\]$ (Strong homogeneity).
\end{enumerate}
Thus fix once and for all a coherent strongly homogeneous Suslin tree $\Sa$.

We will use the following notation when dealing with $\Sa$.
For $t \in \Sa$, $\hgt(t)$ is the height of $t$.
For $t \in \Sa$, $\pred(t)$ denotes \emph{the set of predecessors of $t$}, that is $\{s \in \Sa: s \leq t\}$.
For a set $X$ and $t \in \Sa$, ${\pred}_{X}(t) = \pred(t) \cap X$.
Dually, $\cone(t)$ denotes \emph{the cone above $t$}, for all $t \in \Sa$.
In other words, $\cone(t) = \{u \in \Sa: t \leq u\}$.
Similarly, for a set $X$ and $t \in \Sa$, ${\cone}_{X}(t) = \cone(t) \cap X$.
Next, for $t \in \Sa$, $\succc(t) = \{u \in \Sa: u > t \ \text{and} \ \hgt(u) = \hgt(t) + 1\}$.
Once again, for $t \in \Sa$ and a set $X$, ${\succc}_{X}(t) = \succc(t) \cap X$.
For any non-empty $F \subset \Sa$, $\bigwedge F$ denotes the greatest lower bound  in $\Sa$ of the elements of $F$.

We will be studying colorings of the pairs of comparable elements of $\Sa$.
We setup some basic notation relevant to such colorings here.
For any $A, B \subset \Sa$, $A \otimes B = \{\{a, b\}: a \in A \ \text{and} \ b \in B \ \text{and} \ a < b\}$.
${A}^{\[2\]} = A \otimes A$.
The following variation of ${A}^{\[2\]}$ will also be important in Section \ref{sec:coloring}.
Let $Y \subset \Sa$ and $g: Y \rightarrow \Sa$.
Then ${Y}^{\[2\]}_{g}$ denotes $\{\{a, b\}: a, b \in Y \ \text{and} \ a < b \ \text{and} \ g(a) \leq b\}$.
If $S \subset \Sa$ and $c: {S}^{\[2\]} \rightarrow 2$ is a coloring, then ${K}_{i,c} = \left\{\{s, t\} \in {S}^{\[2\]}: c(\{s, t\}) = i \right\}$, for each $i \in 2$.
We will often omit the subscript ``$c$'' when it is clear from the context.

We will use a $C$-sequence in the proof of Theorem \ref{thm:main}.
Recall that $\langle {c}_{\alpha}: \alpha < {\omega}_{1}\rangle$ is called a \emph{$C$-sequence} if for each $\alpha < {\omega}_{1}$,
\begin{enumerate}
 \item
 ${c}_{\alpha} \subset \alpha$.
 \item
 If $\alpha$ is a limit ordinal, then $\otp({c}_{\alpha}) = \omega$ and $\sup({c}_{\alpha}) = \alpha$.
 \item
 If $\beta = \alpha + 1$, then ${c}_{\beta} = \{\alpha\}$.
\end{enumerate}
Given a $C$-sequence $\langle {c}_{\alpha}: \alpha < {\omega}_{1}\rangle$, it is sometimes useful to think of each ${c}_{\alpha}$ as a function.
For a fixed $0 < \beta < {\omega}_{1}$ and $n \in \omega$, we adopt the following conventions.
If $\beta = \alpha + 1$, then ${c}_{\beta}(n)$ is the unique element of ${c}_{\beta}$, namely $\alpha$.
If $\beta$ is a limit ordinal, then ${c}_{\beta}(n)$ is the $n$th element of ${c}_{\beta}$.

In the proof of Theorem \ref{thm:coloring}, we will use the following notation for subtrees of ${\omega}^{< \omega}$.
Given $T \subset {\omega}^{< \omega}$ which is a subtree, $\[T\] = \{f \in \BS: \forall n \in \omega \[f \restrict n \in T\]\}$.
For $\sigma \in T$, ${\succc}_{T}(\sigma)$ denotes $\{n \in \omega: {\sigma}^{\frown}{\langle n \rangle} \in T\}$.
Note that this departs from the definition of $\succc(s)$ when $s$ is a member of $\Sa$.
However, since it will be clear when we are talking about members of $\Sa$ and when we are referring to elements of some subtree $T$ of ${\omega}^{< \omega}$, we hope that this will not cause any confusion.
\section{Partition relations after forcing with a coherent Suslin tree} \label{sec:coloring}
In this section we investigate partition relations for the pairs of comparable elements of $\Sa$.
Partition relations for the pairs of comparable elements of a Suslin tree were studied by M{\'a}t{\'e}~\cite{matesuslin}, and for more general partial orders by Todorcevic~\cite{posetpartition}.
We prove a positive partition relation for ${\Sa}^{\[2\]}$ assuming $\PFA(\Sa)$ (Theorem \ref{thm:main}).
This result in an analogue of Todorcevic's theorem from \cite{positive} that $\PFA$ implies that ${\omega}_{1} \rightarrow {({\omega}_{1}, \alpha)}^{2}$, for every $\alpha < {\omega}_{1}$.
However, it is not a perfect analogue.
This is explained by Theorem \ref{thm:coloring}, which establishes a negative partition relation for ${\Sa}^{\[2\]}$ in $\ZFC$.
This negative partition relation may be seen as a $\ZFC$ analogue of the relation ${\omega}_{1} \not\rightarrow({\omega}_{1}, \omega+2)^2$ for the pairs of comparable elements of $\Sa$.
It is somewhat surprising that such a result can be proved not going beyond $\ZFC$.
A corollary of the positive partition relation proved in this section is that ${\omega}_{1} \rightarrow {({\omega}_{1}, \alpha)}^{2}$ for every $\alpha < {\omega}_{1}$ after forcing with $\Sa$ over a model of $\PFA(\Sa)$.

In this section, all the trees we deal with will be subsets (though not subtrees) of $\Sa$.
Of course, given $T \in {\[\Sa\]}^{{\omega}_{1}}$, $\langle T, \leq \rangle$ is an ${\omega}_{1}$ tree with no uncountable chains or antichains.
However to avoid some trivialities, we make the following definition
\begin{Def} \label{def:suslin}
 $T \subset \Sa$ is called a \emph{Suslin tree} if $T$ is uncountable and
 \begin{enumerate}
  \item
    $\exists \min(T) \in T \forall x \in T \[\min(T) \leq x\]$.
  \item
    $\forall x \in T \[\{y \in T: y \geq x \} \ \text{is uncountable}\]$.
 \end{enumerate}
Note that we are \emph{not} requiring $T$ to be a normal tree.
In general, $T$ will not be a subtree of $\Sa$.
 \end{Def}
 Obviously, $\forall Y \in {\[\Sa\]}^{{\omega}_{1}} \exists T \in {\[Y\]}^{{\omega}_{1}}\[T \ \text{is a Suslin tree}\]$.
 We will also use the following consequence of a well-known lemma of Todorcevic (see \cite{posetpartition}).
 \begin{Lemma} \label{lem:regressive}
 Let $R \subset \Sa$ be a Suslin tree.
 Suppose $f: R \setminus \{\min(R)\} \rightarrow R$ is a function such that $\forall x \in R \setminus \{\min(R)\}\[f(x) < x\]$.
 Then $\exists U \in {\[R \setminus \{\min(R)\}\]}^{{\omega}_{1}} \exists s \in R \forall x \in U \[f(x) = s\]$.
 \end{Lemma}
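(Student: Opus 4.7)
The plan is to argue by contradiction, iterating $f$ and applying pigeon-hole at each step. The key observation is that $f$ is strictly decreasing in the tree order on $R$, so iterating $f$ from any element must reach $\min(R)$ in finitely many steps, since heights in $R$ are ordinals and cannot decrease infinitely often.

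First I would assume toward contradiction that every fiber $f^{-1}(\{s\})$ is countable. For each $x \in R \setminus \{\min(R)\}$, define iterates by $f^{0}(x) = x$ and $f^{k+1}(x) = f(f^{k}(x))$, continuing while $f^{k}(x) \ne \min(R)$. Since $f^{k+1}(x) < f^{k}(x)$ in the tree order whenever defined, the heights $\hgt(f^{k}(x))$ form a strictly decreasing sequence of ordinals, so the iteration terminates; let $n_{x} \geq 1$ be least with $f^{n_{x}}(x) = \min(R)$. Pigeon-hole on $\omega$ produces $n \geq 1$ such that $A_{n} := \{x : n_{x} = n\}$ is uncountable, and necessarily $n \geq 2$ since otherwise $A_{1} \subseteq f^{-1}(\{\min(R)\})$ would itself be countable.

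Next I would recursively build a descending chain of uncountable sets $A_{n} = A^{(0)} \supseteq A^{(1)} \supseteq \cdots \supseteq A^{(n)}$ together with elements $\min(R) = s_{0} < s_{1} < \cdots < s_{n}$ of $R$ such that $f^{n-k}(x) = s_{k}$ for every $x \in A^{(k)}$. At stage $k < n$, each $x \in A^{(k)}$ has $f^{n-k-1}(x) \ne \min(R)$ (since $n - k - 1 < n = n_{x}$), and consequently $f(f^{n-k-1}(x)) = f^{n-k}(x) = s_{k}$, placing $f^{n-k-1}(x)$ in the countable set $f^{-1}(\{s_{k}\})$. Since $A^{(k)}$ is uncountable, pigeon-hole yields some $s_{k+1} \in f^{-1}(\{s_{k}\})$ for which $A^{(k+1)} := \{x \in A^{(k)} : f^{n-k-1}(x) = s_{k+1}\}$ remains uncountable.

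After $n$ stages, every $x \in A^{(n)}$ satisfies $f^{0}(x) = x = s_{n}$, forcing $A^{(n)} \subseteq \{s_{n}\}$ and contradicting its uncountability. The contradiction yields $s \in R$ with $f^{-1}(\{s\})$ uncountable, and taking $U = f^{-1}(\{s\})$ supplies the required witness. The only delicate point is ensuring that the iteration does not prematurely reach $\min(R)$ before stage $n$, so that each $f^{n-k-1}(x)$ is a genuine $f$-preimage of $s_{k}$ rather than an undefined iterate; this is secured by working throughout inside $A_{n}$, where the termination time $n_{x}$ is exactly $n$, and I do not anticipate any deeper obstacle.
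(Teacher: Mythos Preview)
Your argument is correct. The paper does not supply its own proof of this lemma; it simply records it as ``a consequence of a well-known lemma of Todorcevic'' and cites \cite{posetpartition}. So there is nothing in the paper to compare against, and your rank-and-pigeon-hole argument fills the gap cleanly.

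A couple of remarks. First, the termination step is fine as written: since $R$ inherits its order from $\Sa$, the map $x \mapsto \hgt(x)$ is strictly order-preserving into ${\omega}_{1}$, so $x > f(x) > f^{2}(x) > \dotsb$ forces a finite descent ending at $\min(R)$, exactly as you say. Second, it is worth noticing that your argument never invokes the countable-antichain property of $R$; it uses only that $R$ is uncountable, has a minimum, and is well-founded under the inherited order. Thus the statement holds for any uncountable well-founded partial order with a least element, not just for Suslin subtrees of $\Sa$. The lemma Todorcevic proves in \cite{posetpartition} is typically phrased to exploit the antichain condition in a broader setting (regressive maps on partial orders with the countable chain condition on antichains), and the present lemma is the tree instance; your direct argument sidesteps that machinery entirely, which is a small gain in this context.
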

 Another useful fact about Suslin trees that is easy to verify is the following.
 \begin{Lemma} \label{lem:dense}
  Let $T \subset \Sa$ be a Suslin tree.
  If $X \in {\[T\]}^{{\omega}_{1}}$, then there exists $x \in X$ such that $X$ is dense above $x$ in $T$.
 \end{Lemma}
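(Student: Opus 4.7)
The plan is to decompose the Suslin tree $T$ into a downward-closed ``big'' part $B$, where $X$ extends uncountably above every node, and an upward-closed ``small'' part $T \setminus B$, where only countably many elements of $X$ lie above. Precisely, I would define
\[
B \;=\; \{t \in T : {\cone}_{X}(t) \text{ is uncountable}\}.
\]
Since $s \leq t$ implies ${\cone}_{X}(s) \supseteq {\cone}_{X}(t)$, the set $B$ is downward closed in $T$ and $T \setminus B$ is upward closed. Let $M$ be the set of $\leq$-minimal elements of $T \setminus B$, so that $M$ is an antichain in $T$ and $T \setminus B = \bigcup_{m \in M} {\cone}_{T}(m)$.

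Because $T$ is Suslin, $M$ is countable, and by definition each ${\cone}_{X}(m)$ is countable for $m \in M$. Hence $X \setminus B = \bigcup_{m \in M} {\cone}_{X}(m)$ is a countable union of countable sets, and so $X \cap B$ is uncountable. Next, $T$ is Aronszajn (being Suslin), so every chain in $T$ is countable; in particular ${\pred}_{T}(m)$ is countable for each $m \in M$, and therefore $\bigcup_{m \in M} {\pred}_{T}(m)$ is countable. Consequently we may pick
\[
x \in (X \cap B) \setminus \bigcup_{m \in M} {\pred}_{T}(m).
\]

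I claim that this $x$ witnesses the lemma. First observe that ${\cone}_{T}(x) \subseteq B$: if some $y \geq x$ lay in $T \setminus B$, then $y$ would extend some $m \in M$, forcing $m$ and $x$ to be comparable (both being predecessors of $y$); but $m \leq x$ is impossible because $x \in B$ while $m \in T \setminus B$, and $m > x$ is impossible by the choice of $x$. Therefore every $y \geq x$ satisfies $y \in B$, so ${\cone}_{X}(y)$ is uncountable and in particular nonempty, which is exactly the statement that $X$ is dense above $x$ in $T$.

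There is no real obstacle in this argument: the whole content is the standard structural observation that in a Suslin tree the upward-closed set of ``bad'' nodes is determined by a countable antichain of minimal elements, whose downward closure is also countable, leaving uncountably many ``good'' elements of $X$ to choose from.
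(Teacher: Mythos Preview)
Your argument is correct. The paper does not supply a proof of this lemma; it simply introduces it as ``another useful fact about Suslin trees that is easy to verify,'' so there is no paper proof to compare against, and your decomposition into the downward-closed set $B$ together with the countable antichain of minimal bad nodes is exactly the kind of routine verification the authors had in mind.
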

 We come to the main result that will be established in this section. Our claim that ${\omega}_{1} \rightarrow {({\omega}_{1}, \alpha)}^{2}$ for all $\alpha < {\omega}_{1}$ after forcing with $\Sa$ will follow from this result.
 Theorem \ref{thm:main} gives a positive partition relation for ${\Sa}^{\[2\]}$ under $\PFA(\Sa)$.
 \begin{Theorem} \label{thm:main}
  Assume $\PFA(\Sa)$.
  Let $S \in {\[\Sa\]}^{{\omega}_{1}}$ and $c: {S}^{\[2\]} \rightarrow 2$.
  Then either there exist $Y \in {\[S\]}^{{\omega}_{1}}$ and $g: Y \rightarrow \Sa$ such that $\forall y \in Y \[g(y) \geq y\]$ and ${Y}^{\[2\]}_{g} \subset {K}_{0}$ or for each $\alpha < {\omega}_{1}$, there exists $s \in S$ and $B \subset {\pred}_{S}(s)$ such that $\otp(B) = \alpha$ and ${B}^{\[2\]} \subset {K}_{1}$.
 \end{Theorem}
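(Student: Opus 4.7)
The plan is to negate alternative (b) and apply $\PFA(\Sa)$ to a poset of finite approximations. Suppose (b) fails and fix a countable ordinal ${\alpha}_{0}$ such that no $s \in S$ carries a ${K}_{1}$-homogeneous chain $B \subset {\pred}_{S}(s)$ with $\otp(B) = {\alpha}_{0}$. Let $\P$ consist of finite conditions $p = ({Y}_{p}, {g}_{p})$ where ${Y}_{p} \subset S$ is finite, ${g}_{p} : {Y}_{p} \to \Sa$ satisfies ${g}_{p}(y) \geq y$ for all $y \in {Y}_{p}$, and every pair in ${({Y}_{p})}^{\[2\]}_{{g}_{p}}$ is colored $0$. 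Order $\P$ by $q \leq p$ iff ${Y}_{q} \supset {Y}_{p}$ and ${g}_{q}(y) \geq {g}_{p}(y)$ in $\Sa$ for every $y \in {Y}_{p}$. The sets ${D}_{\alpha} = \{p : \exists y \in {Y}_{p}\ (\hgt(y) \geq \alpha)\}$ for $\alpha < {\omega}_{1}$ are the targets for $\PFA(\Sa)$; once $\P$ is shown to be proper and $\Sa$-preserving, a filter $G$ meeting each ${D}_{\alpha}$ yields an uncountable $Y = \bigcup \{{Y}_{p} : p \in G\}$, and any choice of $g(y)$ in $\Sa$ above the countable chain $\{{g}_{p}(y) : p \in G,\ y \in {Y}_{p}\}$ witnesses alternative (a).

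The heart of the proof is the simultaneous verification of properness and $\Sa$-preservation, which reduces to essentially the same combinatorial statement. Fix a countable $M \prec H(\theta)$ containing the relevant data, a condition $p \in M \cap \P$, and (for $\Sa$-preservation) a node $t \in \Sa$ of height $\delta := M \cap {\omega}_{1}$. Using a $C$-sequence to enumerate cofinally in $\delta$ and the dense sets of $\P$ in $M$, one builds inside $M$ a fusion-style sequence $p = {p}_{0} \geq {p}_{1} \geq \cdots$ meeting every dense set of $\P$ that belongs to $M$, and then amalgamates the limit with a new element ${y}^{\ast} \in S \cap \cone(t)$ of height $\geq \delta$, assigning ${g}_{q}({y}^{\ast}) \geq {y}^{\ast}$ and raising ${g}_{q}(z)$ into the cone above ${y}^{\ast}$ for any $z \in M \cap {Y}_{{p}_{\omega}}$ that would otherwise create a color-$1$ violation.

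The technical core of the amalgamation is the following lemma: for any prescribed $t \in \Sa$ of height $\delta$, one can pick ${y}^{\ast} \in S \cap \cone(t)$ of height $\geq \delta$ so that only finitely many $z \in M \cap {Y}_{{p}_{\omega}}$ with $z < {y}^{\ast}$ and ${g}_{{p}_{\omega}}(z) \leq {y}^{\ast}$ satisfy $c(\{z, {y}^{\ast}\}) = 1$. The proof combines the failure of (b), Lemma \ref{lem:regressive}, and the coherence and strong homogeneity of $\Sa$: were the collection of such $z$'s to have order type at least ${\alpha}_{0}$ for every generic enough ${y}^{\ast}$, then elementarity together with strong homogeneity would let us transport, via a regressive map descending along the $K_1$-relation, a ${K}_{1}$-homogeneous chain of order type ${\alpha}_{0}$ below some $s \in S$ back into $M$, contradicting the choice of ${\alpha}_{0}$. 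The finitely many problematic $z$'s are then handled by raising each ${g}_{q}(z)$ into $\cone({y}^{\ast})$, which is always possible because every node above ${g}_{{p}_{\omega}}(z)$ that lies in $\cone({y}^{\ast})$ is a legal extension of the $g$-value.

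The main obstacle is coordinating the internal fusion through dense sets in $M$ with the requirement that ${y}^{\ast}$ be realized above an \emph{arbitrary} prescribed $t \in \Sa$ of height $\delta$; properness alone would not care about $t$, but $\Sa$-preservation does. Here coherence and strong homogeneity of $\Sa$ are indispensable, since they allow the combinatorial data pertaining to ``good'' amalgamation candidates to be transferred between distinct cones of $\Sa$, so that the bookkeeping carried out inside $M$ does not commit to any particular cone and can be specialized to each candidate $t$ at the end. With properness and $\Sa$-preservation established, a direct application of $\PFA(\Sa)$ to the family $\{{D}_{\alpha} : \alpha < {\omega}_{1}\}$ completes the proof.
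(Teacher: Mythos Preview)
Your proposal has two structural gaps. First, the ``fusion-style sequence $p_0 \geq p_1 \geq \cdots$ meeting every dense set of $\P$ in $M$'' is the template for $\sigma$-closed forcing, not for a poset of finite conditions: the union $Y_{p_\omega}$ is infinite, so neither it nor $Y_{p_\omega} \cup \{y^*\}$ is a condition in your $\P$, and you have not produced an $(M,\P)$-generic extension of $p$. Properness of finite-condition posets of this kind is normally secured by side conditions; the paper's $\P(S,c)$ (Definition~\ref{def:poset}) carries a finite $\in$-chain $\N_p$ of countable elementary submodels, the master condition is obtained by adding $M \cap H(\chi)$ to $\N_p$, and genericity is argued by reflecting an arbitrary extension $q$ back into $M$ (this is where the game $\G(s)$ and the automorphisms $\phi_i$ in the proof of Lemma~\ref{lem:main1} do the work). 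Your poset has no such mechanism. Second, your ``technical core'' lemma does not follow from the failure of (b): even if every candidate $y^*$ admits $\alpha_0$-many $z < y^*$ with $g(z) \leq y^*$ and $c(\{z, y^*\}) = 1$, those $z$'s need not be $K_1$-homogeneous \emph{among themselves}, so no $K_1$-chain of type $\alpha_0$ below any $s \in S$ is produced. The appeal to ``a regressive map descending along the $K_1$-relation'' hand-waves at exactly the step that carries all the weight.

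In the paper the logic runs in the opposite direction. The conditions of $\P(S,c)$ do \emph{not} carry a function $g$; they demand $F_p^{[2]} \subset K_0$ outright. The function $g$ appears only in the dichotomy Lemma~\ref{lem:main1}: either some $\P(X, c \restrict X^{[2]})$ is proper and $\Sa$-preserving (so $\PFA(\Sa)$ yields $Y$ with $Y^{[2]} \subset K_0$, i.e.\ $g$ is the identity), or from the \emph{failure} of properness one extracts---via the game $\G(s)$ and strong homogeneity of $\Sa$---the data $x_0, Y, \langle F_\alpha\rangle, g$ satisfying (1)--(3) of that lemma. Theorem~\ref{thm:main} is then proved by induction on $\beta$ for order types $\omega^\beta$: the output of Lemma~\ref{lem:main1} is fed into the $\sigma$-function apparatus (Definitions~\ref{def:sigma}--\ref{def:sigma2}, Lemmas~\ref{lem:sigma1} and~\ref{lem:sigma}) and the ultrafilters of Lemma~\ref{lem:ultrafilter} to climb from $K_1$-chains of type $\omega^{c_\beta(n)}$ to type $\omega^\beta$. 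A single-shot argument with no side conditions and no induction on the target order type does not replace this machinery.
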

 \begin{Cor} \label{cor:main}
  $\PFA(\Sa)$ implies that the coherent Suslin tree $\Sa$ forces ${\omega}_{1} ~\rightarrow~{({\omega}_{1}, \alpha)}^{2}$ to hold.
 \end{Cor}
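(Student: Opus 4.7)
Suppose for contradiction that there exist a condition $p \in \Sa$, an $\Sa$-name $\dot{c}$ for a coloring $[\omega_{1}]^{2} \to 2$, and an ordinal $\alpha < \omega_{1}$ such that $p$ forces that $\dot{c}$ admits neither an uncountable $0$-homogeneous set nor a $1$-homogeneous set of order type $\alpha$. The plan is to lift $\dot{c}$ to a coloring of $\Sa^{[2]}$ in $V$, apply Theorem \ref{thm:main}, and derive a contradiction from each alternative.

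First I would carry out a preprocessing reduction, replacing $\dot{c}$ (by re-indexing along a cofinal subset of $\omega_{1}$) by an equivalent name that is \emph{tree-local}, in the sense that for every $\gamma < \delta < \omega_{1}$, every node $t \in \Sa_{\delta}$ decides $\dot{c}(\gamma, \delta)$. Tree-locality is obtained using that each sub-name $\dot{c}(\gamma, \delta)$ is decided by a countable antichain (ccc of $\Sa$), combined with a diagonal closure on the heights at which those decisions occur. With this in place, define $C : \Sa^{[2]} \to 2$ in $V$ by $C(\{s, t\}) = i$ iff $t \Vdash \dot{c}(\hgt(s), \hgt(t)) = i$; the key feature is that for any generic $G$ with $t \in G$ one has $\dot{c}_{G}(\hgt(s), \hgt(t)) = C(\{s, t\})$.

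Apply Theorem \ref{thm:main} to $(\cone(q), C)$ for each $q \geq p$. Either (i) the second alternative holds above every such $q$, or (ii) the first alternative holds above some $q \geq p$. In case (i), the set $D = \{s \geq p : \exists B \subset \pred_{\cone(p)}(s), \otp(B) = \alpha, B^{[2]} \subset K_{1}\}$ is dense above $p$, so any generic $G$ through $p$ picks up an $s \in G \cap D$, and the associated chain $B \subset \pred(s) \subset G$ makes $\{\hgt(b) : b \in B\}$ a $1$-homogeneous set of order type $\alpha$ in $V[G]$, contradicting $p$. In case (ii) one obtains $Y \in [\cone(q)]^{\omega_{1}}$ and $g : Y \to \Sa$ with $g(y) \geq y$ and $Y^{[2]}_{g} \subset K_{0}$. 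Because $\Sa$ has no uncountable chains, $g$ is countable-to-one and $g(Y)$ is uncountable; Lemma \ref{lem:dense} supplies $z_{0} \in g(Y)$ above which $g(Y)$ is dense in $\Sa$. Fix a generic $G$ containing both $q$ and $z_{0}$; the standard fact that a generic branch meets a dense subset cofinally gives $|g(Y) \cap G| = \omega_{1}$ in $V[G]$, so $Y^{\ast} := \{a \in Y : g(a) \in G\}$ is uncountable, and since $g(a) \in G$ forces $a \in G$ (via $a \leq g(a)$), $Y^{\ast} \subset G$ is a chain. Enumerating $Y^{\ast}$ in increasing order of height as $\{y_{\xi} : \xi < \omega_{1}\}$ and inductively placing $y_{\xi}$ in $Y^{\ast\ast}$ precisely when $\hgt(y_{\xi}) \geq \hgt(g(y_{\eta}))$ for every $y_{\eta} \in Y^{\ast\ast}$ with $\eta < \xi$ produces an uncountable $Y^{\ast\ast} \subset Y^{\ast}$ on which $g(a) \leq b$ whenever $a < b$, whence $C(\{a, b\}) = 0$, $\dot{c}_{G}(\hgt(a), \hgt(b)) = 0$, and $\{\hgt(y) : y \in Y^{\ast\ast}\}$ is an uncountable $0$-homogeneous set in $V[G]$, contradicting $q$.

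The main obstacle is the preprocessing to obtain tree-locality, since a priori the sub-name $\dot{c}(\gamma, \delta)$ need not be decided at level $\delta$; once this is arranged, the two cases of Theorem \ref{thm:main} deliver the two sides of the partition relation via a clean synthesis with Lemma \ref{lem:dense} and standard facts about generic branches through $\Sa$.
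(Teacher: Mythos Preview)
Your overall strategy matches the paper's, and your handling of the two alternatives of Theorem~\ref{thm:main} is correct (in fact you give more detail in the 0-homogeneous case than the paper does). There is, however, a genuine slip in the preprocessing step.

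You ask for a re-indexed name that is \emph{tree-local} in the sense that every $t \in \Sa_\delta$ decides $\dot c(\gamma,\delta)$ for all $\gamma<\delta$. This cannot be arranged: no matter which increasing $f:\omega_1\to\omega_1$ you pick, the value $\dot c(f(\gamma),f(\delta))$ may only be decided by an antichain lying strictly above level $\delta$ (and even above level $f(\delta)$), because you have to commit to $f(\delta)$ before you know where that antichain sits. The diagonal/ccc closure you invoke does yield a club $E$ with the property that nodes at height $\delta\in E$ decide $\dot c(\beta,\beta')$ for all $\beta<\beta'<\delta$; but this never lets $\hgt(t)$ itself appear as the second coordinate, which is exactly what your definition $C(\{s,t\})=i \Leftrightarrow t\Vdash\dot c(\hgt(s),\hgt(t))=i$ requires.

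The fix is the one the paper uses (and which your ccc argument supports equally well): take $S$ to consist of nodes at levels $\delta_{\xi+1}$ along an increasing continuous sequence $\langle\delta_\xi:\xi<\omega_1\rangle$ with the closure property above, and associate to each $t\in S$ the ordinal $\delta_{\xi_t}<\hgt(t)=\delta_{\xi_t+1}$. Then for $s<t$ in $S$ one has $\delta_{\xi_s}<\delta_{\xi_t}<\delta_{\xi_t+1}=\hgt(t)$, so $t$ really does decide $\dot c(\delta_{\xi_s},\delta_{\xi_t})$, and one sets $C(\{s,t\})$ accordingly. With this shift from $\hgt(t)$ to $\delta_{\xi_t}$ in place, the remainder of your argument goes through unchanged.
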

 \begin{proof}[Proof(assuming Theorem \ref{thm:main}).]
  Let $\mathring{f} \in {\V}^{\Sa}$ be such that $\forces \mathring{f}: {\[{\omega}_{1}\]}^{2} \rightarrow 2$.
  Fix $s \in \Sa$.
  Suppose that $s \forces \neg\exists A \in {\[{\omega}_{1}\]}^{{\omega}_{1}}\[\mathring{f}''{\[A\]}^{2} = \{0\}\]$.
  We will show that $s \forces \forall \alpha < {\omega}_{1} \exists A \subset {\omega}_{1}\[\otp(A) = \alpha \wedge \mathring{f}''{\[A\]}^{2} = \{1\}\]$.
  Fix $\alpha < {\omega}_{1}$ and $t \geq s$.
  Let $\theta$ be a sufficiently large regular cardinal and let $\langle {M}_{\xi}: \xi < {\omega}_{1}\rangle$ be an increasing continuous $\in$-chain of countable elementary submodels of $H(\theta)$, with ${M}_{0}$ containing all the relevant objects.
  For each $\xi < {\omega}_{1}$, put ${\delta}_{\xi} = {M}_{\xi} \cap {\omega}_{1}$.
  It is clear that for any $\xi < {\omega}_{1}$, $\beta < \gamma < {\delta}_{\xi}$, and $x \in \Sa$, if $\hgt(x) \geq {\delta}_{\xi}$, then there exists $i \in 2$ such that $x \forces \mathring{f}(\{\beta, \gamma\}) = i$.
  Put $S = \{x \in {\cone}_{\Sa}(t): \exists \xi < {\omega}_{1}\[\hgt(x) = {\delta}_{\xi + 1}\]\}$.
  For $x \in S$, let ${\xi}_{x} < {\omega}_{1}$ be such that $\hgt(x) = {\delta}_{{\xi}_{x} + 1}$.
  Define $c: {S}^{[2]} \rightarrow 2$ as follows.
  Given $x, y \in S$ with $x < y$, let $c(\{x, y\}) \in 2$ be such that $y \forces \mathring{f}(\{{\delta}_{{\xi}_{x}}, {\delta}_{{\xi}_{y}}\}) = c(\{x, y\})$.
  First suppose that there are $Y \in {\[S\]}^{{\omega}_{1}}$ and $g: Y \rightarrow \Sa$ with $\forall y \in Y \[g(y) \geq y\]$ and ${Y}^{\[2\]}_{g} \subset {K}_{0}$.
  Choose $x \in Y$ such that $Y$ is dense above $x$ in $\Sa$.
  If $G$ is a $(\V, \Sa)$-generic filter with $x \in  G$, then in $\V\[G\]$, it is possible to find an uncountable $Z \subset G \cap Y$ such that $\forall y, z \in Z\[y < z \implies g(y) \leq z \]$.
  Now, $\{{\delta}_{{\xi}_{y}}: y \in Z\}$ is an uncountable 0-homogeneous set for $\mathring{f}\[G\]$.
  As $s \in G$, this contradicts the hypothesis on $s$.
  So by Theorem \ref{thm:main}, there is $x \in S$ and $B \subset {\pred}_{S}(x)$ such that $\otp(B) = \alpha$ and ${B}^{\[2\]} \subset {K}_{1}$.
  Then letting $A = \{{\delta}_{{\xi}_{b}}: b \in B\}$, $\otp(A) = \alpha$ and $x \forces \mathring{f}''{\[A\]}^{2} = \{1\}$.
  As $x\geq t$, this completes the proof.
 \end{proof}
 \begin{Def} \label{def:poset}
 Let $\chi$ be a sufficiently large regular cardinal.
 Let $S \in {\[\Sa\]}^{{\omega}_{1}}$ and $c: {S}^{\[2\]} \rightarrow 2$.
 Define a poset $\P(S, c)$ as follows.
 A condition in $\P(S, c)$ is a pair $p = \langle {F}_{p}, {\N}_{p} \rangle$ such that
  \begin{enumerate}
   \item
    ${F}_{p} \in {\[S\]}^{< \omega}$ such that ${{F}_{p}}^{\[2\]} \subset {K}_{0}$.
   \item
    ${\N}_{p}$ is a finite $\in$-chain of countable elementary submodels of $H(\chi)$ that contain all the relevant objects.
   \item
    $\forall s, t \in {F}_{p}\exists M \in {\N}_{p}\[\lc M  \cap \{s, t\} \rc = 1\]$.
    \item
      $\exists M \in {\N}_{p}\[M \cap {F}_{p} = 0\]$.
  \end{enumerate}
  For $p, q \in \P(S, c)$, $q \leq p$ iff ${F}_{q} \supset {F}_{p}$ and ${\N}_{q} \supset {\N}_{p}$.
 \end{Def}
 \begin{Lemma} \label{lem:density}
 Let $S \in {\[\Sa\]}^{{\omega}_{1}}$ and $c: {S}^{\[2\]} \rightarrow 2$.
 For each $\alpha < {\omega}_{1}$, put ${D}_{\alpha} = \{q \in \P(S, c): \exists t \in {F}_{q} \[\hgt(t) > \alpha\]\}$.
 ${D}_{\alpha}$ is a dense subset of $\P(S, c)$.
 \end{Lemma}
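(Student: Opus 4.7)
The plan is to extend $p = \langle F_p, \N_p \rangle$ by adjoining one new node $t \in S$ of height exceeding $\alpha$ to $F_p$, together with a single new countable elementary submodel $N$ placed on top of the $\in$-chain $\N_p$. To begin, fix a countable $N \prec H(\chi)$ containing all the relevant parameters, in particular $p$, $S$, $c$, $\alpha$, and $\N_p$; because $\N_p$ is a finite member of $N$, all of its elements belong to $N$, so $\N_q := \N_p \cup \{N\}$ is again a finite $\in$-chain of countable elementary submodels of $H(\chi)$ containing all relevant objects. Setting $\delta := N \cap \omega_1$, we have $\delta > \alpha$ since $\alpha \in N$.

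The heart of the argument is to locate $t \in S$ with $\hgt(t) \geq \delta$ such that $(F_p \cup \{t\})^{[2]} \subset K_0$. Every $s \in F_p$ lies in $N$ and so has $\hgt(s) < \delta$; hence the only comparable pairs newly created in $F_p \cup \{t\}$ are pairs $\{s, t\}$ with $s \in F_p$ and $s < t$, and the set $\{s \in F_p : s < t\}$ is automatically a chain in $F_p$, being contained in $\pred(t)$. Partitioning the uncountable set $\{t \in S : \hgt(t) \geq \delta\}$ according to which chain $C \subseteq F_p$ lies below $t$ and according to the color pattern $\sigma : C \to 2$ defined by $s \mapsto c(\{s, t\})$ yields finitely many classes, at least one of which is uncountable. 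One then seeks a $t$ realizing a class with $\sigma \equiv 0$; the cleanest route is the case $C = \emptyset$, i.e., $t$ incomparable with every member of $F_p$, whose availability in $S$ at height $\geq \delta$ follows by a counting argument in $\Sa$ using the uncountability of $S$ and the fact that every node of $\Sa$ has infinitely many immediate successors with uncountable cones, followed by reflection through $N$.

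Once $t$ is fixed, set $F_q := F_p \cup \{t\}$ and $\N_q := \N_p \cup \{N\}$ and verify the four conditions of Definition \ref{def:poset}. Condition (1) holds by the choice of $t$; condition (2) by construction; condition (3) for each new pair $\{s, t\}$ with $s \in F_p$ is witnessed by $N$ itself, since $s \in F_p \subset N$ whereas $t \notin N$ because $\hgt(t) \geq \delta = N \cap \omega_1$; and condition (4) is inherited from $p$ via the same witnessing model $M \in \N_p$, since $t \notin M$ because $\delta_M < \delta \leq \hgt(t)$. Then $q \leq p$, $t \in F_q$, and $\hgt(t) > \alpha$, so $q \in D_\alpha$. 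The principal obstacle throughout is producing $t$ with the required color behavior: the pigeonhole step is immediate, but arranging $\sigma \equiv 0$, in particular ensuring the availability of an incomparable candidate when one is needed, is the delicate point and relies on the coherent Suslin-tree structure of $\Sa$ together with the elementarity of $N$.
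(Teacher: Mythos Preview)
Your overall plan matches the paper's: adjoin a single new node to $F_p$ that is incomparable with every element of $F_p$, together with one new model at the top of the chain. Once you commit to an incomparable node there are no new comparable pairs at all, so $(F_q)^{[2]} = (F_p)^{[2]}$ and colour considerations disappear entirely; your pigeonhole discussion of colour patterns $\sigma$ is a detour.

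The substantive content of the lemma is the existence of $v \in S$ with $\hgt(v)$ large and $v$ incomparable with all of $F_p$, and here your justification is not adequate. You invoke ``a counting argument in $\Sa$'', infinite branching, ``reflection through $N$'', and coherence, but none of these is the right tool. Infinite branching of $\Sa$ says nothing about the arbitrary uncountable set $S$: nothing in general prevents $S$ from lying, up to a countable set, inside $\bigcup_{s \in F}\cone(s)$ for some finite $F$. What rules this out for $F = F_p$ is condition~(4) of Definition~\ref{def:poset}, which supplies a model $M_0 \in \N_p$ with $S \in M_0$ but $M_0 \cap F_p = \emptyset$; your model $N$, which \emph{contains} $F_p$, is facing the wrong way for this. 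The paper argues via $M_0$: fix a Suslin subtree $T \subset S$ with $T \in M_0$; if no such $v$ existed then every $u \in T \cap M_0$ would lie below some element of $F_p$ (they are comparable via a common extension in $T$, and $u \leq s$ rather than $s < u$ because $s \notin M_0$ while $\pred(u) \subset M_0$). By elementarity of $M_0$ every countable $A \subset T$ is then covered by finitely many chains $\pred(s)$ with $s \in S$, which is impossible since $T$, being uncountable inside a Suslin tree, contains an infinite antichain. Coherence is nowhere used.

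There is also a minor structural issue. You fix $N$ first and then choose $t$ with $\hgt(t) \geq N \cap \omega_1$, so $t$ lies in no model of $\N_q = \N_p \cup \{N\}$. Read literally, clause~(3) of Definition~\ref{def:poset} includes the case $s = t$ and hence requires every element of $F_q$ to belong to some model of $\N_q$; your $q$ then fails~(3) at the new node. (The paper's proof relies on this reading as well, since it separates each $s \in F_p$ from the new node via an old model containing $s$.) The fix is to proceed in the paper's order: first find $v$ of height exceeding $\max\{\alpha, M_l \cap \omega_1\}$ using the $M_0$-argument above, and only then choose a new top model $M_{l+1}$ containing both $v$ and $p$.
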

 \begin{proof}
  Fix $\alpha < {\omega}_{1}$ and $p \in \P(S, c)$.
  Let $\{{M}_{0}, \dotsc, {M}_{l}\}$ enumerate ${\N}_{p}$ in $\in$-increasing order.
  By (4) of Definition \ref{def:poset}, ${M}_{0} \cap {F}_{p} = 0$.
  Put $\delta = \max\{\alpha, {M}_{l} \cap {\omega}_{1}\}$.
  We will find $v \in S$ such that $\hgt(v) > \delta$ and $\neg\exists s \in {F}_{p}\[s \leq v\]$.
  Suppose that this is not possible.
  Fix $T \in {M}_{0} \cap {\[S\]}^{{\omega}_{1}}$ such that $\forall u \in T \[\{v \in T: v \geq u\} \ \text{is uncountable}\]$.
  Suppose $u \in T \cap {M}_{0}$.
  There is a $v \in T$ so that $\hgt(v) > \delta$ and $v \geq u$.
  By the assumption there exists $s \in {F}_{p}$ such that $s \leq v$.
  It follows that $u \leq s$.
  Thus $\forall u \in T \cap {M}_{0}\exists s \in {F}_{p}\[u \leq s\]$.
  It follows that $T$ is an uncountable subset of $\Sa$ with the property that $\forall A \in {\[T\]}^{\leq \omega} \exists F \in {\[S\]}^{< \omega}\forall u \in A \exists s \in F\[u \leq s\]$.
  However it is impossible to have such an uncountable subset of a Suslin tree.

  Now, fix $v \in S$ such that $\hgt(v) > \delta$ and $\neg\exists s \in {F}_{p}\[s \leq v\]$.
  Put ${F}_{q} = {F}_{p} \cup \{v\}$.
  Find a countable ${M}_{l + 1} \prec H(\chi)$ with $v, p \in {M}_{l + 1}$.
  Put ${\N}_{q} = {\N}_{p} \cup \{{M}_{l + 1}\}$.
  Then $q \leq p$ and $q \in {D}_{\alpha}$.
 \end{proof}
 \begin{Lemma} \label{lem:submain}
  Let $\chi$ be the cardinal fixed in Definition \ref{def:poset}.
  Fix $M \prec H(\chi)$ countable with $\Sa \in M$.
  Let $D \subset \Sa$ with $D \in M$.
  Let $L \subset \Sa$ be such that $\{\hgt(x): x \in L \cap M \}$ is unbounded in $M \cap {\omega}_{1}$.
  Suppose that $\forall x \in L \cap M \exists y \in D \[x \leq y\]$.
  Then there exists $x \in L \cap M$ such that $D$ is dense above $x$ in $\Sa$.
  Moreover, if there exists $s \in \Sa$ such that $L = {\pred}_{\Sa}(s)$, then $\{\hgt(x): x \in L \cap M \cap D\}$ is unbounded in $M \cap {\omega}_{1}$.
 \end{Lemma}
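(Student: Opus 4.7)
The plan is to handle both assertions by minimal-antichain arguments inside $M$, keeping the antichains countable via Suslinity of $\Sa$ and inside $M$ via elementarity. Set $\delta := M \cap \omega_1$. For the main assertion I argue by contradiction: suppose no $x \in L \cap M$ has $D$ dense above it, and consider the upward-closed set $B := \{u \in \Sa : \cone(u) \cap D = \emptyset\}$, which belongs to $M$. Let $U$ denote the set of $\leq$-minimal elements of $B$. Every member of $B$ has a $B$-minimum among its predecessors (the chain $\{z \in B : z \leq y\}$ is well-ordered), so $U$ is an antichain, countable by Suslinity, and therefore contained in $M$ since $U \in M$. Consequently $\gamma := \sup\{\hgt(u) : u \in U\}$ belongs to $M \cap \omega_1 = \delta$, so $\gamma < \delta$. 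Pick any $x \in L \cap M$ of height above $\gamma$; our assumption produces $y \geq x$ in $B$, and $y \geq u$ for some $u \in U$. Then $u$ and $x$ both lie under $y$, hence are comparable; $u \leq x$ is ruled out by the hypothesis that $x$ has an extension in $D$ (which together with upward-closure of $B$ would force $x \in B$), so $x < u$, forcing $\hgt(u) > \gamma$ and contradicting the definition of $\gamma$.

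For the moreover part, apply the main conclusion to get $x_0 \in L \cap M$ above which $D$ is dense. Given $\alpha < \delta$, use that $L \cap M$ is a chain with heights unbounded in $\delta$ to replace $x_0$ by some $x_0' \in L \cap M$ with $x_0' \geq x_0$ and $\hgt(x_0') > \alpha$; density propagates upward in the tree, so $D$ is still dense above $x_0'$. Inside $M$ form the antichain $E$ of $\leq$-minimal elements of $D \cap \cone(x_0')$; by Suslinity and elementarity, $E$ is countable and $E \subseteq M$. A density argument shows every $u \in \cone(x_0')$ is comparable with some $e \in E$: density supplies $w \in D \cap \cone(x_0')$ with $w \geq u$, and the minimum of the well-ordered chain $\{z \in D \cap \cone(x_0') : z \leq w\}$ belongs to $E$ and is comparable to $u$. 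Apply this with $u = s$, which lies in $\cone(x_0')$ because $x_0' \leq s$: some $e \in E$ is comparable to $s$. Since $e \in M$ forces $\hgt(e) < \delta$ while $\hgt(s) \geq \delta$ (as $L \cap M \subseteq \pred_\Sa(s)$ has heights unbounded in $\delta$), comparability gives $e < s$, so $e \in L \cap M \cap D$ with $\hgt(e) \geq \hgt(x_0') > \alpha$, establishing the required unboundedness.

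The main technical point is the main part: one must identify the correct upward-closed ``bad'' set whose minimal antichain is countable, sits inside $M$, and has heights bounded strictly below $\delta$. After that, the moreover assertion is a parallel minimal-``good''-element argument in which the crucial extra ingredient is the height gap between any element of $M$ (necessarily at a level below $\delta$) and $s$ (necessarily at a level at least $\delta$). Neither coherence nor strong homogeneity of $\Sa$ enters this proof; only Suslinity is used.
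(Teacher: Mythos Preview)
Your proof is correct and follows essentially the same line as the paper's: both parts hinge on taking, inside $M$, a countable antichain in the relevant ``bad'' (or ``good'') set, bounding its heights below $\delta$, and then picking an element of $L\cap M$ above that bound. The only cosmetic differences are that the paper uses a maximal antichain in $E=\{x:\cone_D(x)=\emptyset\}$ while you use the set of $\leq$-minimal elements of the same set, and in the ``moreover'' clause the paper folds the height restriction $>\alpha$ into the set before taking the antichain whereas you first raise $x_0$ to $x_0'$ of height $>\alpha$; neither change affects the argument.
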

 \begin{proof}
  Put $\delta = M \cap {\omega}_{1}$.
  Put $E = \{x \in \Sa: {\cone}_{D}(x) = 0\}$.
  $E \in M$.
  So there exists $A \in M$ such that $A \subset E$, $A$ is an antichain, and $A$ is maximal with respect to these two properties.
  As $A$ is countable, find $\alpha < \delta$ such that $A \subset {\Sa}_{< \alpha}$.
  Let $x \in L \cap M$ be such that $\hgt(x) \geq \alpha$.
  If $D$ is not dense above $x$ in $\Sa$, then there is $s \in \Sa$ such that $s \geq x$ and ${\cone}_{D}(s) = 0$.
  Thus $s \in E$ and is comparable to some $a \in A$.
  It follows that $a \leq x$.
  However, by hypothesis, there is $y \in D$ with $x \leq y$.
  $y \in {\cone}_{D}(a)$, contradicting $a \in E$.

  For the second statement assume that $L = {\pred}_{\Sa}(s)$ for some $s \in \Sa$, and fix $\alpha < M \cap {\omega}_{1}$.
  By the first statement, fix $x \in L \cap M$ such that $D$ is dense above $x$ in $\Sa$.
  Note that ${\cone}_{D}(x) \in M$ and that it is an uncountable set.
  Put $B = \{y \in {\cone}_{D}(x): \hgt(y) > \alpha \} \in M$.
  Choose $A \in M$ such that $A \subset B$, $A$ is an antichain, and $A$ is maximal with respect to these two properties.
  As $A$ is countable, fix $\beta < M \cap {\omega}_{1}$ such that $A \subset {\Sa}_{< \beta}$.
  Fix $t \in L \cap M$ with $\hgt(t) > \max\{\alpha, \beta, \hgt(x)\}$.
  Thus $t \geq x$ and there is $y \in D$ with $y \geq t$.
  Since $y \in B$, there is $a \in A$ such that $a \leq y$.
  It follows that $a \leq t \leq s$.
  Therefore, $a \in L \cap D \cap M$ and $\hgt(a) > \alpha$.
 \end{proof}
 \begin{Lemma} \label{lem:main1}
  Let $T \subset \Sa$ be a Suslin tree.
  Suppose $c: {T}^{\[2\]} \rightarrow 2$.
  Either there exists $X \in {\[T\]}^{{\omega}_{1}}$ such that $\P\left(X, c \restrict {X}^{\[2\]}\right)$ is proper and preserves $\Sa$ or for each $X \in {\[T\]}^{{\omega}_{1}}$ there exists ${x}_{0} \in X$, $Y \in {\[X\]}^{{\omega}_{1}}$, a sequence $\langle {F}_{\alpha}: \alpha < {\omega}_{1}\rangle$, and a function $g: Y \rightarrow \Sa$ such that
  \begin{enumerate}
   \item
    For each $\alpha < {\omega}_{1}$, ${F}_{\alpha}$ is a non-empty finite subset of $X$ such that $\hgt\left(\bigwedge {F}_{\alpha}\right) > \alpha$ (keep in mind that $\bigwedge {F}_{\alpha}$ may not be in $T$).
   \item
    $Y$ and $\{\bigwedge {F}_{\alpha}: \alpha < {\omega}_{1}\}$ are both dense above ${x}_{0}$ in $\Sa$.
   \item
    $\forall x \in Y\[g(x) \geq x\]$ and for each $\alpha < {\omega}_{1}$ and $s \in {\pred}_{Y}\left(\bigwedge {F}_{\alpha}\right) \cap {\Sa}_{< \alpha}$, if $g(s) \leq \bigwedge {F}_{\alpha}$, then $\exists t \in {F}_{\alpha}\[c(\{s, t\}) = 1\]$.
  \end{enumerate}
 \end{Lemma}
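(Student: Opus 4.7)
The plan is to argue the contrapositive. Fix $X \in \[T\]^{{\omega}_{1}}$ and suppose no quadruple $(x_0, Y, \langle F_\alpha : \alpha < {\omega}_{1} \rangle, g)$ satisfies clauses (1)--(3). Writing $\P := \P(X, c \restrict X^{\[2\]})$, the goal becomes to show that $\P$ is proper and preserves $\Sa$. The strategy is to verify, for every countable $M \prec H(\chi)$ containing the relevant parameters, every $p \in \P \cap M$, and every $\sigma \in \Sa$ with $\hgt(\sigma) = \delta := M \cap {\omega}_{1}$, that there is an extension $q \leq p$ with $M \in \N_q$ that is $(M, \P)$-generic and also leaves $\sigma$ compatible with every $\P$-name in $M$ for a maximal antichain of $\Sa$. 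Since $\Sa$ is ccc, this unified condition is the standard sufficient criterion for both properness and $\Sa$-preservation.

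The natural attempt is to enlarge $p$ by a single high-up node: pick $v \in X$ with $v \geq \sigma$ such that $\{s, v\} \in K_0$ for every $s \in F_p$, and set $q_v := \langle F_p \cup \{v\}, \N_p \cup \{M\} \rangle$. The strong homogeneity and coherence of $\Sa$, together with Lemma \ref{lem:density}, provide many such $v$. The $(M, \P)$-genericity of $q_v$ reduces to: for every dense $D \in M$ below $p$, there is $r \in D \cap M$ with $F_r \supset F_p$ satisfying $c(\{s, v\}) = 0$ for every $s \in F_r \setminus F_p$. If this succeeds simultaneously for all dense sets in $M$ for a dense collection of $v$'s above $\sigma$, we are finished. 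Otherwise, for cofinally many $v \geq \sigma$ there is some dense $D \in M$ and a finite $F = F_r \setminus F_p \subset X \cap M$ witnessing failure: $F \cup \{v\}$ cannot be amalgamated into a condition.

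From these local obstructions I plan to extract the global witnesses for alternative (B). Varying $M$ along a club of countable elementary submodels and reflecting via Lemma \ref{lem:submain}, one obtains, for each $\alpha < {\omega}_{1}$, a finite set $F_\alpha \subset X$ with $F_\alpha^{\[2\]} \subset K_0$ and $\hgt(\bigwedge F_\alpha) > \alpha$, playing the role of a red barrier. The function $g : Y \to \Sa$ should encode, for $s \in Y$, the minimal extension of $s$ needed before $s$ first triggers the red-edge obstruction against a future barrier; coherence of the construction across $\alpha$ is arranged so that clause (3) holds uniformly. Lemma \ref{lem:regressive} applied to a natural regressive function on the Suslin tree $T$ pins down a common root $x_0 \in X$, and Lemma \ref{lem:dense} arranges that both $Y$ and $\{\bigwedge F_\alpha : \alpha < {\omega}_{1}\}$ are dense above $x_0$ in $\Sa$. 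The main obstacle is simultaneity: one must produce a single $Y$, a single $g$, and a single sequence $\langle F_\alpha \rangle$ that work together rather than separate witnesses for each $\alpha$; achieving this coherence is where the strong homogeneity of $\Sa$, which transports obstructions across different choices of $\sigma \in \Sa_\delta$, and the joint properness-plus-$\Sa$-preservation framework become essential.
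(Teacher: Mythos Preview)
Your outline has the right shape at the coarse level---work with a fixed $X$, assume the properness-plus-$\Sa$-preservation criterion fails for some $M$, and extract the quadruple---but the two concrete mechanisms you propose do not match what is actually needed, and the gap is real.

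First, your ``natural attempt'' is to adjoin a \emph{single} node $v \geq \sigma$ to $F_p$ and test genericity of $q_v$. This is too restrictive. The correct master condition is simply $p = \langle F_{p_0}, \N_{p_0} \cup \{M \cap H(\chi)\}\rangle$; one then takes an arbitrary $\langle q, t\rangle \leq \langle p, t_0\rangle$ in a dense $D$ and tries to reflect it to $D \cap M$. The part $F = F_q \setminus M$ is a finite set, not a singleton, and its elements need not lie above $t_0$: the paper splits $F = \{z_0,\dots,z_m\}$ into $I_0$ (those $z_i$ essentially comparable to $t$) and $I_1$ (those that are not), and uses the coherence automorphisms $\phi_i$ to align each $z_i$ with the branch of $t$. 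Your single-$v$-above-$\sigma$ picture corresponds only to the case $|F|=1$ and $I_1 = \emptyset$, and your reduction of genericity (``$c(\{s,v\})=0$ for every $s \in F_r \setminus F_p$'') ignores both the model side-conditions and the $\Sa$-coordinate of the product.

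Second, and more seriously, you correctly flag ``simultaneity'' as the obstacle but your proposed cure---varying $M$ over a club and applying Lemma~\ref{lem:regressive}---does not deliver it. Varying $M$ would give, for each $\alpha$, an $\alpha$-dependent $Y_\alpha$ and $g_\alpha$; there is no mechanism in your sketch to glue these into one $Y$ and one $g$. The paper solves this with a device you do not mention: a finite two-player game $\G(s)$ of length $m+2$ whose moves climb the branch of $t$. Player~II's winning strategy yields a uniform set $Y = \{\phi_i(x_i): x_i \in E\}$ of ``game positions'' and a function $g$ (sending $x$ to the image of II's second coordinate) defined \emph{once}, inside the single model $M$. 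The existence of $F_\alpha$ for \emph{every} $\alpha$ then follows by elementarity (Claim~\ref{claim:2}): if it failed for some $\alpha$, it would fail for some $\alpha \in M$, and then the actual $F = \{z_j: y_i \leq z_j\}$ from outside $M$ provides the contradiction. No club of $M$'s and no pressing-down is involved in this part of the argument.
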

\begin{proof}
 Fix a sufficiently large regular $\theta > \chi$.
 Suppose $X \in {\[T\]}^{{\omega}_{1}}$.
 For ease of notation, write ${\P}_{X}$ for $\P(X, c \restrict {X}^{\[2\]})$.
 If for any countable $M \prec H(\theta)$ containing all the relevant objects and any ${p}_{0} \in M \cap {\P}_{X}$, there exists $p \leq {p}_{0}$ such that $\forall {t}_{0} \in {\Sa}_{M \cap {\omega}_{1}}\[\langle p, {t}_{0}\rangle \ \text{is} \ \left( M, {\P}_{X} \times \Sa \right) \ \text{generic}\]$, then ${\P}_{X}$ is proper and preserves $\Sa$.
 Assume that this fails and fix $M \prec H(\theta)$ and ${p}_{0} \in M  \cap {\P}_{X}$ witnessing this.
 Put $\delta = M \cap {\omega}_{1}$.
 Put ${F}_{p} = {F}_{{p}_{0}}$ and ${\N}_{p} = {\N}_{{p}_{0}} \cup \{M \cap H(\chi)\}$.
 Then $p = \langle {F}_{p}, {\N}_{p} \rangle \in {\P}_{X}$ and extends ${p}_{0}$.
 Let ${t}_{0} \in {\Sa}_{\delta}$ and let $D \in M$ be a dense open subset of ${\P}_{X} \times \Sa$ such that $D \cap M$ is not predense below $\langle p, {t}_{0}\rangle$.
 Fix $\langle q, t \rangle \leq \langle p, {t}_{0}\rangle$ which is incompatible with every element of $D \cap M$.
 By extending it if necessary we may assume that $\langle q, t \rangle \in D$, that $\forall N \in {\N}_{q}\[\hgt(t) > N \cap {\omega}_{1}\]$, and that $\exists N \in M \cap {\N}_{q}\[M \cap {F}_{q} \subset N\]$.
 Put ${F}_{{q}_{0}} = {F}_{q} \cap M$ and ${\N}_{{q}_{0}} = {\N}_{q} \cap M$.
 It is clear that ${q}_{0} = \langle {F}_{{q}_{0}}, {\N}_{{q}_{0}} \rangle \in {\P}_{X}$ and that ${q}_{0} \in M$.

 Let $\{{N}^{\ast}_{0}, \dotsc, {N}^{\ast}_{k} \}$ enumerate ${\N}_{q} \setminus {\N}_{{q}_{0}}$ in $\in$-increasing order.
 Let $F = {F}_{q} \setminus {F}_{{q}_{0}}$.
 If $F = 0$, then consider ${D}^{\ast}$, collection of all ${t}^{\ast} \in \Sa$ for which there exists $\{{N}_{0}, \dotsc, {N}_{k}\}$ such that
 \begin{enumerate}
  \item[(4)]
    $\{{N}_{0}, \dotsc, {N}_{k}\}$ is an $\in$-chain of countable elementary submodels of $H(\chi)$ containing the relevant objects, and containing ${q}_{0}$.
  \item[(5)]
   $\langle \langle {F}_{{q}_{0}}, {\N}_{{q}_{0}} \cup \{{N}_{0}, \dotsc, {N}_{k}\}\rangle, {t}^{\ast} \rangle \in D$.
 \end{enumerate}
 ${D}^{\ast} \in M$ and it follows from Lemma \ref{lem:submain} (applied to $M \cap H(\chi)$) that there exists ${t}^{\ast} \in {D}^{\ast} \cap M \cap {\pred}_{\Sa}(t)$.
 If $\{{N}_{0}, \dotsc, {N}_{k}\} \in M$ witnesses (4) and (5) for ${t}^{\ast}$, then $\langle \langle {F}_{{q}_{0}}, {\N}_{{q}_{0}} \cup \{{N}_{0}, \dotsc, {N}_{k}\} \rangle, {t}^{\ast} \rangle \in D \cap M$ and is compatible with $\langle q, t\rangle$.
 As this contradicts the choice of $\langle q, t \rangle$, we may assume that $F \neq 0$.
 Let $\{{z}_{0}, \dots, {z}_{m}\}$ enumerate $F$ in increasing order of their heights.
 For each $0 \leq i, j \leq m$, put ${G}_{i, j} = \{\xi \in \dom({z}_{i}) \cap \dom({z}_{j}): {z}_{i}(\xi) \neq {z}_{j}(\xi)\}$ and ${G}_{i} = \{\xi \in \dom({z}_{i}) \cap \dom(t): t(\xi) \neq {z}_{i}(\xi)\}$.
 These sets are all finite.
 Choose $\zeta < \delta$ such that for each $0 \leq i, j \leq m$, ${G}_{i, j} \cap M \subset \zeta$, ${G}_{i} \cap M \subset \zeta$, and for each $N \in {\N}_{{q}_{0}}\[N \cap {\omega}_{1} \in \zeta\]$.
 For $0 \leq i \leq m$, put ${y}_{i} = {z}_{i} \restrict \zeta$, and put ${t}^{\ast} = t \restrict \zeta$.
 For each $0 \leq i \leq m$, choose an automorphism ${\phi}_{i}: \Sa \rightarrow \Sa$ such that ${\phi}_{i}({t}^{\ast}) = {y}_{i}$ and for all $\alpha \geq \zeta$ and all $u \in {\Sa}_{\alpha}\[{\phi}_{i}(u) = {\phi}_{i}(u \restrict \zeta) \cup u \restrict [\zeta, \alpha) \]$.
 We may assume that ${\phi}_{i} \in M$.
 It is easy to see that for any $s \in M$ with ${t}^{\ast} \leq s \leq t$ and any $0 \leq i \leq m$, ${y}_{i} \leq {\phi}_{i}(s) \leq {z}_{i}$.
 Also, for any $\zeta \leq \alpha < \delta$, $0 \leq i, j \leq m$, and $s \in {\Sa}_{\alpha} \cap {\cone}_{\Sa}({y}_{i})$, if $s \leq {z}_{j}$, then $s = {\phi}_{i}(t \restrict \alpha)$.
 There are two types of points in $F$ that we must deal with.
 Put ${I}_{0} = \{0 \leq i \leq m: {G}_{i} \setminus M = 0\}$ and ${I}_{1} = \{0 \leq i \leq m: {G}_{i} \setminus M \neq 0\}$.
 Observe that if $i \in {I}_{1}$, then ${\phi}_{i}(t \restrict \hgt({z}_{i})) \neq {z}_{i}$.
 On the other hand if $i \in {I}_{0}$, then ${\phi}_{i}(t \restrict \hgt({z}_{i})) = {z}_{i}$.
 For any $s \geq {t}^{\ast}$, define a two-player game $\G(s)$ as follows.
 The game lasts $m + 2$ moves.
 In the first move I chooses ${s}_{0} \geq s$ and II responds with a pair $\langle {x}_{0}, {u}_{0}\rangle$ that satisfies ${s}_{0} \leq {x}_{0} \leq {u}_{0}$.
 In the next move I chooses ${s}_{1}$ with ${u}_{0} \leq {s}_{1}$.
 At the end of $m + 2$ moves the players have constructed a sequence
 \begin{align*}
  {s}_{0}, \langle {x}_{0}, {u}_{0} \rangle, \dotsc, {s}_{m + 1}, \langle {x}_{m + 1}, {u}_{m + 1}\rangle
 \end{align*}
 such that $s \leq {s}_{0} \leq {x}_{0} \leq {u}_{0} \leq {s}_{1} \leq {x}_{1} \leq {u}_{1} \leq \dotsb \leq {s}_{m + 1} \leq {x}_{m + 1} \leq {u}_{m + 1}$.
 We say that II wins $\G(s)$ if there exist $\{{N}_{0}, \dotsc, {N}_{k}\}$ and $\{{v}_{i}: i \in {I}_{1}\}$ such that
 \begin{enumerate}
  \item[(6)]
    $\{{N}_{0}, \dotsc, {N}_{k}\}$ is an $\in$-chain of countable elementary submodels of $H(\chi)$ containing the relevant objects, and containing ${q}_{0}$.
  \item[(7)]
    $\forall i \in {I}_{1}\[{v}_{i} \in {\Sa}_{\hgt({x}_{i})} \cap {\cone}_{\Sa}({y}_{i}) \ \text{and} \ {v}_{i} \neq {\phi}_{i}({x}_{i})\]$.
  \item[(8)]
    $\langle \langle {F}_{{q}_{0}} \cup \{{v}_{i}: i \in {I}_{1}\} \cup \{{\phi}_{i}({x}_{i}): i \in {I}_{0}\}, {\N}_{{q}_{0}} \cup \{{N}_{0}, \dotsc, {N}_{k} \}\rangle, {x}_{m + 1}\rangle \in D$.
 \end{enumerate}
 Let ${D}^{\ast} = \{s \geq {t}^{\ast}: \text{II has a winning strategy in} \ \G(s)\}$. Then ${D}^{\ast} \in M$. Now we
 \begin{Claim} \label{claim:1}
  $\forall {s}^{\ast} \in M \cap {\pred}_{\Sa}(t) \exists s \in {D}^{\ast}\[{s}^{\ast} \leq s\]$.
 \end{Claim}
 \begin{proof}
 Suppose not.
 Fix ${s}^{\ast} \in M \cap {\pred}_{\Sa}(t)$ with ${s}^{\ast} \geq {t}^{\ast}$ such that for each $s \geq {s}^{\ast}$, I has a winning strategy in $\G(s)$.
 Fix $\Sigma \in M$ such that for each $s \geq {s}^{\ast}\[\Sigma(s) \ \text{is a winning strategy for I in} \ \G(s)\]$.
 Consider ${D}_{0} =$
 \begin{align*}
  \{{s}_{0} \in \Sa: \exists s \geq {s}^{\ast}\[{s}_{0} \ \text{is the first move of I according to} \ \Sigma(s)\]\}.
 \end{align*}
 ${D}_{0} \in M$ and applying Lemma \ref{lem:submain}, fix ${s}_{0} \in M \cap {\pred}_{\Sa}(t)$ and ${s}^{\ast} \leq s \leq {s}_{0}$ such that ${s}_{0}$ is the first move of I according to $\Sigma(s)$.
 Observe that $\Sigma(s) \in M$ and we can think of $\Sigma(s)$ as a subset of ${\Sa}^{< \omega} \times \Sa$.
 Hence $\Sigma(s) \in M \cap H(\chi)$ and hence $\Sigma(s) \in {N}^{\ast}_{i}$ for all $0 \leq i \leq k$.
 Now define a run of $\G(s)$ according to $\Sigma(s)$ as follows.
 Fix $0 \leq i \leq m$ and suppose that for all $j < i$, ${s}_{j}$ and $\langle {x}_{j}, {u}_{j} \rangle$ have already been specified in such a way that ${s}_{j}$ is according to $\Sigma(s)$, ${x}_{j} = t \restrict \hgt({z}_{j})$, and if ${s}_{i}$ is the continuation of this play according to $\Sigma(s)$, then ${s}_{i} \in {\pred}_{\Sa}(t)$ and $\hgt({s}_{i}) < \hgt({z}_{i})$ (when $i = 0$ this is satisfied because ${s}_{0} \in M \cap {\pred}_{\Sa}(t)$, and since ${z}_{0} \notin M$, $\hgt({z}_{0}) \geq \delta > \hgt({s}_{0})$).
 Let $0 \leq l \leq k$ be minimal such that ${z}_{i} \in {N}^{\ast}_{l}$.
 Note that $\langle {s}_{0}, \langle {x}_{0}, {u}_{0} \rangle, \dotsc, {s}_{i}\rangle \in {N}^{\ast}_{l}$, that $t \restrict \hgt({z}_{i}) \in {N}^{\ast}_{l}$, and that ${s}_{i} \leq t \restrict \hgt({z}_{i})$.
 Put ${x}_{i} = t \restrict \hgt({z}_{i})$ and define ${D}_{i + 1}$ as
 \begin{align*}
  \{{s}_{i + 1} \in \Sa: \exists {u}_{i} \geq {x}_{i}\[{s}_{i + 1} \ \text{is according to} \ \Sigma(s) \ \text{at} \ {s}_{0}, \langle {x}_{0}, {u}_{0} \rangle, \dotsc, {s}_{i}, \langle {x}_{i}, {u}_{i} \rangle\]\}.
 \end{align*}
 ${D}_{i + 1} \in {N}^{\ast}_{l}$.
 Applying Lemma \ref{lem:submain} choose ${s}_{i + 1} \in {N}^{\ast}_{l} \cap {\pred}_{\Sa}(t)$ and ${x}_{i} \leq {u}_{i} \leq {s}_{i + 1}$ such that ${s}_{i + 1}$ is according to $\Sigma(s)$ at ${s}_{0}, \langle {x}_{0}, {u}_{0} \rangle, \dotsc, {s}_{i}, \langle {x}_{i}, {u}_{i} \rangle$.
 Note that if $i + 1 \leq m$, then since ${z}_{i + 1} \notin {N}^{\ast}_{l}$, $\hgt({z}_{i + 1}) \geq {N}^{\ast}_{l} \cap {\omega}_{1} > \hgt({s}_{i + 1})$, so that the construction can be continued, while if $i + 1 = m + 1$, then ${s}_{i + 1} \leq t$, so that ${x}_{m + 1} = {u}_{m + 1} = t$ is a permissible last move for II.
 Now, it is clear that ${s}_{0}, \langle {x}_{0}, {u}_{0} \rangle, \dotsc, {s}_{m + 1}, \langle {x}_{m + 1}, {u}_{m + 1}\rangle$ is a run of $\G(s)$ according to $\Sigma(s)$.
 However, if we let ${N}_{l} = {N}^{\ast}_{l}$, for each $0 \leq l \leq k$ and ${v}_{i} = {z}_{i}$ for all $i \in {I}_{1}$, then it is clear that (6)-(8) are satisfied.
 So II wins this run of $\G(s)$, contradicting that $\Sigma(s)$ is a winning strategy for I.
 \end{proof}
 Using Lemma 8, fix $s \in M \cap {\pred}_{\Sa}(t)$ such that $s \geq {t}^{\ast}$ and II wins $\G(s)$.
 Let $\Sigma(s) \in M$ be a winning strategy for II.
 For a fixed $i \in {I}_{0}$ consider the following statement:
 \begin{align*}
  &\text{if for each} \ j < i, {s}_{j}, {x}_{j}, {u}_{j} \in M \cap {\pred}_{\Sa}(t) \ \text{are given such that they form} \\ &\text{a partial run of} \ \G(s) \ \text{according to} \ \Sigma(s) \text{, then there exists a continuation} \tag{${\ast}_{i}$} \\ &{s}_{i}, {x}_{i}, {u}_{i} \in M \cap {\pred}_{\Sa}(t) \ \text{according to} \ \Sigma(s) \ \text{of this partial run such that} \\ &\text{there is no} \ 0 \leq l \leq m \ \text{so that} \ c(\{{\phi}_{i}({x}_{i}), {z}_{l}\}) \ \text{is defined and is equal to} \ 1.
 \end{align*}
Assume for a moment that (${\ast}_{i}$) holds for all $i \in {I}_{0}$.
Then using Lemma \ref{lem:submain} it is possible to choose a run ${s}_{0}, \langle {x}_{0}, {u}_{0} \rangle, \dotsc {s}_{m + 1}, \langle {x}_{m + 1}, {u}_{m + 1}\rangle$ of $\G(s)$ according to $\Sigma(s)$ such that for each $0 \leq i \leq m + 1$, ${s}_{i}, {x}_{i}, {u}_{i} \in M \cap {\pred}_{\Sa}(t)$ and for each $i \in {I}_{0}$ and $0 \leq l \leq m$, if $c(\{{\phi}_{i}({x}_{i}), {z}_{l}\})$ is defined, then it is equal to 0.
As this run lies in $M$, choose $\{{N}_{0}, \dotsc, {N}_{k}\} \in M$ and $\{{v}_{i}: i \in {I}_{1}\} \in M$ such that (6)-(8) are satisfied.
Put ${F}_{r} = {F}_{{q}_{0}} \cup \{{v}_{i}: i \in {I}_{1}\} \cup \{{\phi}_{i}({x}_{i}): i \in {I}_{0}\}$ and ${\N}_{r} = {\N}_{{q}_{0}} \cup \{{N}_{0}, \dotsc, {N}_{k}\}$.
Then $r = \langle {F}_{r}, {\N}_{r} \rangle \in M$ and $\langle r, {x}_{m + 1}\rangle \in M \cap D$.
Moreover, note that $\forall i \in {I}_{1} \forall 0 \leq l \leq m \[{v}_{i} \not\leq {z}_{l}\]$, and that for any $i \in {I}_{0}$ and $0 \leq l \leq m$, if ${\phi}_{i}({x}_{i}) \leq {z}_{l}$, then $c(\{{\phi}_{i}({x}_{i}), {z}_{l}\}) = 0$.
Therefore, $\langle \langle {F}_{r} \cup \{{z}_{0}, \dotsc, {z}_{m}\}, {\N}_{r} \cup \{{N}^{\ast}_{0}, \dotsc, {N}^{\ast}_{k}\} \rangle, t\rangle$ is a common extension of $\langle r, {x}_{m + 1} \rangle$ and $\langle q, t \rangle$.
Since this contradicts the hypothesis that no member of $D \cap M$ is compatible with $\langle q, t \rangle$, there must exist some $i \in {I}_{0}$ for which (${\ast}_{i}$) fails.
Fix $i \in {I}_{0}$ and ${s}_{j}, {x}_{j}, {u}_{j} \in M \cap {\pred}_{\Sa}(t)$ for $j < i$ witnessing this.
Define $u$ as follows.
If $i = 0$, then $u = s$, else $u = {u}_{i - 1}$.
In either case, $u \in M \cap {\pred}_{\Sa}(t)$ and that ${t}^{\ast} \leq u$.
Write $v = {\phi}_{i}(u)$ and note that ${y}_{i} \leq v \leq {z}_{i}$.
Let $E = \{{x}_{i}: \exists {s}_{i} \exists {u}_{i}\[{s}_{0}, \langle {x}_{0}, {u}_{0} \rangle, \dotsc, {s}_{i}, \langle {x}_{i}, {u}_{i} \rangle \ \text{is a partial run of} \ \G(s) \ \text{according to} \ \Sigma(s)\]\}$.
Define $Y$ to be $\{{\phi}_{i}({x}_{i}): {x}_{i} \in E\}$.
Note that $E, Y \in M$.
Since $\Sigma(s)$ is winning for II, $Y \subset X$.
It is easy to see that $Y$ is dense above $v$ in $\Sa$.
Indeed, let $w \in \Sa$ with $w \geq v$.
Then ${\phi}^{-1}_{i}(w) \geq u$, and so is a legitimate $i$th move for I.
Hence there exist ${x}_{i}, {u}_{i}$ such that ${s}_{0}, \langle {x}_{0}, {u}_{0} \rangle, \dotsc, {\phi}^{-1}_{i}(w), \langle {x}_{i}, {u}_{i} \rangle$ is a partial run of $G(s)$ according to $\Sigma(s)$.
Hence ${x}_{i} \in E$ and ${x}_{i} \geq {\phi}^{-1}_{i}(w)$, whence ${\phi}_{i}({x}_{i}) \in Y$ and ${\phi}_{i}({x}_{i}) \geq w$.
There is a function $g: Y \rightarrow \Sa$ in $M$ such that for all $x \in Y$, there exists ${s}_{i}$ such that ${s}_{0}, \langle {x}_{0}, {u}_{0} \rangle, \dotsc, {s}_{i}, \langle {\phi}^{-1}_{i}(x), {\phi}^{-1}_{i}(g(x)) \rangle$ is a partial run of $\G(s)$ according to $\Sigma(s)$.
Clearly, $g(x) \geq x$.
Now we
\begin{Claim} \label{claim:2}
For each $\alpha < {\omega}_{1}$, there exists ${F}_{\alpha}$ satisfying (1) and (3) such that $\bigwedge {F}_{\alpha} \geq v$.
\end{Claim}
\begin{proof}
If not, then there is $\alpha \in M$ witnessing this.
Fix such $\alpha \in M$.
Let ${F}_{\alpha} = \{{z}_{j} \in F: {y}_{i} \leq {z}_{j}\}$.
It is easy to see that $\bigwedge {F}_{\alpha} \notin M$, and hence $\hgt(\bigwedge {F}_{\alpha}) \geq \alpha$.
It is also easy to see that $\bigwedge {F}_{\alpha} \geq v$.
Now suppose that $x \in {\pred}_{Y}(\bigwedge {F}_{\alpha}) \cap {\Sa}_{< \alpha}$, and assume that $g(x) \leq \bigwedge{F}_{\alpha}$.
As every element of $Y$ is above ${y}_{i}$, this implies that ${\phi}^{-1}_{i}(g(x)), {\phi}^{-1}_{i}(x) \in M \cap {\pred}_{\Sa}(t)$ and that there is ${s}_{i} \in M \cap {\pred}_{\Sa}(t)$ such that ${s}_{0}, \langle {x}_{0}, {u}_{0} \rangle, \dotsc, {s}_{i}, \langle {\phi}^{-1}_{i}(x), {\phi}^{-1}_{i}(g(x)) \rangle$ is a partial run of $\G(s)$ according to $\Sigma(s)$.
By the hypothesis that (${\ast}_{i}$) fails, there is ${z}_{l} \in F$ such that $c(\{x, {z}_{l}\}) = 1$.
In particular, ${y}_{i} \leq x \leq {z}_{l}$.
So ${z}_{l} \in {F}_{\alpha}$, and (3) is satisfied.
\end{proof}
To complete the proof of the lemma, choose ${F}_{\alpha}$ for each $\alpha < {\omega}_{1}$ as in the claim. Then $Z = \{\bigwedge {F}_{\alpha}: \alpha < {\omega}_{1}\}$ is an uncountable subset of $\Sa$.
Find $z \in Z$ such that $Z$ is dense above $z$ in $\Sa$.
Since $z \geq v$ and $Y$ is dense above $v$ in $\Sa$, it is possible to choose ${x}_{0} \in Y \subset X$  with ${x}_{0} \geq z$.
Now both $Z$ and $Y$ are dense above ${x}_{0}$ in $\Sa$.
\end{proof}
 \begin{Def} \label{def:sigma}
Let $R$ be a Suslin tree and fix $c: {R}^{[2]} \rightarrow 2$.
Fix a $C$-sequence $\langle {c}_{\alpha}: \alpha < {\omega}_{1}\rangle$ such
that if $\beta > 1$, then $\forall n \in \omega \[{c}_{\beta}(n) > 0\]$.
For $0 < \beta < {\omega}_{1}$, $t \in R$, and $n \in \omega$ define $L(\beta,
t, n)$ to be the set of all $A$ such that
\begin{enumerate}
 \item
 $\exists s \in R\[s < t \ \text{and} \ A \subset {\pred}_{R}(s)\]$.
 \item
 $\otp(A) = {\omega}^{{c}_{\beta}(n)}$.
 \item
 ${A}^{[2]} \subset {K}_{1}$.
 \item
 $\{u \in {\cone}_{R}(t): A \otimes \{u\} \subset {K}_{1}\}$ is uncountable.
\end{enumerate}
\end{Def}
Note that no member of $L(\beta, t, n)$ is empty.
Next, if $B \in L(\beta, t, n + 1)$ and $A \subset B$ with $\otp(A) = {\omega}^{{c}_{\beta}(n)}$, then $A \in L(\beta, t, n)$.
Moreover, if $t \leq u$, $A \in L(\beta, u, n)$, and $\exists s \in R\[s < t \ \text{and} \ A \subset {\pred}_{R}(s)\]$, then $A \in L(\beta, t, n)$.
Also if $t \leq u$, $A \in L(\beta, t, n)$, and $\{v \in {\cone}_{R}(u): A \otimes \{v\} \subset {K}_{1}\}$ is uncountable, then $A \in L(\beta, u, n)$.
\begin{Def} \label{def:sigma1}
Fix a well-ordering of $\Pset(R)$, say $\wo$.
For $A, B \subset R$ and $t \in R$  we say that \emph{$B$ follows $A$ with respect to $t$} if $\forall a \in A \forall b \in B\[a < b\]$, $A \otimes B \subset {K}_{1}$, and $\{u \in {\cone}_{R}(t): \left( A \cup B \right) \otimes \{u\} \subset {K}_{1}\}$ is uncountable.
It is clear that if $B$ follows $A$ with respect to $t$ and $C \subset B$, then $C$ follows $A$ with respect to $t$.
Also, if $t \leq u$ and $B$ follows $A$ with respect to $u$, then $B$ follows $A$ with respect to $t$.
If $t \leq u$, $B$ follows $A$ with respect to $t$, and $\{v \in {\cone}_{R}(u): \left( A \cup B \right) \otimes \{v\} \subset {K}_{1}\}$ is uncountable, then $B$ follows $A$ with respect to $u$.
For $t \in R$ and $0 < \beta < {\omega}_{1}$, define a function ${\sigma}_{\beta, t}: \omega \rightarrow \Pset(R)$ as follows.
Fix $n \in \omega$ and suppose that for all $m < n$, ${\sigma}_{\beta, t}(m)$ has been defined.
Put ${A}_{\beta, t, n} = {\bigcup}_{m < n}{{\sigma}_{\beta, t}(m)} \subset R$.
Consider
\begin{align*}
 \left\{B \in L(\beta, t, n): B \ \text{follows} \ {A}_{\beta, t, n} \ \text{with respect to} \ t \right\}.
\end{align*}
If this set is empty, then set ${\sigma}_{\beta, t}(n) = 0$.
Otherwise, set ${\sigma}_{\beta, t}(n)$ to be the $\wo$-least element of this set.
Define ${A}_{\beta, t} = {\bigcup}_{n \in \omega}{{\sigma}_{\beta, t}(n)}$.
\end{Def}
It is clear that for each $n \in \omega$, either ${\sigma}_{\beta, t}(n) = 0$ or ${\sigma}_{\beta, t}(n) \in L(\beta, t, n)$, but not both.
In either case, observe that ${\sigma}_{\beta, t}(n) \subset {\pred}_{R}(t)$, that $\forall a \in {A}_{\beta, t, n} \forall b \in {\sigma}_{\beta, t}(n)\[a < b\]$, and that $\left( {A}_{\beta, t, n} \otimes {\sigma}_{\beta, t} (n) \right) \cup {\left({\sigma}_{\beta, t}(n)\right)}^{\[2\]} \subset {K}_{1}$.
Therefore, if $\forall n \in \omega \[{\sigma}_{\beta, t}(n) \in L(\beta, t, n)\]$, then ${A}_{\beta, t}$ is a subset of ${\pred}_{R}(t)$ of order type ${\omega}^{\beta}$ such that ${A}^{\[2\]}_{\beta, t} \subset {K}_{1}$.
Furthermore, for all $t \in R \setminus \{\min(R)\}$ and all $n \in \omega$, there is $s \in R$ with $s < t$ such that ${A}_{\beta, t, n} \subset {\pred}_{R}(s)$.
\begin{Lemma} \label{lem:sigma1}
For any $t \in R$ and $0 < \beta < {\omega}_{1}$, if there exists $m \in \omega$ such that ${\sigma}_{\beta, t}(m) = 0$, then $\forall n \geq m \[{\sigma}_{\beta, t}(n) = 0\]$.
\end{Lemma}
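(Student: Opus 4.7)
The plan is to prove this by induction on $n \geq m$, with the base case $n = m$ holding by hypothesis. For the inductive step, suppose that ${\sigma}_{\beta, t}(k) = 0$ for all $m \leq k \leq n$. Then by the recursive definition of ${A}_{\beta, t, n+1}$, the set ${A}_{\beta, t, n+1}$ equals ${A}_{\beta, t, m}$ since each ${\sigma}_{\beta, t}(k) = \emptyset$ for $m \leq k \leq n$ contributes nothing new. So it suffices to show that there is no $B \in L(\beta, t, n+1)$ that follows ${A}_{\beta, t, m}$ with respect to $t$, under the assumption that no such $B$ existed in $L(\beta, t, m)$.

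Argue by contradiction: suppose $B \in L(\beta, t, n+1)$ follows ${A}_{\beta, t, m}$ with respect to $t$. The idea is to pass to a suitable subset $B' \subset B$ that belongs to $L(\beta, t, m)$ and still follows ${A}_{\beta, t, m}$ with respect to $t$, yielding a direct contradiction with the hypothesis ${\sigma}_{\beta, t}(m) = 0$. Split into two cases according to the structure of the $C$-sequence at $\beta$. If $\beta = \alpha + 1$ is a successor, then ${c}_{\beta}(k) = \alpha$ for every $k \in \omega$, so $\otp(B) = {\omega}^{{c}_{\beta}(m)}$, and one simply takes $B' = B$. If $\beta$ is a limit ordinal, then the enumeration of ${c}_{\beta}$ is strictly increasing, whence ${c}_{\beta}(n+1) > {c}_{\beta}(m)$, so ${\omega}^{{c}_{\beta}(n+1)} > {\omega}^{{c}_{\beta}(m)}$; take $B'$ to be the initial segment of $B$ of order type ${\omega}^{{c}_{\beta}(m)}$.

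Verify the two required properties of $B'$ directly from the monotonicity observations stated immediately after Definitions \ref{def:sigma} and \ref{def:sigma1}. For membership in $L(\beta, t, m)$: clauses (1), (3), (4) in Definition \ref{def:sigma} are inherited downward from $B$ (the witness $s < t$ with $B \subset {\pred}_{R}(s)$ works for $B'$; ${(B')}^{\[2\]} \subset {B}^{\[2\]} \subset {K}_{1}$; and the uncountable set of $u \in {\cone}_{R}(t)$ with $B \otimes \{u\} \subset {K}_{1}$ is contained in the analogous set for $B'$). Clause (2) holds by construction of $B'$. That $B'$ follows ${A}_{\beta, t, m}$ with respect to $t$ is precisely the remark in Definition \ref{def:sigma1} that subsets preserve the ``follows'' relation.

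Once $B'$ has been produced, it witnesses that the set over which ${\sigma}_{\beta, t}(m)$ is defined is nonempty, contradicting ${\sigma}_{\beta, t}(m) = 0$. Since no step requires anything beyond the monotonicity remarks already established, there is no real obstacle; the only point requiring attention is the split into successor versus limit $\beta$, which is needed because the trivial choice $B' = B$ only works when ${c}_{\beta}$ is constant.
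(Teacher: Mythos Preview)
Your proof is correct and follows essentially the same approach as the paper's: induction on $n \geq m$, then a contradiction obtained by passing from a would-be $B \in L(\beta,t,n+1)$ to a subset of order type ${\omega}^{{c}_{\beta}(m)}$ (the paper reduces one step at a time to $n$ rather than directly to $m$, and handles the successor/limit split implicitly by simply writing ``choose $B$ of order type ${\omega}^{{c}_{\beta}(n)}$'', but these differences are cosmetic).
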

\begin{proof}
Prove by induction on $n \geq m$ that ${\sigma}_{\beta, t}(n) = 0$.
When $n = m$, this is the hypothesis.
Suppose this is true for $n \geq m$.
Then ${A}_{\beta, t, n + 1} = {A}_{\beta, t, n} \cup {\sigma}_{\beta, t}(n) = {A}_{\beta, t, n}$.
If ${\sigma}_{\beta, t}(n + 1) \neq 0$, then ${\sigma}_{\beta, t}(n + 1) \in L(\beta, t, n + 1)$, and ${\sigma}_{\beta, t}(n + 1)$ follows ${A}_{\beta, t, n}$ with respect to $t$.
In particular, $\otp({\sigma}_{\beta, t}(n + 1)) = {\omega}^{{c}_{\beta}(n + 1)}$.
Choose $B \subset {\sigma}_{\beta, t}(n + 1)$ with $\otp(B) = {\omega}^{{c}_{\beta}(n)}$.
Then $B \in L(\beta, t, n)$ and $B$ follows ${A}_{\beta, t, n}$ with respect to $t$.
This contradicts the fact that ${\sigma}_{\beta, t}(n) = 0$.
\end{proof}
\begin{Def} \label{def:sigma2}
 For $t \in R$ and $0 < \beta < {\omega}_{1}$, if there is $n \in \omega$ such that ${\sigma}_{\beta, t}(n) = 0$, then let ${n}_{\beta, t}$ be the least such $n$.
 Otherwise, put ${n}_{\beta, t} = \omega$.
 Define ${X}_{\beta, t} = \{u \in {\cone}_{R}(t): {A}_{\beta, t} \otimes \{u\} \subset {K}_{1}\}$.
\end{Def}
Note that ${A}_{\beta, t} = {\bigcup}_{n < {n}_{\beta, t}}{{\sigma}_{\beta, t}(n)}$.
Observe also that if ${n}_{\beta, t} < \omega$, then ${X}_{\beta, t}$ is uncountable.
\begin{Lemma} \label{lem:sigma}
 Fix $t, u \in R$ with $t \leq u$, and $0 < \beta < {\omega}_{1}$.
 Suppose that ${X}_{\beta, t} \cap {\cone}_{R}(u)$ is uncountable.
 Moreover assume that $\exists s \in R \[s < t \ \text{and} \ {A}_{\beta, u} \subset {\pred}_{R}(s)\]$.
 Then ${\sigma}_{\beta, t} = {\sigma}_{\beta, u}$.
\end{Lemma}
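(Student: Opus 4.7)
My plan is to prove the lemma by induction on $n \in \omega$, showing that $\sigma_{\beta,t}(n) = \sigma_{\beta,u}(n)$ for every $n$. Given the inductive hypothesis for all $m < n$, the sets $A_{\beta,t,n}$ and $A_{\beta,u,n}$ coincide; call this common value $A$. I will let
\[
  S_t = \{B \in L(\beta,t,n) : B \text{ follows } A \text{ with respect to } t\},
\]
and define $S_u$ analogously with $u$ in place of $t$. Since $\sigma_{\beta,t}(n)$ is the $\wo$-least element of $S_t$ when $S_t \neq \emptyset$ and is $0$ otherwise (and likewise for $\sigma_{\beta,u}(n)$), it suffices to establish (a) $S_u \subset S_t$, and (b) $\sigma_{\beta,t}(n) \in S_u$ whenever $\sigma_{\beta,t}(n) \neq 0$. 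A short case analysis using antisymmetry of $\wo$ then closes the induction: if exactly one of $\sigma_{\beta,t}(n), \sigma_{\beta,u}(n)$ vanishes, (a) yields a contradiction; if both are nonzero, (a) and (b) force each to lie in the other's candidate set, so both equal the common $\wo$-least.

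For (a), I would fix $B \in S_u$ and verify the four clauses of $L(\beta,t,n)$. Clauses (2) and (3) involve only $B$ and transfer automatically. Clause (1) holds because $B \subset A_{\beta,u} \subset \pred_R(s)$ with $s < t$, by the standing hypothesis of the lemma. Clause (4) is inherited from $\cone_R(u) \subset \cone_R(t)$, since the uncountable witness set for (4) in $L(\beta,u,n)$ already lies inside $\cone_R(u)$. The downgrade ``$B$ follows $A$ with respect to $u$ implies $B$ follows $A$ with respect to $t$'' is one of the observations recorded just after Definition \ref{def:sigma1}.

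The delicate direction is (b), which is where the hypothesis that $X_{\beta,t} \cap \cone_R(u)$ is uncountable must be invoked. Setting $B = \sigma_{\beta,t}(n)$, both $B$ and $A \cup B$ are contained in $A_{\beta,t}$, so every $w \in X_{\beta,t} \cap \cone_R(u)$ satisfies $B \otimes \{w\} \subset K_1$ and $(A \cup B) \otimes \{w\} \subset K_1$. This simultaneously supplies clause (4) of $L(\beta,u,n)$ for $B$ and the uncountability required to upgrade ``$B$ follows $A$ with respect to $t$'' to ``$B$ follows $A$ with respect to $u$'' via the last observation after Definition \ref{def:sigma1}; clause (1) for $L(\beta,u,n)$ is immediate because $s < t \leq u$. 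The main obstacle I anticipate is resisting the temptation to prove the stronger $S_t \subset S_u$: for an arbitrary $B \in S_t$, clause (4) of $L(\beta,u,n)$ would demand uncountably many $w \in \cone_R(u)$ with $B \otimes \{w\} \subset K_1$, which one can only guarantee when $B$ sits inside $A_{\beta,t}$. This asymmetry is precisely why the argument must pivot on the specific candidate $\sigma_{\beta,t}(n)$ rather than on all of $S_t$.
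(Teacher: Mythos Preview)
Your overall strategy matches the paper's exactly: induction on $n$, with the inductive step reducing to showing that whenever one of $\sigma_{\beta,t}(n),\sigma_{\beta,u}(n)$ is nonzero it lies in the other side's candidate set, so the two $\wo$-least choices must agree. Part (b) is fine and is precisely the argument the paper gives.

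The gap is in part (a). You claim $S_u \subset S_t$, and your justification of clause (1) of $L(\beta,t,n)$ is that for $B \in S_u$ one has $B \subset A_{\beta,u} \subset \pred_R(s)$. But an arbitrary $B \in L(\beta,u,n)$ need not be contained in $A_{\beta,u}$; the set $A_{\beta,u}$ is built only from the particular choices $\sigma_{\beta,u}(m)$, not from every element of every $L(\beta,u,m)$. Clause (1) of $L(\beta,u,n)$ only gives some $s' < u$ with $B \subset \pred_R(s')$, and $s'$ may well lie in the interval $[t,u)$, so you cannot conclude clause (1) of $L(\beta,t,n)$. This is exactly the same obstruction you correctly diagnosed for the direction $S_t \subset S_u$; it applies symmetrically here. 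The fix, which is what the paper does, is to weaken (a) to the statement you actually need: if $\sigma_{\beta,u}(n)\neq 0$ then $\sigma_{\beta,u}(n)\in S_t$. For this specific $B=\sigma_{\beta,u}(n)$ you do have $B\subset A_{\beta,u}\subset \pred_R(s)$ with $s<t$, and your argument goes through. Your case analysis already only uses this weaker (a$'$), so nothing else changes.
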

\begin{proof}
First a preliminary observation: It follows from the hypotheses that for each $n \in \omega$, $\{v \in {\cone}_{R}(u): {\sigma}_{\beta, t}(n) \otimes \{v\} \subset {K}_{1}\}$ and $\{v \in {\cone}_{R}(u): {A}_{\beta, t, n} \otimes \{v\} \subset {K}_{1}\}$ are both uncountable.
Now suppose for a contradiction that there exists $n \in \omega$ such that ${\sigma}_{\beta, t}(n) \neq {\sigma}_{\beta, u}(n)$, and choose the minimal $n \in \omega$ with this property.
Then ${A}_{\beta, t, n} = {A}_{\beta, u, n}$.
Assume that ${\sigma}_{\beta, t}(n) \neq 0$.
Thus ${\sigma}_{\beta, t}(n) \in L(\beta, t, n)$ and ${\sigma}_{\beta, t}(n)$ follows ${A}_{\beta, t, n}$ with respect to $t$.
Since $\{v \in {\cone}_{R}(u): {\sigma}_{\beta, t}(n) \otimes \{v\} \subset {K}_{1}\}$ is uncountable, ${\sigma}_{\beta, t}(n) \in L(\beta, u, n)$.
Also, since $\{v \in {\cone}_{R}(u): \left( {A}_{\beta, t, n} \cup {\sigma}_{\beta, t}(n) \right) \otimes \{v\} \subset {K}_{1}\} = \{v \in {\cone}_{R}(u): {A}_{\beta, t, n + 1} \otimes \{v\} \subset {K}_{1}\}$ is uncountable, ${\sigma}_{\beta, t}(n)$ follows ${A}_{\beta, u, n}$ with respect to $u$.
It follows that ${\sigma}_{\beta, u}(n) \neq 0$ and ${\sigma}_{\beta, u}(n) \wo {\sigma}_{\beta, t}(n)$.

Next suppose that ${\sigma}_{\beta, u}(n) \neq 0$.
Then ${\sigma}_{\beta, u}(n) \in L(\beta, u, n)$ and ${\sigma}_{\beta, u}(n)$ follows ${A}_{\beta, u, n}$ with respect to $u$.
Note that there is $s \in R$ such that $s < t$ and ${\sigma}_{\beta, u}(n) \subset {A}_{\beta, u} \subset {\pred}_{R}(s)$.
Therefore, ${\sigma}_{\beta, u}(n) \in L(\beta, t, n)$.
Also, ${\sigma}_{\beta, u}(n)$ follows ${A}_{\beta, t, n}$ with respect to $t$.
Thus ${\sigma}_{\beta, t}(n) \neq 0$ and ${\sigma}_{\beta, t}(n) \wo {\sigma}_{\beta, u}(n)$.
However, these two implication that we have established imply that both ${\sigma}_{\beta, t}(n)$ and ${\sigma}_{\beta, u}(n)$ are equal to $0$, a contradiction.
\end{proof}
Next is a lemma that is of independent interest and can be considered as a part of set theory folklore.
It asserts the existence of certain types of ultrafilters on countable indecomposable ordinals under the hypothesis $\p > {\omega}_{1}$.
This lemma also plays a similar role in Todorcevic's proof that $\PID + \p > {\omega}_{1}$ implies ${\omega}_{1} \rightarrow {({\omega}_{1}, \alpha)}^{2}$.
\begin{Lemma} \label{lem:ultrafilter}
 Assume $\p >  {\omega}_{1}$.
 For each $0 < \beta < {\omega}_{1}$ and a well ordered set $X = \langle X,
{<}_{X} \rangle$ of order type ${\omega}^{\beta}$ there is an ultrafilter
${\U}_{\beta}(X)$ on $X$ such that
 \begin{enumerate}
  \item
   For each $A \in {\U}_{\beta}(X)$, $\otp(A) = {\omega}^{\beta}$.
  \item
   For any $\F \subset {\U}_{\beta}(X)$ of size at most ${\omega}_{1}$, there is
$Y \subset X$ such that $\otp(Y) = {\omega}^{\beta}$ and $\forall Z \in \F \[Y \setminus Z \ \text{is a bounded subset of} \ X\]$. Moreover if $\lc \F \rc =
{\omega}_{1}$, then there exists $\GG \in {\[\F\]}^{{\omega}_{1}}$ such that
$\otp\left(\bigcap \GG \right) = {\omega}^{\beta}$.
 \end{enumerate}
\end{Lemma}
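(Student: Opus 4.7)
The plan is to construct $\U_\beta(X)$ by induction on $\beta$ with $0 < \beta < \omega_1$, exploiting the additive indecomposability of $\omega^\beta$ and the hypothesis $\p > \omega_1$. Indecomposability lets us extend any filter of type-$\omega^\beta$ subsets of $X$ with the type-$\omega^\beta$ finite intersection property to a maximal such filter via Zorn's lemma; the resulting ultrafilter automatically satisfies (1). Since the pseudo-intersection $Y$ in (2) is not required to lie in $\U_\beta(X)$, the content of (2) reduces to the following \emph{key claim}, independent of the choice of $\U_\beta(X)$: every $\omega_1$-sized family of subsets of $X$ of type $\omega^\beta$ with the type-$\omega^\beta$ FIP admits a $Y \subset X$ of type $\omega^\beta$ with $Y \setminus Z$ bounded in $X$ for every $Z$ in the family.

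For $\beta = 1$ the key claim is the classical statement that $\p > \omega_1$ gives pseudo-intersections for $\omega_1$-sized filter bases of infinite subsets of $\omega$. For $\beta > 1$, fix a decomposition $X = {\bigsqcup}_{n < \omega}{X_n}$ into consecutive initial intervals of strictly increasing order types $\omega^{\gamma_n}$ converging to $\omega^\beta$ (take $\gamma_n \equiv \gamma$ if $\beta = \gamma + 1$, and any fundamental sequence if $\beta$ is a limit). A Cantor normal form computation shows that $A \subset X$ has type $\omega^\beta$ iff $\{n : \otp(A \cap X_n) \geq \omega^\delta\}$ is infinite for every $\delta < \beta$. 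Using this characterization together with the inductively constructed $\U_{\gamma_n}(X_n)$ and $\U_1(\omega)$, take $\U_\beta(X)$ to be the Fubini ultrafilter $\{A \subset X : \{n : A \cap X_n \in \U_{\gamma_n}(X_n)\} \in \U_1(\omega)\}$, which concentrates on sets of type $\omega^\beta$.

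The main obstacle is the inductive step of the key claim. Given $\F$ of size $\omega_1$, close under finite intersections (still size $\omega_1$) and apply the moreover clause of the base case to the $\omega_1$-family $\{I_Z : Z \in \F\} \subset \U_1(\omega)$, where $I_Z = \{n : \otp(Z \cap X_n) = \omega^{\gamma_n}\}$, to obtain an uncountable subfamily $\mathcal{H} \subset \F$ with $J_0 = \bigcap \{I_Z : Z \in \mathcal{H}\}$ infinite. Inside each $X_n$ with $n \in J_0$, the trace family $\{Z \cap X_n : Z \in \mathcal{H}\}$ has the type-$\omega^{\gamma_n}$ FIP, and inductively we select $Y_n \subset X_n$ of type $\omega^{\gamma_n}$ contained in uncountably many $Z \cap X_n$ via moreover. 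The coordination of these selections across $n \in J_0$ so that each $Z \in \mathcal{H}$ satisfies $Y_n \subset Z$ for all but finitely many $n$ is the delicate step; naive bookkeeping with a function $\omega \to \omega_1$ fails because $\omega_1$ has uncountable cofinality, and $\p > \omega_1$ must be invoked at the level of the countable index set for each $Z$ individually to ensure the exception sets remain finite. Once coordinated, $Y = {\bigcup}_{n \in J_0}{Y_n}$ has type $\omega^\beta$ by the sum characterization, and $Y \setminus Z$ is contained in finitely many blocks for every $Z \in \mathcal{H}$, hence bounded in $X$. The moreover clause at level $\beta$ is then immediate: given any $\F \in {\[\U_\beta(X)\]}^{{\omega}_{1}}$ and $Y$ from (2), each bound $\alpha_Z < \omega^\beta$ of $Y \setminus Z$ lies in a countable set, so some $\alpha_0$ is realized by uncountably many $Z$'s, giving $\GG \in {\[\F\]}^{{\omega}_{1}}$ with $Y \setminus \alpha_0 \subset \bigcap \GG$ of order type $\omega^\beta$.
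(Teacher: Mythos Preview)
Your inductive setup and the Fubini definition of $\U_\beta(X)$ match the paper's, and your derivation of the ``moreover'' clause from the first part of (2) via pigeonhole on the bounds is fine. The gap is in the inductive step for the first part of (2), precisely at the point you yourself flag as ``the delicate step.''

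The approach you sketch---pass to an uncountable $\mathcal{H}\subset\F$ on which the index sets $I_Z$ have infinite intersection $J_0$, and then at each $n\in J_0$ invoke the \emph{moreover} clause of the inductive hypothesis to pick $Y_n$ contained in uncountably many traces---does not close. First, after passing to $\mathcal{H}$ you can at best establish $Y\setminus Z$ bounded for $Z\in\mathcal{H}$, not for all $Z\in\F$ as (2) requires. Second, even for $\mathcal{H}$, the uncountable subfamily $\GG_n\subset\mathcal{H}$ that $Y_n$ is contained in depends on $n$, and there is no mechanism to force $\bigcap_n\GG_n$ (or any almost-intersection) to be uncountable; your remark that ``$\p>\omega_1$ must be invoked at the level of the countable index set for each $Z$'' does not name an actual argument. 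Third, the claim that the trace family $\{Z\cap X_n:Z\in\mathcal{H}\}$ has the type-$\omega^{\gamma_n}$ FIP is not justified: a finite intersection $Z_1\cap\cdots\cap Z_k$ lies in $\F$ but need not lie in $\mathcal{H}$, so $n\in J_0$ gives no information about $\otp(Z_1\cap\cdots\cap Z_k\cap X_n)$.

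The paper's argument avoids all of this by using the \emph{first} part of (2), not the moreover clause, as the inductive input at each coordinate. For each $n$ one applies the inductive hypothesis to the full family $\F_n=\{Z\cap X_n:Z\in\F,\ Z\cap X_n\in\U_{\gamma_n}(X_n)\}$ to obtain $Y_n\subset X_n$ of type $\omega^{\gamma_n}$ with $Y_n\setminus(Z\cap X_n)$ bounded in $X_n$ for \emph{every} relevant $Z$. This produces, for each $Z\in\F$, a function $n\mapsto$ (a bound for $Y_n\setminus(Z\cap X_n)$), and now $\b>\omega_1$ (a consequence of $\p>\omega_1$) yields a single sequence $\langle B_n\rangle$ of bounded sets eventually dominating all of these. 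Combined with a pseudo-intersection $D$ of the sets $\dom(Z)=\{n:Z\cap X_n\in\U_{\gamma_n}(X_n)\}$, the set $Y=\bigcup_{n\in D}(Y_n\setminus B_n)$ works for every $Z\in\F$. The key idea you are missing is that the coordination across $n$ is achieved by a bounding argument on the defects, not by intersecting the subfamilies.
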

\begin{proof}
The proof is by induction on $\beta$.
If $\beta = 1$, then $X$ has order type $\omega$, and we can let
${\U}_{\beta}(X)$ be any ultrafilter on $X$.
It is clear that (1) is satisfied.
For (2), fix $\F \subset {\U}_{\beta}(X)$ with $\lc \F \rc \leq {\omega}_{1}$.
As $\p > {\omega}_{1}$, there is $Y \in {\[X\]}^{\omega}$ such that $\forall Z
\in \F \[Y \setminus Z \ \text{is finite}\]$.
It is clear that $\otp(Y) = \omega$ and $\forall Z \in \F \[Y \setminus Z \ \text{is bounded in} \ X\]$.
Next, if $\lc \F \rc = {\omega}_{1}$ and $Y \subset X$ is as above, then there
is a finite $F \in {\[Y\]}^{< \omega}$ and $\GG \in {\[\F\]}^{{\omega}_{1}}$
such that $\forall Z \in \GG \[Y \setminus F \subset Z\]$.
As $\otp(Y \setminus F) = \omega$, it is clear that $\otp(\bigcap \GG) =
\omega$.

Next suppose $\beta > 1$.
Fix $\langle {X}_{n}: n \in \omega \rangle$ such that $X = {\bigcup}_{n \in
\omega}{{X}_{n}}$ and $\forall n \in \omega \[\otp({X}_{n}) =
{\omega}^{{c}_{\beta}(n)} \wedge {X}_{n} \; {<}_{X} \; {X}_{n + 1}\]$.
Here ${X}_{n} \; {<}_{X} \; {X}_{n + 1}$ means $\forall x \in {X}_{n} \forall y
\in {X}_{n + 1}\[x \; {<}_{X} \; y\]$.
Assume that ${\U}_{{c}_{\beta}(n)}({X}_{n})$ has been constructed for all $n \in
\omega$.
Put ${\U}_{\beta}(X) = \{A \subset X: \{n \in \omega: A \cap {X}_{n} \in
{\U}_{{c}_{\beta}(n)}({X}_{n})\} \in {\U}_{1}(\omega)\}$.
This is clearly an ultrafilter on $X$.
It is also clear that (1) is satisfied by ${\U}_{\beta}(X)$.
For (2), fix $\F \subset {\U}_{\beta}(X)$ of size at most ${\omega}_{1}$.
For each $n \in \omega$, put
\begin{align*}
 {\F}_{n} = \{Z \cap {X}_{n}: Z \in \F \wedge Z \cap {X}_{n} \in
{\U}_{{c}_{\beta}(n)}({X}_{n})\}.
 \end{align*}
${\F}_{n} \subset {\U}_{{c}_{\beta}(n)}({X}_{n})$ and $\lc {\F}_{n} \rc \leq
{\omega}_{1}$.
So choose ${Y}_{n} \subset {X}_{n}$ such that $\otp({Y}_{n}) =
{\omega}^{{c}_{\beta}(n)}$ and $\forall A \in {\F}_{n}\[{Y}_{n} \setminus A \ \text{is a bounded subset of} \ {X}_{n}\]$.
For each $Z \in \F$ define a function ${S}_{Z}$ with domain $\omega$ as follows.
Given $n \in \omega$, if $Z \cap {X}_{n} \in {\U}_{{c}_{\beta}(n)}({X}_{n})$,
then ${S}_{Z}(n) = {Y}_{n} \setminus \left( Z \cap {X}_{n} \right)$.
Otherwise ${S}_{Z}(n) = 0$.
In either case ${S}_{Z}(n)$ is a bounded subset of ${X}_{n}$.
Use the fact that $\b < {\omega}_{1}$ to choose $\langle {B}_{n}: n \in \omega
\rangle$ such that for each $\forall n \in \omega \[{B}_{n} \ \text{is a bounded
subset of} \ {X}_{n}\]$ and $\forall Z \in \F \forallbutfin n \in \omega
\[{S}_{Z}(n) \subset {B}_{n}\]$.
Note that $\otp({Y}_{n} \setminus {B}_{n}) = {\omega}^{{c}_{\beta}(n)}$, for all
$n \in \omega$.
For each $Z \in \F$, put $\dom(Z) = \{n \in \omega: Z \cap {X}_{n} \in
{\U}_{{c}_{\beta}(n)}({X}_{n})\}$.
$\{\dom(Z): Z \in \F\}$ is a subset of ${\U}_{1}(\omega)$ of size at most
${\omega}_{1}$.
Choose $D \in \cube$ such that $\forall Z \in \F \[D \setminus \dom(Z) \
\text{is finite}\]$.
Put $Y = {\bigcup}_{n \in D}{\left({Y}_{n} \setminus {B}_{n}\right)}$.
It is clear that $Y \subset X$ and $\otp(Y) = {\omega}^{\beta}$.
Fix $Z \in \F$ and fix ${n}_{Z}\in \omega$ such that $D \setminus {n}_{Z}
\subset \dom(Z)$ and $\forall n \geq {n}_{Z}\[{S}_{Z}(n) \subset {B}_{n}\]$.
We claim that $Y \setminus Z$ is bounded by $\min\left({X}_{{n}_{Z}}\right)$.
Indeed fix $y \in Y \setminus Z$.
Then $y \in {Y}_{n} \setminus {B}_{n}$ for some $n \in D$.
If $n \geq {n}_{Z}$, then $n \in \dom(Z)$ and ${S}_{Z}(n) \subset {B}_{n}$.
So $Z \cap {X}_{n} \in {\U}_{{c}_{\beta}(n)}({X}_{n})$, and so ${S}_{Z}(n) =
{Y}_{n} \setminus \left( Z \cap {X}_{n} \right)$.
As $y \in {Y}_{n}$ and $y \notin Z$, we have that $y \in {S}_{Z}(n)$.
However, since $y \notin {B}_{n}$, this contradicts ${S}_{Z}(n) \subset
{B}_{n}$.
Therefore, $n < {n}_{z}$ and since $y \in {Y}_{n} \subset {X}_{n}$, we conclude
that $y \; {<}_{X} \; \min\left( {X}_{{n}_{Z}}\right)$.

Now, if $\lc \F \rc = {\omega}_{1}$, then there exist $n \in \omega$ and $\GG \in {\[\F\]}^{{\omega}_{1}}$ such that $\forall Z \in \GG\[{n}_{Z} = n\]$.
Note that $\otp\left(\left\{y \in Y: y \; {<}_{X} \; \min\left( {X}_{n} \right) \right\} \right) < {\omega}^{\beta}$.
Therefore, ${Y}^{\ast} = Y \setminus \left\{y \in Y: y \; {<}_{X} \; \min\left(
{X}_{n} \right) \right\}$ has order type ${\omega}^{\beta}$.
By what has been proved above, ${Y}^{\ast} \subset \bigcap \GG$, whence
$\otp\left( \bigcap \GG \right) = {\omega}^{\beta}$.
\end{proof}
We can now finish the proof of Theorem \ref{thm:main}.
\begin{proof}[Proof of Theorem \ref{thm:main}]
 We show by induction on $\beta < {\omega}_{1}$ that for any Suslin tree $T \subset \Sa$ and any $c:{T}^{\[2\]} \rightarrow 2$, either there exists $Y \in {\[T\]}^{{\omega}_{1}}$ and $g: Y \rightarrow \Sa$ such that $\forall y \in Y\[g(y) \geq y\]$ and ${Y}^{\[2\]}_{g} \subset {K}_{0, c}$, or for each $X \in {\[T\]}^{{\omega}_{1}}$ there exist $x \in X$, $B \subset {\pred}_{X}(x)$, and $Z \in {\[{\cone}_{X}(x)\]}^{{\omega}_{1}}$ such that $\otp(B) = {\omega}^{\beta}$ and ${B}^{\[2\]} \cup B \otimes Z \subset {K}_{1, c}$.
 This is sufficient to imply Theorem \ref{thm:main}, for given $S \in {\[\Sa\]}^{{\omega}_{1}}$ and $c: {S}^{\[2\]} \rightarrow 2$, let $T \in {\[S\]}^{{\omega}_{1}}$ be a Suslin tree.
 Suppose the first alternative of Theorem \ref{thm:main} fails and let $\alpha < {\omega}_{1}$ be given.
 Choose $\beta < {\omega}_{1}$ such that $\alpha \leq {\omega}^{\beta}$.
 Applying the above statement to $\beta$, $T$, and $c \restrict {T}^{\[2\]}$ with $X = T$, we can get $x \in T$ and $B \subset {\pred}_{S}(x)$ such that $\otp(B) = {\omega}^{\beta}$ and ${B}^{\[2\]} \subset {K}_{1, c}$.
 Taking ${B}^{\ast} \subset B$ with $\otp({B}^{\ast}) = \alpha$, we get what we want.

 To prove the above statement, fix $\beta < {\omega}_{1}$ and assume that the statement holds for all smaller ordinals.
 Let $T$ and $c$ be given and suppose that the first alternative of the statement fails.
 In particular, this implies that there is no $X \in {\[T\]}^{{\omega}_{1}}$, such that $\P(X, c \restrict {X}^{\[2\]})$ is proper and preserves $\Sa$.
 For if not, let ${D}_{\alpha} = \{q \in \P(X, c \restrict {X}^{\[2\]}): \exists t \in {F}_{q}\[\hgt(t) > \alpha\]\}$.
 By Lemma \ref{lem:density}, ${D}_{\alpha}$ is dense in $\P(X, c \restrict {X}^{\[2\]})$.
 Applying $\PFA(\Sa)$, let $G$ be a filter on $\P(X, c \restrict {X}^{\[2\]})$ such that $\forall \alpha < {\omega}_{1} \[G \cap {D}_{\alpha} \neq 0\]$.
 Let $Y = {\bigcup}_{q \in G}{{F}_{q}}$.
 Then $Y \in {\[T\]}^{{\omega}_{1}}$.
 If we let $g: Y \rightarrow \Sa$ be defined by $g(y) = y$, for all $y \in Y$, then ${Y}^{\[2\]}_{g} = {Y}^{\[2\]} \subset {K}_{0, c}$, contradicting the hypothesis that the first alternative of the statement fails.

 Now, fix any $X \in {\[T\]}^{{\omega}_{1}}$.
 By Lemma \ref{lem:main1} there exist ${x}_{0} \in X$, $Y \in {\[X\]}^{{\omega}_{1}}$, a sequence $\langle {F}_{\alpha}: \alpha < {\omega}_{1}\rangle$, and a function $g: Y \rightarrow \Sa$ satisfying (1)-(3) of Lemma \ref{lem:main1}.
 If $\beta = 0$, then fix $y \in Y$ with $y \geq {x}_{0}$, and put $x = y$ and $B = \{y\}$.
 Then for each $\alpha < {\omega}_{1}$, if $\hgt(y) < \alpha$ and $\bigwedge {F}_{\alpha} \geq g(y)$, then there is ${t}_{\alpha} \in {F}_{\alpha}$ such that $c(\{y, {t}_{\alpha}\}) = 1$.
 Letting $Z = \{{t}_{\alpha}: \alpha > \hgt(y) \ \text{and} \ \bigwedge {F}_{\alpha} \geq  g(y) \}$, we get what we want.

 Assume now that $0 < \beta < {\omega}_{1}$.
 Let $\bar{Y} = g''Y$.
 Fix $R \subset {\cone}_{\bar{Y}}({x}_{0})$, a Suslin tree.
 For each $s \in R$ choose ${y}_{s} \in Y$ such that $g({y}_{s}) = s$.
 Observe that if $s < t$, then ${y}_{s}$ and ${y}_{t}$ are comparable and different.
 Define $d: {R}^{\[2\]} \rightarrow 2$ by $d(\{s, t\}) = 1$ iff ${y}_{s} < {y}_{t}$ and $c(\{{y}_{s}, {y}_{t}\}) = 1$, for any $s, t \in R$ with $s < t$.
 We claim that it is enough to find $u \in R$ and $\bar{B} \subset {\pred}_{R}(u)$ with $\otp(\bar{B}) = {\omega}^{\beta}$ such that ${\bar{B}}^{\[2\]} \subset {K}_{1, d}$.
 Indeed, suppose this can be done.
 Choose any $x \in Y$ with $x \geq u$
 Let ${B}^{\ast} = \{{y}_{s}: s \in \bar{B}\}$.
 For any $s, t \in \bar{B}$ with $s < t$, ${y}_{s} < {y}_{t}$ because $d(\{s, t\}) = 1$.
 So $\otp({B}^{\ast}) = {\omega}^{\beta}$.
 Also it is clear that ${B}^{\ast} \subset {\pred}_{X}(x)$.
 If $\alpha < {\omega}_{1}$ is such that $\alpha > \hgt(x)$ and $\bigwedge {F}_{\alpha} \geq x$, then for any $y \in {B}^{\ast}$, $g(y) \leq \bigwedge {F}_{\alpha}$, and so there is $t \in {F}_{\alpha}$ such that $c(\{y, t\}) = 1$.
 Therefore, letting ${\U}_{\beta}({B}^{\ast})$ be as in Lemma \ref{lem:ultrafilter} (note that $\PFA(\Sa)$ implies $\p > {\omega}_{1}$; so Lemma \ref{lem:ultrafilter} may be applied) and letting $I = \{\alpha < {\omega}_{1}: \alpha > \hgt(x) \ \text{and} \ \bigwedge {F}_{\alpha} \geq x\}$, for each $\alpha \in I$, there is ${Y}_{\alpha} \in {\U}_{\beta}({B}^{\ast})$ and ${t}_{\alpha} \in {F}_{\alpha}$ such that $\forall y \in {Y}_{\alpha}\[c(\{y, {t}_{\alpha}\}) = 1\]$.
 There exists $J \in {\[I\]}^{{\omega}_{1}}$ such that $\otp\left({\bigcap}_{\alpha \in J}{{Y}_{\alpha}}\right) = {\omega}^{\beta}$.
 It is clear that $B = {\bigcap}_{\alpha \in J}{{Y}_{\alpha}}$, $Z = \{{t}_{\alpha}: \alpha \in J\}$, and $x$ are as needed.

 Thus we may concentrate on finding $u \in R$ and $\bar{B} \subset {\pred}_{R}(u)$ as above.
 We will apply the notation of Definitions \ref{def:sigma}, \ref{def:sigma1}, and \ref{def:sigma2}, and Lemmas \ref{lem:sigma} and \ref{lem:sigma1} to $R$, $d$, and $\beta$.
 If there exists $u \in R$ such that ${n}_{\beta, u} = \omega$, then letting $\bar{B} = {A}_{\beta, u}$ works.
 Thus assume that for each $u \in R$, ${n}_{\beta, u} < \omega$.
 Then for each $u \in R \setminus \{\min(R)\}$, there is $f(u) \in R$ such that $f(u) < u$ and ${A}_{\beta, u} \subset {\pred}_{R}(f(u))$.
 So by Lemma \ref{lem:regressive} and the pigeonhole principle there exist $U \in {\[R \setminus \{\min(R)\}\]}^{{\omega}_{1}}$, $s \in R$, and $n \in \omega$ such that $\forall u \in U \[f(u) = s \ \text{and} \ {n}_{\beta, u} = n\]$.
 Fix $x \in U$ such that $U$ is dense above $x$ in $\Sa$.
 Since ${n}_{\beta, x} < \omega$, ${X}_{\beta, x}$ is uncountable.
 Choose $u \in {X}_{\beta, x}$ such that ${X}_{\beta, x}$ is dense above $u$ in $\Sa$.
 Apply the inductive hypothesis to ${c}_{\beta}(n)$, $R$, and $d$.
 Suppose that the first alternative holds.
 Let $F \in {\[R\]}^{{\omega}_{1}}$ and ${g}^{\ast}: F \rightarrow \Sa$ be such that $\forall t \in F \[{g}^{\ast}(t) \geq t\]$ and ${F}^{\[2\]}_{{g}^{\ast}} \subset {K}_{0, d}$.
 Let ${Y}^{\ast} = \{{y}_{t}: t \in F\}$ and define $h: {Y}^{\ast} \rightarrow \Sa$ by $h(y) = {g}^{\ast}(g(y))$, for each $y \in {Y}^{\ast}$.
 Then ${Y}^{\ast} \in {\[T\]}^{{\omega}_{1}}$, $\forall y \in {Y}^{\ast}\[h(y) \geq y\]$, and ${{Y}^{\ast}}^{\[2\]}_{h} \subset {K}_{0, c}$.
 This contradicts the hypothesis that the first alternative fails for $\beta$, $T$, and $c$.
 So the second alternative must hold for ${c}_{\beta}(n)$, $R$, and $d$.
 Since $V = {\cone}_{{X}_{\beta, x}}(u) \in {\[R\]}^{{\omega}_{1}}$, we can find $v \in V$, $B \subset {\pred}_{V}(v)$, and $W \in {\[{\cone}_{V}(v)\]}^{{\omega}_{1}}$ such that $\otp(B) = {\omega}^{{c}_{\beta}(n)}$, and ${B}^{\[2\]} \cup \left(B \otimes W \right) \subset {K}_{1, d}$.
 Fix $w \in W$ such that $W$ is dense above $w$ in $\Sa$.
 Choose $y \in U$ with $y > w$.
 Note that ${X}_{\beta, x} \cap {\cone}_{R}(y)$ is uncountable.
 Furthermore, as $y \in U$ $f(y) = s < x$ and ${A}_{\beta, y} \subset {\pred}_{R}(s)$.
 Therefore, Lemma \ref{lem:sigma} applies and implies that ${\sigma}_{\beta, x} =  {\sigma}_{\beta, y}$.
 In particular, ${A}_{\beta, x} = {A}_{\beta, y}$.
 Also, since $y \in U$, ${n}_{\beta, y} = n$.
 So ${A}_{\beta, y} = {A}_{\beta, y, n}$ and ${\sigma}_{\beta, y}(n) = 0$.
 However ${\cone}_{W}(y)$ is uncountable and ${\cone}_{W}(y) \subset \{z \in {\cone}_{R}(y): \left({A}_{\beta, y, n} \cup B \right) \otimes \{z\} \subset {K}_{1, d}\} \subset \{z \in {\cone}_{R}(y): B \otimes \{z\} \subset {K}_{1, d}\}$. So it is easy to check that $B \in L(\beta, y, n)$ and that $B$ follows ${A}_{\beta, y, n}$ with respect to $y$. However, this contradicts ${\sigma}_{\beta, y}(n) = 0$, finishing the proof.
\end{proof}
The reader may conjecture that the stronger form of Theorem \ref{thm:main} in which $g$ is always equal to the identity function holds.
However, our next counterexample shows that this is provably false in $\ZFC$.
This same negative partition relation for non-special trees of cardinality $\c$ with no uncountable chains and with the property that every subset of size $< \c$ is special was proved under the hypothesis that $\p = \c$ by Todorcevic in \cite{posetpartition}.
However, our result below is the first such negative partition relation known to be provable in $\ZFC$.
This result will finish this section.
\begin{Theorem} \label{thm:coloring}
 There is $c: {\Sa}^{\[2\]} \rightarrow 2$ such that
 \begin{enumerate}
  \item
  There is no $X \in {\[\Sa\]}^{{\omega}_{1}}$ such that ${X}^{\[2\]} \subset {K}_{0}$.
  \item
  There is no $s \in \Sa$ and $B \subset {\pred}_{\Sa}(s)$ such that $\otp(B) = \omega + 2$ and ${B}^{\[2\]} \subset {K}_{1}$.
 \end{enumerate}
\end{Theorem}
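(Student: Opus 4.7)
The plan is to exploit the coherence of $\Sa$ by attaching to each $t \in \Sa$ a finite ``fingerprint'' ${F}_{t} \subset \hgt(t)$ satisfying the coherence ${F}_{s} = {F}_{t} \cap \hgt(s)$ whenever $s < t$, and then to color $\{s, t\}$ according to whether ${F}_{t}$ extends beyond $\hgt(s)$. Concretely, I would arrange---using strong homogeneity to pass to an isomorphic copy of $\Sa$ if necessary---that every node of $\Sa$ has finite support in ${\omega}^{< {\omega}_{1}}$, and set ${F}_{t} = \{\xi < \hgt(t) : t(\xi) \neq 0\}$. The identity $s = t \restrict \hgt(s)$ for $s < t$ then yields ${F}_{s} = {F}_{t} \cap \hgt(s)$ automatically. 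Define $c : {\Sa}^{\[2\]} \rightarrow 2$ by $c(\{s, t\}) = 1$ iff ${F}_{t} \cap [\hgt(s), \hgt(t)) \neq 0$, equivalently iff ${F}_{s} \subsetneq {F}_{t}$.

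To verify (1), suppose $X \in {\[\Sa\]}^{{\omega}_{1}}$ has ${X}^{\[2\]} \subset {K}_{0}$. By Lemma \ref{lem:dense} applied with $T = \Sa$, fix $x \in X$ such that $X$ is dense above $x$ in $\Sa$, which forces $X \cap {\cone}_{\Sa}(x)$ to be uncountable. For each $t \in X$ with $t > x$, $c(\{x, t\}) = 0$ forces ${F}_{t} = {F}_{x}$; combined with $t > x$ this uniquely determines $t$ from its height, since $t$ must agree with $x$ on $\hgt(x)$ and must be identically zero on $[\hgt(x), \hgt(t))$. Thus $X \cap {\cone}_{\Sa}(x)$ is contained in a single chain of $\Sa$, contradicting the fact that $\Sa$ has no uncountable chain.

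To verify (2), suppose $B = \{{b}_{0} < {b}_{1} < \dotsb < {b}_{\omega} < {b}_{\omega + 1}\}$ is $1$-homogeneous of order type $\omega + 2$. For each $j \geq 1$, $c(\{{b}_{j - 1}, {b}_{j}\}) = 1$ yields some ${\epsilon}_{j} \in {F}_{{b}_{j}} \cap [\hgt({b}_{j - 1}), \hgt({b}_{j}))$. Since ${b}_{j} < {b}_{\omega}$ we have ${F}_{{b}_{j}} = {F}_{{b}_{\omega}} \cap \hgt({b}_{j}) \subset {F}_{{b}_{\omega}}$, so ${\epsilon}_{j} \in {F}_{{b}_{\omega}}$. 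The intervals $[\hgt({b}_{j - 1}), \hgt({b}_{j}))$ for $j \geq 1$ are pairwise disjoint, so the ${\epsilon}_{j}$ are pairwise distinct; hence ${F}_{{b}_{\omega}}$ is infinite, contradicting its finiteness. The argument uses only ${b}_{0}, \dotsc, {b}_{\omega}$, so it in fact precludes $1$-homogeneous chains of type $\omega + 1$, a strengthening of (2).

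The main technical hurdle is the first step: ensuring that the coherent strongly homogeneous Suslin tree $\Sa$ fixed in Section \ref{sec:notation} can indeed be taken so that its nodes have finite support in ${\omega}^{< {\omega}_{1}}$, or equivalently that the coherent completion of $\Sa$ admits a branch against which the fingerprints ${F}_{t}$ can be defined. This is routine for standard $\ZFC$ constructions of coherent Suslin trees, but must be reconciled with the abstract definition adopted in the paper.
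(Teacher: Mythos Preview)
Your argument rests entirely on the assumption that $\Sa$ can be realized (up to isomorphism) as a subtree of $\omega^{<\omega_1}$ in which every node has finite support---equivalently, that one can coherently assign finite fingerprints $F_t \subset \hgt(t)$ with $F_s = F_t \cap \hgt(s)$ whenever $s < t$, so that a node is recoverable from its fingerprint, its height, and finitely many values. You flag this as a ``technical hurdle,'' but it is the whole difficulty, and it is \emph{not} routine. Such an assignment amounts to choosing a cofinal branch through the coherent closure of $\Sa$ (against which the fingerprints record disagreement), or abstractly to selecting one distinguished immediate successor at every node so that every limit node is eventually reached along distinguished successors. Neither coherence nor strong homogeneity of $\Sa$ supplies this; it is an extra structural hypothesis the paper does not assume and which need not hold for an arbitrary coherent Suslin tree. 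Indeed the paper explicitly remarks after the proof that coherence is nowhere used---only that each node has infinitely many immediate successors---so an argument that leans on something \emph{stronger} than coherence is moving in the wrong direction. A further symptom: your argument would forbid $1$-homogeneous chains already of type $\omega+1$, whereas the paper establishes only $\omega+2$; that apparent strengthening is purchased with the unjustified hypothesis.

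The paper's coloring is completely different. One fixes an arbitrary injection $s \mapsto f_s$ from $\Sa$ into the strictly increasing sequences $\omega^{\uparrow\omega}$, enumerates the immediate successors of each $s$ as $\langle s^+_n : n \in \omega\rangle$, and sets $c(\{s,t\}) = 1$ (for $s < t$) iff $f_t(\Delta(f_s,f_t))$ equals the unique $n$ with $s^+_n \leq t$. Clause~(1) is handled by an elementary-submodel argument: inside a suitable $M$, one shows that any two comparable members $t < u$ of a putative uncountable $0$-homogeneous set lying above $M$ must satisfy $f_t = f_u$, contradicting injectivity. Clause~(2) uses Ramsey's theorem and compactness: one shows $\{f_{s(i)} : i < \omega\}$ has compact closure, takes a complete accumulation point $f$, and then uses that $f_{s(\omega)} \neq f_{s(\omega+1)}$ to pick $k \in \{\omega,\omega+1\}$ with $f_{s(k)} \neq f$, from which a contradiction with $c(\{s(i),s(k)\}) = 1$ follows for suitable $i$. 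Both top elements $s(\omega)$ and $s(\omega+1)$ are genuinely needed here (one of $f_{s(\omega)}, f_{s(\omega+1)}$ might coincide with $f$), which is why the bound is $\omega+2$ and not $\omega+1$.
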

\begin{proof}
 Let ${\omega}^{\uparrow \omega}$ denote $\{f \in \BS: \forall n \in \omega \[f(n) < f(n + 1)\]\}$.
 For $f, g \in \BS$, if $f \neq g$, let $\Delta(f, g)$ denote the least $n \in \omega$ such that $f(n) \neq g(n)$.
 Choose a collection $\{{f}_{s}: s \in \Sa\}$ of ${\aleph}_{1}$-many pairwise distinct elements of ${\omega}^{\uparrow \omega}$.
 For each $s \in \Sa$, let $\{{s}^{+}_{n}: n \in \omega\}$ be a 1-1 enumeration of ${\succc}_{\Sa}(s)$.
 Recall that for each $s \in \Sa$, $\existsinf n \in \omega \[{s}^{\frown}{\langle n \rangle} \in \Sa\]$.
 Now, define $c: {\Sa}^{\[2\]} \rightarrow 2$ as follows.
 For any pair $s, t \in \Sa$, if $s < t$, then there is a unique $n \in \omega$ such that ${s}^{+}_{n} \leq t$.
 If ${f}_{t}(\Delta({f}_{s}, {f}_{t})) = n$, then set $c(\{s, t\}) = 1$.
 Otherwise $c(\{s, t\}) = 0$.
 The first claim will establish (1).
 \begin{Claim} \label{claim:coloring1}
  There is no $X \in {\[\Sa\]}^{{\omega}_{1}}$ such that ${X}^{\[2\]} \subset {K}_{0}$.
 \end{Claim}
 \begin{proof}
 It is possible to deduce this claim from Lemma 5.3 of \cite{partition}.
 However, we will give a self contained proof below.

 Suppose not.
 Fix a counterexample $X$.
 Let $\chi$ be a sufficiently large regular cardinal ($\chi$ could be the cardinal fixed in Definition \ref{def:poset}).
 Let $M \prec H(\chi)$ be countable with $\Sa, \langle {f}_{s}: s \in \Sa\rangle, \langle \langle {s}^{+}_{n}: n \in \omega \rangle: s \in \Sa \rangle, c, X \in M$.
 Let $x \in X \cap M$ be such that $X$ is dense above $x$ in $\Sa$.
 Fix $t \in X \setminus M$ with $x < t$, and fix $u \in X$ with $u > t$.
 We will get a contradiction if we can show that ${f}_{t} = {f}_{u}$.
 To this end, fix $m \in \omega$ and assume that ${f}_{t}(i) = {f}_{u}(i)$, for all $i < m$.
 Let $\sigma = {f}_{t} \restrict m = {f}_{u} \restrict m$.
 Consider any $s \in {\pred}_{X}(t) \cap M$.
 Let $n(s)$ denote the unique $n \in \omega$ such that ${s}^{+}_{n} \leq t$.
 If $\sigma = {f}_{s} \restrict m$ and if $n(s) = {f}_{t}(m)$, then since $c(\{s, t\}) = 0$, it follows that ${f}_{s}(m) = {f}_{t}(m)$.
 Put $n = {f}_{t}(m)$ and ${n}^{\ast} = {f}_{u}(m)$.
 Let
 \begin{align*}
  D = \left\{v \in \Sa: \exists s \in X \[{f}_{s} \restrict m = \sigma \ \text{and} \ {f}_{s}(m) = {n}^{\ast} \ \text{and} \ {s}^{+}_{n} = v\] \right\}.
 \end{align*}
 It is easy to see that $D \in M$.
 Let $L = {\pred}_{\Sa}(t)$.
 Note that ${u}^{+}_{n} \in D$.
 Therefore, $\forall y \in L \cap M \exists v \in D\[y \leq v\]$.
 So by Lemma \ref{lem:submain}, we can find $v \in L \cap M \cap D$.
 Let $s \in X$ be such that ${f}_{s} \restrict m = \sigma$, ${f}_{s}(m) = {n}^{\ast}$ and ${s}^{+}_{n} = v$.
 Then $s < t$ and $s \in M \cap {\pred}_{X}(t)$.
 Since ${s}^{+}_{n} = v \leq t$, $n(s) = n = {f}_{t}(m)$.
 Thus ${f}_{u}(m) = {n}^{\ast} = {f}_{s}(m) = {f}_{t}(m)$.
 So by induction on $m \in \omega$, $\forall m \in \omega \[{f}_{u}(m) = {f}_{t}(m)\]$, which is a contradiction.
 \end{proof}
 We next work toward showing that (2) holds.
 We need a few preliminary claims.
 Aiming for a contradiction, fix $s \in \Sa$ and $B \subset {\pred}_{\Sa}(s)$ such that $\otp(B) = \omega + 2$ and ${B}^{\[2\]} \subset {K}_{1}$.
 For each $i < \omega + 2$, let $s(i)$ denote the $i$-th element of $B$, and for each $i < \omega + 1$, let $n(i)$ be the unique $n \in \omega$ such that ${\left( s(i) \right)}^{+}_{n} \in {\pred}_{\Sa}(s)$.
 \begin{Claim} \label{claim:coloring2}
 There are no infinite $A \subset \omega$ and $n \in \omega$ such that $\forall i \in A \[n(i) = n\]$.
 \end{Claim}
 \begin{proof}
 Suppose not.
 Fix $i, j \in A$ such that $s(i) < s(j)$.
 Then since $c(\{s(i), s(j)\}) = 1$, ${f}_{s(j)}\left(\Delta\left({f}_{s(i)}, {f}_{s(j)}\right)\right) = n(i) = n$.
 As ${f}_{s(j)}$ is strictly increasing, $\Delta\left({f}_{s(i)}, {f}_{s(j)}\right) \leq n$.
 So it is possible to find $i, j, k \in A$  and $m \leq n$ such that $s(i) < s(j) < s(k)$ and
 \begin{align*}
 \Delta\left({f}_{s(i)}, {f}_{s(j)}\right) = \Delta\left({f}_{s(i)}, {f}_{s(k)}\right) = \Delta\left({f}_{s(j)}, {f}_{s(k)}\right) = m.
\end{align*}
However, we now have ${f}_{s(i)} \restrict m = {f}_{s(j)} \restrict m = {f}_{s(k)} \restrict m$, and also that ${f}_{s(j)}(m) = {f}_{s(k)}(m) = n$, which is impossible.
\end{proof}
Let $\cl(B)$ denote the closure (with respect to the usual topology on $\BS$) of $\left\{{f}_{s(i)}: i \in \omega \right\}$.
As $\cl(B)$ is a non-empty closed subset of $\BS$, fix a non-empty pruned subtree $T \subset {\omega}^{< \omega}$ such that $\[T\] = \cl(B)$ (refer to Section \ref{sec:notation} for our notation for subtrees of ${\omega}^{< \omega}$).
Let $\sigma \in T$.
Suppose for a moment that $\existsinf n \in \omega \[{\sigma}^{\frown}{\langle n \rangle} \in T \]$.
Let ${\succc}_{T}(\sigma)$ denote $\{n \in \omega: {\sigma}^{\frown}{\langle n \rangle} \in T\}$ and let $l$ denote $\lc \sigma \rc$.
For each $n \in {\succc}_{T}(\sigma)$, choose $ {i}_{n} \in \omega$ such that ${\sigma}^{\frown}{\langle n \rangle} \subset {f}_{s({i}_{n})}$.
As $B$ is well ordered, by Ramsey's theorem there is $N \in {\[{\succc}_{T}(\sigma)\]}^{\omega}$ such that $\forall m, n \in N\[m < n \implies s({i}_{m}) < s({i}_{n})\]$.
Fix $m, n \in N$ such that $\max\left\{m, n({i}_{m})\right\} <  n$.
However, $\Delta\left( {f}_{s({i}_{m})}, {f}_{s({i}_{n})}\right) = l$ and ${f}_{s({i}_{n})}(l) = n > n({i}_{m})$, contradicting $c(\left\{ s({i}_{m}), s({i}_{n})\right\}) = 1$.
So we conclude that $T$ is finitely branching, and that $\cl(B)$ is a compact subset of $\BS$.

Now, $\left\{{f}_{s(i)}: i \in \omega \right\}$ is an infinite subset of $\cl(B)$.
Let $f \in \cl(B)$ be a complete accumulation point of $\left\{{f}_{s(i)}: i \in \omega \right\}$.
By applying Claim \ref{claim:coloring2} and Ramsey's theorem, it is possible to choose $A \in \cube$ such that for each $i \in A$, $f \neq {f}_{s(i)}$, and for each $i, j \in A$, if $i < j$, then $\Delta \left(f, {f}_{s(i)} \right) < \Delta \left(f, {f}_{s(j)} \right)$ and $n(i) < n(j)$.

Now, since ${f}_{s(\omega + 1)} \neq {f}_{s(\omega)}$, there must be $k \in \{\omega, \omega + 1\}$ such that ${f}_{s(k)} \neq f$.
Fix such a $k \in \{\omega, \omega + 1\}$.
Let $l = {f}_{s(k)}\left(\Delta\left( {f}_{s(k)}, f \right)\right)$.
Choose $i \in A$ such that $\Delta\left( f, {f}_{s(i)}\right) > \Delta\left( {f}_{s(k)}, f \right)$ and $n(i) > l$.
Then $\Delta\left({f}_{s(k)}, {f}_{s(i)}\right) = \Delta\left({f}_{s(k)}, f\right)$.
However, ${f}_{s(k)}\left(\Delta\left({f}_{s(k)}, {f}_{s(i)}\right)\right) = l < n(i)$, contradicting $c(\{s(i), s(k)\}) = 1$.
This contradiction finishes the proof.
\end{proof}
Observe that properties of $\Sa$ such as coherence are not used in the proof of Theorem \ref{thm:coloring}; we only need that every element of $\Sa$ has infinitely many immediate successors in $\Sa$.

It is also worth pointing out the following corollary of the proof of Theorem \ref{thm:coloring} as it shows that while in the model of $\PFA(\Sa)$ the partition relation
${\omega}_{1} \rightarrow~{({\omega}_{1}, \omega + 2)}^{2}$ fails, forcing with the coherent Suslin tree $\Sa$ recuperates it.
\begin{Theorem}
If there is a Suslin tree then ${\omega}_{1} ~\not \rightarrow~{({\omega}_{1}, \omega + 2)}^{2}.$
\end{Theorem}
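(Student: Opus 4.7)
The plan is to reduce directly to Theorem \ref{thm:coloring}, exploiting the remark that follows its proof: only the infinitary branching of $\Sa$ was actually used. So first I would show that from any Suslin tree one may extract a Suslin tree $T$ (in the sense of Definition \ref{def:suslin}) in which every node has infinitely many immediate successors. This is a routine normalization: after pruning to a cofinal subtree where every element has uncountably many successors, one may split nodes via $\omega$-many pairwise incomparable extensions at the next level (or, equivalently, pass to $T \times \omega$ with the lexicographic tree order, which is again Suslin). The resulting $T$ meets all the requirements of Definition \ref{def:suslin} and in addition has infinitely many immediate successors at every node.

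With $T$ in hand, I would rerun the construction in the proof of Theorem \ref{thm:coloring}, substituting $T$ for $\Sa$, to produce a coloring $d \colon {T}^{\[2\]} \to 2$ such that no uncountable $Y \in {\[T\]}^{{\omega}_{1}}$ satisfies ${Y}^{\[2\]} \subset {K}_{0,d}$ and no chain $B \subset {\pred}_{T}(s)$ of order type $\omega + 2$ satisfies ${B}^{\[2\]} \subset {K}_{1,d}$. This is a verbatim repetition of the earlier argument; coherence and strong homogeneity of $\Sa$ played no role, and Lemma \ref{lem:dense} and its consequences are purely about Suslin trees in the sense of Definition \ref{def:suslin}.

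Next, fix any bijective enumeration $\langle {s}_{\alpha} : \alpha < {\omega}_{1}\rangle$ of $T$, and define $c \colon {\[{\omega}_{1}\]}^{2} \to 2$ by setting $c(\{\alpha, \beta\}) = d(\{{s}_{\alpha}, {s}_{\beta}\})$ when ${s}_{\alpha}$ and ${s}_{\beta}$ are comparable in $T$, and $c(\{\alpha, \beta\}) = 0$ otherwise. To see that $c$ witnesses ${\omega}_{1} \not\rightarrow {({\omega}_{1}, \omega + 2)}^{2}$: given any uncountable $0$-homogeneous $X \subset {\omega}_{1}$, the set $Y = \{{s}_{\alpha} : \alpha \in X\}$ lies in ${\[T\]}^{{\omega}_{1}}$, and by construction every comparable pair in $Y$ has $d$-color $0$, i.e., ${Y}^{\[2\]} \subset {K}_{0,d}$, contradicting the first clause satisfied by $d$. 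Given any $1$-homogeneous $X \subset {\omega}_{1}$ of order type $\omega + 2$, every pair of elements indexed by $X$ must be comparable in $T$ with $d$-color $1$; hence $Y = \{{s}_{\alpha} : \alpha \in X\}$ is a chain of cardinality $\omega + 2$, automatically well-ordered of type $\omega + 2$, and selecting $s \in T$ strictly above $\max Y$ (which exists by clause (2) of Definition \ref{def:suslin}) yields $Y \subset {\pred}_{T}(s)$, contradicting the second clause.

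The main concern is the initial normalization producing a Suslin tree with infinitely many immediate successors at every node, but this is a standard combinatorial exercise that does not use anything beyond the existence of a Suslin tree. A secondary point worth verifying is that the proof of Theorem \ref{thm:coloring} actually transfers to an arbitrary such $T$ in place of $\Sa$; inspection of that argument confirms that each appearance of $\Sa$ can be replaced by $T$ without change, since the only structural properties invoked are Suslinness and infinitary branching.
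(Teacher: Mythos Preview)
Your approach is essentially the one the paper intends: the theorem is announced as a corollary of the proof of Theorem \ref{thm:coloring}, and beyond that proof the only work is normalizing an arbitrary Suslin tree to one that is well-pruned and infinitely branching, and then transferring the tree coloring to ${\[{\omega}_{1}\]}^{2}$.

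There is, however, one genuine slip in your transfer step. You assert that if $X \subset {\omega}_{1}$ is $1$-homogeneous of type $\omega + 2$, then $Y = \{{s}_{\alpha} : \alpha \in X\}$ is ``a chain of cardinality $\omega + 2$, automatically well-ordered of type $\omega + 2$.'' But $\omega + 2$ is not a cardinal; all you know is that $Y$ is a countably infinite chain, and under an \emph{arbitrary} bijective enumeration its tree-order type could perfectly well be $\omega$ or $\omega + 1$. In that case you cannot produce a $1$-homogeneous $B$ of type $\omega + 2$ sitting below a single node, and the contradiction with clause (2) for $d$ does not materialize. The repair is immediate: choose the enumeration $\langle {s}_{\alpha} : \alpha < {\omega}_{1}\rangle$ so that $\hgt({s}_{\alpha}) < \hgt({s}_{\beta})$ implies $\alpha < \beta$ (possible since each level of $T$ is countable). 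Then ${s}_{\alpha} < {s}_{\beta}$ in $T$ forces $\alpha < \beta$, so the tree order on $Y$ coincides with the ordinal order on $X$, giving $\otp(Y) = \omega + 2$ and letting the rest of your argument go through.
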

Although Theorem \ref{thm:main} shows that the statement that ${\omega}_{1} \rightarrow {({\omega}_{1}, \alpha)}^{2}$ for all $\alpha < {\omega}_{1}$ is not equivalent to $\p > {\omega}_{1}$ over $\ZFC + \PID$, it does not give much further information.
\begin{prob} \label{p:partition}
 Find a cardinal invariant $\x$ so that the statement that ${\omega}_{1}~\rightarrow~{({\omega}_{1}, \alpha)}^{2}$ for all $\alpha < {\omega}_{1}$ is equivalent to $\x > {\omega}_{1}$ over $\PID$.
\end{prob}
\noindent The following result that comes from Theorem \ref{beomegaone} and the proof of Theorem \ref{omegaone} above gives a partial answer to this problem.
\begin{Theorem}[Todorcevic]
Assume $\PID$. The following are equivalent.
\begin{enumerate}
\item[(1)] $\b > {\omega}_{1}$.
\item[(2)] ${\omega}_{1}\rightarrow~{({\omega}_{1}, \omega + 2)}^{2}$.
\end{enumerate}
\end{Theorem}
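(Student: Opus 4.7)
The plan is to treat the two implications separately, invoking the two previous results on ${\omega}_{1} \rightarrow {({\omega}_{1}, \alpha)}^{2}$ recorded in the paper. The direction $(2) \Rightarrow (1)$ is the immediate contrapositive of Theorem \ref{beomegaone}: that $\ZFC$ result produces, from $\b = {\omega}_{1}$, a coloring $c : {\[{\omega}_{1}\]}^{2} \rightarrow 2$ witnessing ${\omega}_{1} \not\rightarrow {({\omega}_{1}, \omega + 2)}^{2}$. Hence ${\omega}_{1} \rightarrow {({\omega}_{1}, \omega + 2)}^{2}$ forces $\b > {\omega}_{1}$ outright; $\PID$ is not needed for this direction.

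For $(1) \Rightarrow (2)$, assume $\PID$ together with $\b > {\omega}_{1}$ and fix $c : {\[{\omega}_{1}\]}^{2} \rightarrow 2$ with no uncountable $0$-homogeneous set. The idea is to specialize Todorcevic's proof of Theorem \ref{omegaone} to the case $\alpha = \omega + 2$, and to verify that for this particular small $\alpha$ the hypothesis $\p > {\omega}_{1}$ used in that proof can be weakened to $\b > {\omega}_{1}$. The $\PID$ portion of the proof is insensitive to the target order type: it delivers an uncountable $T \subseteq {\omega}_{1}$ together with, for each $x \in T$, an infinite set ${B}_{x} \subseteq T \cap x$ enumerated increasingly as $\langle {b}^{x}_{n} : n < \omega \rangle$, such that ${B}_{x} \otimes \{x\} \subseteq {K}_{1}$ and ${{B}_{x}}^{\[2\]} \subseteq {K}_{1}$. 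In other words, every $x \in T$ caps a $1$-homogeneous $\omega$-tower inside $T$. What remains is to find $x < y < z$ in $T$ and an infinite $M \subseteq \omega$ such that $\{{b}^{x}_{n} : n \in M\} \cup \{y, z\}$ is $1$-homogeneous.

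The key observation is that since only a single $\omega$-sequence needs to be isolated, the argument never calls for a common pseudo-intersection of ${\omega}_{1}$-many members of an ultrafilter on $\omega$, which is precisely the step in Todorcevic's proof that would require $\p > {\omega}_{1}$ via Lemma \ref{lem:ultrafilter}. Instead, for each candidate pair $y < z$ in $T$ with $y, z > x$, one associates a function ${h}^{x}_{y,z} \in {\omega}^{\omega}$ recording the level above which every ${K}_{0}$-edge from $\{y, z\}$ down into ${B}_{x}$ can be avoided, and the task reduces to producing one strictly increasing enumeration of $\omega$ not eventually dominated by any ${h}^{x}_{y,z}$ (along with a residual pigeonhole to coordinate the two top points). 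That is exactly the defining property of $\b > {\omega}_{1}$. The main obstacle is to implement this reduction so that the ${\omega}_{1}$-many functions one must escape genuinely encode all obstructions to $1$-homogeneity across all candidate top pairs simultaneously; once this is arranged, an unbounded family of such enumerations yields the required infinite set $M$ together with the pair $y, z$, completing the argument.
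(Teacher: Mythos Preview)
Your high-level plan matches the paper's: the paper gives no detailed proof, only the remark that the result ``comes from Theorem~\ref{beomegaone} and the proof of Theorem~\ref{omegaone}.'' Your treatment of $(2)\Rightarrow(1)$ is correct and complete.

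Your sketch of $(1)\Rightarrow(2)$, however, misidentifies what the $\PID$ step produces and therefore has a real gap. The claim that $\PID$ delivers, for each $x$ in some uncountable $T$, a $1$-homogeneous $\omega$-tower $B_x\subseteq T\cap x$ with $B_x\otimes\{x\}\subseteq K_1$, is not the output of the $\PID$ step in Todorcevic's argument; that configuration is essentially the \emph{conclusion} of the first nontrivial inductive stage (compare the inductive statement in the proof of Theorem~\ref{thm:main}), and reaching it is precisely where Lemma~\ref{lem:ultrafilter}, and hence $\p>\omega_1$, is invoked. Your proposed bypass via the functions $h^x_{y,z}$ is then ill-posed: $h^x_{y,z}$ is supposed to record a level above which no $K_0$-edges from $\{y,z\}$ into $B_x$ occur, but nothing you have established rules out that infinitely many $b^x_n$ satisfy $c(\{b^x_n,y\})=0$, in which case no such level exists and there is nothing for a bounding-number argument to act on. To carry out $(1)\Rightarrow(2)$ one must actually enter Todorcevic's proof, locate where $\p>\omega_1$ is used (the pseudo-intersection in Lemma~\ref{lem:ultrafilter} and its analogue), and argue that for the target $\omega+2$ this step can be replaced by one requiring only $\b>\omega_1$; the paper's own remark that ``such $\alpha$ is quite large'' indicates this analysis in fact goes well beyond $\omega+2$.
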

\noindent This leads us to the following version of Problem \ref{p:partition}.
\begin{prob} \label{p:partition2}
 Is the statement that ${\omega}_{1}~\rightarrow~{({\omega}_{1}, \alpha)}^{2}$ for all $\alpha < {\omega}_{1}$ equivalent to $\mathfrak{b} > {\omega}_{1}$ over $\PID$?
\end{prob}
\noindent Note that if, under $\PID,$ the cardinal invariant inequality $\b > {\omega}_{1}$ does not correspond to ${\omega}_{1}\rightarrow~{({\omega}_{1}, \alpha)}^{2}$, there must be a minimal $\alpha < {\omega}_{1}$ with this property. The proof of Theorem \ref{omegaone} actually shows that such $\alpha$ is quite large.
\section{$\PID$ and five cofinal types} \label{sec:tukey}
Recall that by Theorem \ref{thm:five} above, under $\PID + \p > {\omega}_{1}$, $1$, $\omega$, ${\omega}_{1}$, $\omega \times {\omega}_{1}$, and ${\[{\omega}_{1}\]}^{< \omega}$ are the only cofinal types of directed sets of size at most ${\aleph}_{1}$.
In this section we find a cardinal invariant which captures, in the sense described in the introduction, this statement.
This cardinal invariant is not one of the naturally occurring ones.
Rather it is the minimum of two cardinal invariants, the well-known bounding number $\b$ and another cardinal (see Definition \ref{def:newcardinal} below) which has not been investigated as throughly.
\begin{Def} \label{def:pseudo}
 Let $\langle D, \leq \rangle$ be a directed set and suppose $X \subset D$.
 We say that $X$ is \emph{pseudobounded} if $\forall A \in {\[X\]}^{\omega} \exists B \in {\[A\]}^{\omega}\[B \ \text{is bounded in} \ D\]$.
 $D$ is said to be \emph{$\sigma$-pseudobounded} if $D = {\bigcup}_{n \in
\omega}{{X}_{n}}$, where for each $n \in \omega$, ${X}_{n}$ is pseudobounded in
$D$.
\end{Def}
\begin{Lemma} \label{lem:idealnotpseudo}
 Let $\I$ be a tall ideal on $\omega$ such that $\langle \I, \subset \rangle$ is
$\sigma$-pseudobounded.
 Then ${\[{\omega}_{1}\]}^{< \omega} {\not\leq}_{T} \langle \I, \subset \rangle$
and $\langle \I, \subset \rangle \; {\not\leq}_{T} \; \omega \times
{\omega}_{1}$.
\end{Lemma}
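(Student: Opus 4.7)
Both non-reducibility claims will be proved by contradiction, by assuming a Tukey reduction exists and then exploiting the $\sigma$-pseudoboundedness of $\I$ (and tallness, for the second claim) to produce a contradiction.

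For the first part, my plan is first to observe that $\sigma$-pseudoboundedness is preserved under Tukey reducibility: if $f : D \to E$ is Tukey and $E = \bigcup_n X_n$ with each $X_n$ pseudobounded, then each $f^{-1}[X_n]$ is pseudobounded --- given a countable $A \subset f^{-1}[X_n]$, push $A$ into $X_n$, extract an infinite bounded subfamily of $f[A]$, lift back by choosing preimages, and apply the Tukey property that preimages of bounded sets are bounded. Thus a Tukey map $[\omega_1]^{<\omega} \to \I$ would make $[\omega_1]^{<\omega}$ itself $\sigma$-pseudobounded. But that is false: in any decomposition $[\omega_1]^{<\omega} = \bigcup_n Z_n$, pigeonhole on the singletons $\{\{\alpha\} : \alpha < \omega_1\}$ puts uncountably many of them into some $Z_n$, and any countable distinct selection yields a countable subfamily of $Z_n$ every infinite subfamily of which has infinite union, hence is unbounded in $[\omega_1]^{<\omega}$ --- the required contradiction.

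For the second part, assume $f = (f_0,f_1) : \I \to \omega \times \omega_1$ is Tukey and set $Y_n = \{a \in \I : f_0(a) \leq n\}$. The crucial structural observation is that each $Y_n$ is \emph{strongly} $\omega$-bounded in $\I$: for any countable $\{a_k\} \subset Y_n$, the set $\{f_1(a_k)\}$ is a countable subset of $\omega_1$, hence bounded below some $\alpha^* < \omega_1$, so $f[\{a_k\}] \subset \{0,\dots,n\} \times [0,\alpha^*]$ is bounded in $\omega \times \omega_1$ and the Tukey property yields $\bigcup_k a_k \in \I$. Hence $\I = \bigcup_n Y_n$, with each $Y_n$ closed under countable unions inside $\I$. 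To derive the contradiction, my plan is to use tallness and $\sigma$-pseudoboundedness to construct an $\omega_1$-sequence $\{B_\alpha : \alpha < \omega_1\} \subset \I$ of infinite sets, each chosen so as to escape the pseudo-union of (an infinite subfamily of) the prior $B_\beta$'s: at stage $\alpha$ apply $\sigma$-pseudoboundedness to $\{B_\beta : \beta < \alpha\}$ to obtain an $\I$-element $W_\alpha$ bounding an infinite subfamily, and use tallness on $\omega \setminus W_\alpha$ to pick $B_\alpha \in [\omega \setminus W_\alpha]^\omega \cap \I$. A pigeonhole will then put uncountably many $B_\alpha$'s into a single $Y_n$; the escape property will be arranged so that some countable subfamily of these has union outside $\I$, contradicting the strong $\omega$-boundedness of $Y_n$.

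The main obstacle I expect is the recursive bookkeeping in the $\omega_1$-stage construction: ensuring that at the final pigeonhole step one can extract a countable subfamily whose union genuinely lies outside $\I$. This will require an interleaved enumeration --- simultaneously constructing the $B_\alpha$'s and enumerating an exhaustive list $\{c_\beta : \beta < \omega_1\}$ of $\I$-elements (including the eventual union targets) --- so that at stage $\alpha$ the set $B_\alpha$ can be forced to escape $c_\beta$ for all $\beta < \alpha$. Tallness is the indispensable ingredient throughout, guaranteeing at every stage that an infinite member of $\I$ disjoint from any prescribed $\I$-element exists and so keeping the recursion feasible.
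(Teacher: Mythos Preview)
Your treatment of the first claim is fine and matches the paper's one-line remark: $\sigma$-pseudoboundedness passes down along Tukey maps, and $[\omega_1]^{<\omega}$ is not $\sigma$-pseudobounded.

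For the second claim your opening move is good and genuinely different from the paper: from a Tukey map $f=(f_0,f_1):\I\to\omega\times\omega_1$ you correctly extract pieces $Y_n=\{a:f_0(a)\le n\}$ with the strong property that \emph{every} countable subfamily of $Y_n$ is bounded in $\I$. But the $\omega_1$-length recursion you then propose is both unnecessary and gappy. The gap is exactly where you flag the ``main obstacle'': tallness lets you find an infinite $B\in\I$ disjoint from \emph{one} prescribed $\I$-element, but at stage $\alpha$ you need $B_\alpha$ to escape \emph{all} of the countably many $c_\beta$ with $\beta<\alpha$, and you never say how. (Your appeal to $\sigma$-pseudoboundedness here is a red herring; it plays no role in this half of the lemma.) You also silently need $\cof(\I)\le\omega_1$ to make the enumeration $\{c_\beta:\beta<\omega_1\}$ cofinal rather than merely ``exhaustive''.

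There is a much quicker finish from your own setup. Close each $Y_n$ downward in $\I$ (this preserves countable boundedness) and set $U_n=\bigcup Y_n$. For $k\in U_n$ the singleton $\{k\}$ lies in $Y_n$, so $\{\{k\}:k\in U_n\}$ is a countable subfamily of $Y_n$, hence bounded; thus $U_n\in\I$. Since $a\in Y_n$ implies $a\subset U_n$, the sequence $\langle U_n\rangle$ is a countable cofinal chain in $\I$. Now pick $k_n\in\omega\setminus U_n$ pairwise distinct; by tallness some infinite $B\subset\{k_n:n<\omega\}$ lies in $\I$, hence $B\subset U_m$ for some $m$, forcing $B\subset\{k_0,\dots,k_{m-1}\}$ --- contradiction.

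The paper takes a different and even shorter route: it works with a \emph{convergent} map $g:\omega\times\omega_1\to\I$, writes $X(n,\alpha)=g(n,\alpha)$, uses properness to find for each $n$ a $k_n$ missed by uncountably many $X(n,\alpha)$, uses tallness to put $\{k_n:n\in A\}$ into $\I$ for some infinite $A$, and then cofinality of the image yields an $X(n,\alpha)\supset\{k_m:m\in A\}$ with $k_n\notin X(n,\alpha)$. No recursion, no $\omega_1$-bookkeeping.
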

\begin{proof}
 It is clear that ${\[{\omega}_{1}\]}^{< \omega} \; {\not\leq}_{T} \; \langle
\I, \subset \rangle$ because $\I$ is $\sigma$-pseudobounded.
 Suppose for a contradiction that $\langle \I, \subset \rangle \; {\leq}_{T} \;
\omega \times {\omega}_{1}$.
 Then there exists $\{X(n, \alpha): n < \omega \wedge \alpha < {\omega}_{1}\}
\subset \I$ such that for any $A \in \cube$ and $\{{X}_{n}: n \in A\} \subset
{\[{\omega}_{1}\]}^{{\omega}_{1}}$, $\{X(n, \alpha): n \in A \wedge \alpha \in
{X}_{n}\}$ is cofinal in $\I$.
 First of all since $\I$ is proper, for each $n \in \omega$, there must be
${k}_{n} \in \omega$ and ${X}_{n} \in {\[{\omega}_{1}\]}^{{\omega}_{1}}$ such that
$\forall \alpha \in {X}_{n}\[{k}_{n} \notin X(n, \alpha)\]$.
 Since $\I$ is tall there is $A \in \cube$ such that $\{{k}_{n}: n \in A \} \in
\I$ (either $\{{k}_{n}: n \in \omega\}$ is finite, in which case $A = \omega$
works, or else use tallness).
 Now $\{X(n, \alpha): n \in A \wedge \alpha \in {X}_{n}\}$ is cofinal in $\I$.
 So there is $n \in A$ and $\alpha \in {X}_{n}$ such that $\{{k}_{m}: m \in A\} \subset X(n, \alpha)$. However, ${k}_{n} \notin X(n, \alpha)$.
\end{proof}
For any directed set $D$, and $X$ a directed cofinal subset of $D$, $D \;
{\equiv}_{T} \; X$.
Therefore, if $\I$ is a tall $\sigma$-pseudobounded ideal on $\omega$ and $X
\subset \I$ is cofinal, directed, and has size at most ${\omega}_{1}$, then
$\langle X, \subset \rangle$ is not Tukey equivalent to any of $1$, $\omega$,
${\omega}_{1}$, $\omega \times {\omega}_{1}$, ${\[{\omega}_{1}\]}^{< \omega}$.
\begin{Def} \label{def:newcardinal}
 $\cof({\F}_{\sigma})$ is the least $\kappa$ such that there exists a tall,
$\sigma$-pseudobounded ${F}_{\sigma}$ ideal $\I$ on $\omega$ and a directed
cofinal $X \subset \I$ such that $\lc X \rc = \kappa$.
\end{Def}
It is clear that ${\omega}_{1} \leq \cof({\F}_{\sigma}) \leq \c$.
It is also easy to see that $\cov(\M) \leq \cof({\F}_{\sigma})$.
Later in this section we will prove that $\b$ and $\cof({\F}_{\sigma})$ are
independent, even assuming $\PID$.
We do not know whether the same cardinal invariant is obtained if the requirement that $\I$ be $\sigma$-pseudobounded is dropped from the definition of $\cof({\F}_{\sigma})$.
This is closely related to the well-known question of whether every tall ${F}_{\sigma}$ ideal on $\omega$ is either $\sigma$-pseudobounded or Tukey equivalent to ${\[\c\]}^{< \omega}$.
\begin{conj} \label{conj:1}
 Let ${\cof}^{\ast}({\F}_{\sigma})$ be the least $\kappa$ such that there exists a tall ${F}_{\sigma}$ ideal $\I$ on $\omega$ and a directed cofinal $X \subset \I$ such that $\lc X \rc = \kappa$.
 Then $\cof({\F}_{\sigma}) = {\cof}^{\ast}({\F}_{\sigma})$.
\end{conj}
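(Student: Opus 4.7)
The easy half, ${\cof}^{\ast}({\F}_{\sigma}) \leq \cof({\F}_{\sigma})$, is immediate since every tall $\sigma$-pseudobounded $F_{\sigma}$ ideal is a fortiori a tall $F_{\sigma}$ ideal. The content of the conjecture is the reverse inequality, and the plan is to derive it from the Tukey dichotomy that the authors highlight just above the conjecture: every tall $F_{\sigma}$ ideal $\I$ on $\omega$ is either $\sigma$-pseudobounded or satisfies ${\[\c\]}^{<\omega} \; {\leq}_{T} \; \I$.

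Granting this dichotomy, the conjecture falls out quickly. Let $\kappa = {\cof}^{\ast}({\F}_{\sigma})$ and fix a tall $F_{\sigma}$ ideal $\I$ together with a directed cofinal $X \subset \I$ of size $\kappa$ witnessing it. If $\I$ is $\sigma$-pseudobounded, then $\I$ itself witnesses $\cof({\F}_{\sigma}) \leq \kappa$. Otherwise the dichotomy delivers ${\[\c\]}^{<\omega} \; {\leq}_{T} \; \I$, which forces every cofinal subset of $\I$ to have cardinality at least $\c$, so $\kappa \geq \c$; and since $\cof({\F}_{\sigma}) \leq \c$ holds unconditionally, again $\cof({\F}_{\sigma}) \leq \kappa$. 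So it suffices to prove the dichotomy.

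For this, the natural plan is to invoke the Mazur representation and write $\I = {\I}_{\phi} = {\bigcup}_{n \in \omega}{X}_{n}$ with $X_{n} = \{A \subset \omega : \phi(A) \leq n\}$ for a lower semicontinuous submeasure $\phi$. If every $X_{n}$ is pseudobounded in ${\I}_{\phi}$, we are done. Otherwise fix $n$ and an infinite sequence $\{{A}_{k} : k < \omega\} \subset X_{n}$ no infinite subfamily of which unions into an element of ${\I}_{\phi}$; after a $\Delta$-system reduction and a Ramsey argument we may assume the ${A}_{k}$ are pairwise disjoint with $\phi({A}_{k})$ bounded below by some $\epsilon > 0$. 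Using this bad sequence together with an almost disjoint family $\{{D}_{\alpha} : \alpha < \c\}$ on $\omega$, the plan is to construct a convergent map $g : {\I}_{\phi} \rightarrow {\[\c\]}^{<\omega}$ by choosing sufficiently thin ${A}^{\prime}_{k} \subset {A}_{k}$, setting ${B}_{\alpha} = {\bigcup}_{k \in {D}_{\alpha}}{{A}^{\prime}_{k}}$, arranging ${B}_{\alpha} \in {\I}_{\phi}$, and letting $g(B)$ record the finite set of $\alpha$ with ${B}_{\alpha} \; {\subset}^{\ast} \; B$.

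The main obstacle is calibrating the thinning so that $g$ is genuinely convergent: the ${A}^{\prime}_{k}$ must be sparse enough that every $B \in {\I}_{\phi}$ captures only finitely many ${B}_{\alpha}$ modulo finite (so that $g(B)$ really lands in ${\[\c\]}^{<\omega}$), while for every finite $F \subset \c$ the union ${\bigcup}_{\alpha \in F}{B}_{\alpha}$ must remain in ${\I}_{\phi}$ and genuinely contain each ${B}_{\alpha}$ for $\alpha \in F$ up to finite error. Balancing these two constraints against the $\phi$-submeasure and the almost disjoint structure on $\omega$ is precisely where the well-known open problem sits, so a full resolution along these lines will probably require genuinely new ideas in the theory of $F_{\sigma}$ ideals. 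A more modest fallback, in the spirit of the rest of the paper, would be to add $\PID$ to the hypotheses and try to extract the required $\sigma$-pseudobounded sub-ideal of cofinality at most $\kappa$ directly from the given cofinal family $X$ by a reflection-type inductive construction that exploits the $P$-ideal dichotomy to avoid constructing the Tukey embedding of ${\[\c\]}^{<\omega}$ by hand.
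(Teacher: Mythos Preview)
There is nothing in the paper to compare your attempt against: the statement is labelled \emph{Conjecture} and the paragraph preceding it says explicitly ``We do not know whether the same cardinal invariant is obtained if the requirement that $\I$ be $\sigma$-pseudobounded is dropped.'' The authors then remark that this is closely related to the well-known open question of whether every tall $F_{\sigma}$ ideal is either $\sigma$-pseudobounded or Tukey equivalent to ${[\c]}^{<\omega}$. They offer no proof and no further strategy.

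Your reduction of the conjecture to that Tukey dichotomy is correct and is exactly the connection the authors are pointing to. The argument that ${[\c]}^{<\omega} \;{\leq}_{T}\; \I$ forces any cofinal $X \subset \I$ to have size at least $\c$ is fine (a convergent map sends $X$ onto a cofinal subset of ${[\c]}^{<\omega}$, and such a set must have size $\c$). So the first two paragraphs of your proposal constitute a clean, valid derivation of the conjecture from the dichotomy.

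But from that point on you are not proving anything; you are sketching a plan of attack on a problem that you yourself, at the end, correctly identify as open. The Mazur representation and the ``bad sequence plus almost disjoint family'' construction you outline is the standard first attempt, and the obstacle you name---arranging the thinned sets $A'_{k}$ so that each $B_{\alpha}$ lies in $\I_{\phi}$ while no $B \in \I_{\phi}$ almost contains infinitely many $B_{\alpha}$---is exactly where all known attempts stall. Your fallback suggestion of adding $\PID$ is also only a heuristic; nothing in the paper or in your sketch indicates how $\PID$ would produce a $\sigma$-pseudobounded witness of the right cofinality. In short: your proposal is a correct reformulation together with an honest acknowledgement that the core step is unresolved, not a proof.
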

\begin{Theorem} \label{thm:5yes}
Assume $\PID$.
The following are equivalent.
  \begin{enumerate}
   \item
    $\min\{\b, \cof({\F}_{\sigma})\} > {\omega}_{1}$.
   \item
    $1$, $\omega$, ${\omega}_{1}$, $\omega \times {\omega}_{1}$, and
${\[{\omega}_{1}\]}^{< \omega}$ are the only cofinal types of directed sets of
size at most ${\aleph}_{1}$.
  \end{enumerate}
\end{Theorem}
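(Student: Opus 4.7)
The plan is to prove the two implications separately, both under the standing assumption of $\PID$.

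For the direction (2) $\Rightarrow$ (1), I argue contrapositively in two sub-cases. If $\cof({\F}_{\sigma}) = {\aleph}_{1}$, Definition \ref{def:newcardinal} yields a tall $\sigma$-pseudobounded ${F}_{\sigma}$ ideal $\I$ on $\omega$ equipped with a directed cofinal subset $X \subset \I$ of size ${\aleph}_{1}$, so $X \; {\equiv}_{T} \; \I$. Lemma \ref{lem:idealnotpseudo} shows $\I$ is not Tukey-equivalent to $\omega \times {\omega}_{1}$ or ${\[{\omega}_{1}\]}^{< \omega}$; uncountability rules out $1$ and $\omega$; and the type ${\omega}_{1}$ is ruled out by a $\PID$-based argument applied to the $\subset^\ast$-cofinal chain that would have to witness $\I \; {\leq}_{T} \; {\omega}_{1}$, making $X$ a sixth cofinal type of size ${\aleph}_{1}$ and violating (2). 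If instead $\b = {\aleph}_{1}$, a parallel construction starting from an unbounded ${<}^{\ast}$-increasing ${\omega}_{1}$-sequence in $\BS$ produces an analogous witness to the failure of (2).

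For the direction (1) $\Rightarrow$ (2), I follow the template of Todorcevic's proof of Theorem \ref{thm:five} in \cite{cofinal}. Given a directed $\langle D, \leq \rangle$ of size at most ${\aleph}_{1}$, apply $\PID$ to the ideal of countable bounded subsets of $D$. The first alternative of $\PID$ readily gives $D \; {\equiv}_{T} \; {\[{\omega}_{1}\]}^{< \omega}$. The second alternative produces a countable decomposition $D = {\bigcup}_{n \in \omega}{{D}_{n}}$ into orthogonal pieces, and each ${D}_{n}$ must be classified as one of $1$, $\omega$, ${\omega}_{1}$, or $\omega \times {\omega}_{1}$. In \cite{cofinal} this classification invokes $\p > {\omega}_{1}$ to supply pseudo-intersections to ${\aleph}_{1}$-sized filter bases arising in the analysis of each ${D}_{n}$. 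My plan is to bifurcate these uses according to the nature of the filter base: whenever the required pseudo-intersection is a ${\leq}^{\ast}$-upper bound for a family in $\BS$, replace $\p > {\omega}_{1}$ by an appeal to $\b > {\omega}_{1}$; whenever it is a $\subset$-upper bound in a tall $\sigma$-pseudobounded ${F}_{\sigma}$ ideal, replace it by an appeal to $\cof({\F}_{\sigma}) > {\omega}_{1}$.

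The main obstacle is the audit of \cite{cofinal}: one must verify that every use of $\p > {\omega}_{1}$ in that proof falls into exactly one of the two categories above. Concretely, for each ${D}_{n}$ in the orthogonal decomposition, the plan is to exhibit either a cofinal embedding of ${D}_{n}$ into $\langle \BS, {\leq}^{\ast} \rangle$ (in which case $\b > {\aleph}_{1}$ finishes that case) or a cofinal embedding of ${D}_{n}$ into a tall $\sigma$-pseudobounded ${F}_{\sigma}$ ideal (in which case $\cof({\F}_{\sigma}) > {\aleph}_{1}$ finishes it). Ruling out a third source of pseudo-intersection demands --- some ${D}_{n}$ that escapes both alternatives --- and thereby converting Todorcevic's argument into one sensitive only to $\min\{\b, \cof({\F}_{\sigma})\}$ rather than the more generous $\p$, is the delicate step and is expected to occupy the bulk of the proof; it is here that the specific restriction to $\sigma$-pseudobounded ${F}_{\sigma}$ ideals in Definition \ref{def:newcardinal} (rather than arbitrary ${F}_{\sigma}$ ideals) becomes essential.
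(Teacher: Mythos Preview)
Your $(2)\Rightarrow(1)$ direction is essentially the paper's (the extra $\PID$-based step to exclude the type $\omega_1$ is unnecessary: since $1,\omega,\omega_1\;{\leq}_T\;\omega\times\omega_1$, Lemma~\ref{lem:idealnotpseudo} already rules all three out once it gives $\I\;{\not\leq}_T\;\omega\times\omega_1$).

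The $(1)\Rightarrow(2)$ direction has a genuine gap. First, the ideal to feed into $\PID$ is not the ideal of countable \emph{bounded} subsets of $D$---that one need not be a P-ideal, and its first alternative would give an uncountable set all of whose countable subsets are bounded, the opposite of what you claim. The correct ideal is $\I=\{A\in{[D]}^{\leq\omega}:\forall x\in D\ \lc A\cap\pred(x)\rc<\omega\}$, and verifying it is a P-ideal is precisely where $\b>\omega_1$ enters. More seriously, you misidentify the role of $\cof({\F}_{\sigma})>\omega_1$: it is not a substitute for $\p>\omega_1$ that supplies $\subset$-upper bounds or pseudo-intersections. The paper does not audit \cite{cofinal}; it introduces a new dichotomy. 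One asks whether some countable $A\subset D$ has \emph{tall} trace $\J=\tr(D,A)=\{B\subset A:\exists x\in D\ [B\subset\pred(x)]\}$. If no such $A$ exists, the argument runs entirely on $\b>\omega_1$ and $\PID$, with no appeal to $\cof({\F}_{\sigma})$ at all. If such an $A$ does exist, then $x\mapsto\pred(x)\cap A$ exhibits a directed cofinal $X\subset\J$ of size at most $\aleph_1$; one applies $\PID$ to a second P-ideal living on $\J$, and under the second $\PID$ alternative one proves $\J$ is $\sigma$-pseudobounded. Now $\cof({\F}_{\sigma})>\omega_1$ is invoked \emph{only} to conclude that $\J$ is not $F_\sigma$, and a short closure argument in $\Pset(A)$ turns this into a contradiction---so in the tall case only the first $\PID$ alternative survives, yielding $D\;{\equiv}_T\;{[\omega_1]}^{<\omega}$. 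The tall/non-tall split on the trace ideal, and the use of $\cof({\F}_{\sigma})$ as an obstruction rather than as a source of bounds, are the ideas your plan is missing.
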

\begin{proof}
By the results of Todorcevic in \cite{partition} it follows that if $\b =
{\omega}_{1}$, then there is a directed set of size ${\omega}_{1}$ whose cofinal
type is different from any of $1$, $\omega$, ${\omega}_{1}$, $\omega \times
{\omega}_{1}$, and ${\[{\omega}_{1}\]}^{< \omega}$.

Next, suppose that $\cof({\F}_{\sigma}) = {\omega}_{1}$.
Let $\I$ and $X \subset \I$ witness this.
Then $\lc X \rc = {\omega}_{1}$, and by Lemma \ref{lem:idealnotpseudo}, the
cofinal type of $\langle X, \subset \rangle$ is not one of $1$, $\omega$,
${\omega}_{1}$, $\omega \times {\omega}_{1}$, and ${\[{\omega}_{1}\]}^{<
\omega}$. This proves $\neg (1) \implies \neg (2)$.

For the other direction, we assume (1) and prove (2).
It is easy to see (for example see \cite{isbell}) that if $D$ is a directed set
of size at most ${\aleph}_{1}$ and $D \; {\not\geq}_{T} \; {\omega}_{1} \times \omega$, then $D \; {\equiv}_{T} \; 1$, or $D \; {\equiv}_{T} \; \omega$, or $D \; {\equiv}_{T} \; {\omega}_{1}$.
Therefore, fixing a directed set $D$ with $\lc D \rc \leq {\aleph}_{1}$, it is
sufficient to show that either $D$ contains an uncountable set $X$ all of whose
infinite subsets are unbounded (in which case ${\[{\omega}_{1}\]}^{< \omega} \;
{\equiv}_{T} \; D$) or else that $D = {\bigcup}_{n \in \omega}{{X}_{n}}$ where
for each $n \in \omega$ and each $A \in {\[{X}_{n}\]}^{\omega}$, $A$ is bounded
in $D$ (in which case $D \; {\leq}_{T} \; {\omega}_{1} \times \omega$).
The proof of this proceeds in two cases.
We first make some preliminary remarks.
For $x \in D$, $\pred(x)$ denotes $\{y \in D: y \leq x\}$.
Let $A \subset D$.
Define the \emph{trace of $D$ on $A$}, $\tr(D, A) = \{B \subset A: \exists x \in
D \[B \subset \pred(x)\]\}$.
Note that since $D$ is directed, $\tr(D, A)$ is an ideal on $A$.

Case I: For each $A \in {\[D\]}^{\leq \omega}$, $\tr(D, A)$ is not tall.
Let $\I = \{A \in {\[D\]}^{\leq \omega}: \forall x \in D \[ \lc A \cap \pred(x)
\rc < \omega \]\}$.
It is easy to see that $\I$ is an ideal.
To see that it is a P-ideal, fix $\{{A}_{n}: n \in \omega \} \subset \I$.
Without loss of generality, the ${A}_{n}$ are pairwise disjoint and infinite.
For each $x \in D$ and $n \in \omega$, put $\pred(x, n) = \pred(x) \cap
{A}_{n}$; this is a finite subset of ${A}_{n}$.
As $\lc D \rc \leq {\omega}_{1} < \b$, it is possible to find ${F}_{n} \subset
{A}_{n}$ such that
\begin{align*}
 \forall x \in D \forallbutfin n \in \omega \[\pred(x, n) \subset {F}_{n}\].
\end{align*}
Putting $A = {\bigcup}_{n \in \omega}{\left( {A}_{n} \setminus {F}_{n}
\right)}$, it is clear that $A \in \I$ and that $\forall n \in \omega \[{A}_{n} \; {\subset}^{\ast} \; A\]$.
Now, if there is an uncountable $X \subset D$ such that ${\[X\]}^{\leq \omega}
\subset \I$, then it is clear that every infinite subset of $X$ is unbounded in
$D$, and therefore $D \; {\equiv}_{T} \; {\[{\omega}_{1}\]}^{< \omega}$.
So suppose that there exist $\{{X}_{n}: n \in \omega\}$ which are pairwise
disjoint such that $D = {\bigcup}_{n \in \omega}{{X}_{n}}$, and $\forall n \in \omega \[{\[{X}_{n}\]}^{\omega} \cap \I = 0\]$.
We claim that for each $n \in \omega$, and each $A \in {\[{X}_{n}\]}^{\omega}$,
$A$ is bounded in $D$.
This is sufficient to show that $D \; {\leq}_{T} \; {\omega}_{1} \times \omega$.
Fix $n \in \omega$ and $A \in {\[{X}_{n}\]}^{\omega}$.
The hypothesis of Case I implies that either $\tr(D, A)$ is not a proper ideal
on $A$ or that there exists $C \in {\[A\]}^{\omega}$ such that $\forall B \in \tr(D, A)\[\lc C \cap B \rc < \omega\]$.
Suppose for a moment that $\exists C \in {\[A\]}^{\omega}\forall B \in \tr(D, A)\[\lc C \cap B \rc < \omega\]$.
Then for any $x \in D$, $\pred(x) \cap C$ is finite, whence $C \in \I$.
However, this means that $C \in \I \cap {\[{X}_{n}\]}^{\omega}$, contradicting
$\I \cap {\[{X}_{n}\]}^{\omega} = 0$.
Therefore, it must be the case that $\tr(D, A)$ is not a proper ideal on $A$.
So $A \in \tr(D, A)$, whence $A$ is bounded.
This completes Case I.

Case II: There exists $A \in {\[D\]}^{\omega}$ for which $\J = \tr(D, A)$ is a
tall ideal on $A$.
In particular, $\J$ is a proper ideal on $A$.
Identifying $A$ with a copy of $\omega$, it makes sense to talk about the
descriptive complexity of $\J$.
Put $X = \{\pred(x) \cap A: x \in D\} \subset \J$, and note that $X$ is a
cofinal subset of $\langle \J, \subset \rangle$ of size at most ${\omega}_{1}$.
Moreover, $X$ is directed.
To see this if $x, y \in D$, then choosing $z \in D$ such that $x, y \leq z$, it
is clear that $\pred(z) \cap A \in X$, and that $\pred(x) \cap A \subset \pred(z) \cap A$ and $\pred(y) \cap A \subset \pred(z) \cap A$.
Define $\I = \{\F \in {\[\J\]}^{\leq \omega}: \forall B \in \J \[\lc \F \cap \Pset(B)\rc < \omega \]\}$.
Using the fact that $\J$ has a cofinal subset of size at most ${\omega}_{1}$ and
the hypothesis that $\b > {\omega}_{1}$, it is easy to check that $\I$ is a
P-ideal.
First suppose there is an uncountable $\GG \subset \J$ such that
${\[\GG\]}^{\leq \omega} \subset \I$.
For each $B \in \GG$ choose ${x}_{B} \in D$ such that $B \subset \pred({x}_{B})$.
Let $\HH \subset \GG$ be infinite (not necessarily countable).
We claim that $\{{x}_{B}: B \in \HH\}$ is unbounded in $D$.
For if not, then fix $x \in D$ such that $x \geq {x}_{B}$, for all $B \in \HH$,
and note that $\pred(x) \cap A \in \J$.
Now for any $B \in \HH$, $B \subset \pred({x}_{B}) \subset \pred(x)$, and so $B \in \Pset(\pred(x) \cap A)$.
Thus if $\LLL \in {\[\HH\]}^{\omega}$, then $\LLL \cap \Pset(\pred(x) \cap A)$
is infinite, whence $\LLL \notin \I$, contradicting ${\[\GG\]}^{\leq \omega} \subset \I$.
It follows that $\{{x}_{B}: B \in \GG\}$ is an uncountable subset of $D$, and
that no infinite subset of it is bounded in $D$.

Next, suppose that there exist $\{{\GG}_{n}: n \in \omega\}$ which are pairwise
disjoint such that $\J = {\bigcup}_{n \in \omega}{{\GG}_{n}}$ and $\forall n \in \omega \[{\[{\GG}_{n}\]}^{\omega} \cap \I = 0\]$.
We first claim that each ${\GG}_{n}$ is pseudobounded in $\J$, so that $\J$ is
$\sigma$-pseudobounded.
Fix $n \in \omega$ and let $\F \in {\[{\GG}_{n}\]}^{\omega}$.
Since $\F \notin \I$ and $\F \in {\[\J\]}^{\leq \omega}$, there exists $B \in
\J$ such that $\F \cap \Pset(B)$ is infinite.
It is clear that $\LLL = \F \cap \Pset(B) \in {\[\F\]}^{\omega}$ and that for
all $C \in \LLL$, $C \subset B$, whence $\LLL$ is bounded in $\J$.
It now follows from the assumption that $\cof({\F}_{\sigma}) > {\omega}_{1}$
that $\J$ is not an ${\F}_{\sigma}$ ideal (with respect to the natural topology
on $\Pset(A)$).
For each $n \in \omega$, let ${\HH}_{n}$ be the closure of ${\GG}_{n}$ with
respect to the usual topology on $\Pset(A)$.
So there is $n \in \omega$ such that ${\HH}_{n} \not\subset \J$.
Fix such $n$ and fix $C \in {\HH}_{n} \setminus \J$.
Let $\{{a}_{i}: i \in \omega\}$ enumerate $A$.
Choose $\{{B}_{m}: m \in \omega\} \subset {\GG}_{n}$ such that for each $m \in
\omega$, ${B}_{m} \cap \{{a}_{i}: i < m \} = C \cap \{{a}_{i}: i < m \}$ and
moreover $\forall i < m \[{B}_{i} \neq {B}_{m}\]$.
Thus $\{{B}_{m}: m \in \omega\} \in {\[{\GG}_{n}\]}^{\omega}$, and so there
exists $B \in \J$ for which $\existsinf m \in \omega \[{B}_{m} \subset B \]$.
We claim that this implies that $C \subset B$.
Indeed if ${a}_{i} \in C$ for some $i \in \omega$, then choose $m > i$ such that
${B}_{m} \subset B$.
Then it is clear that ${a}_{i} \in {B}_{m} \subset B$.
Thus it follows that $C \in \J$, a contradiction.
Since we have a contradiction from the second alternative of $\PID$, it must be
that in Case II the first alternative always occurs.
This finishes the proof.
\end{proof}
A noteworthy feature of this result is that the cardinal $\cof({\F}_{\sigma})$ speaks about the cofinal structure of definable ideals of size continuum while (2) of Theorem \ref{thm:5yes} is part of the general theory of cofinal types.
\begin{Cor} \label{cor:suslinfive}
 $\PFA(\Sa)$ implies that the coherent Suslin tree $\Sa$ forces that $1$,
$\omega$, ${\omega}_{1}$, $\omega \times {\omega}_{1}$, and
${\[{\omega}_{1}\]}^{< \omega}$ are the only cofinal types of directed sets of
size at most ${\aleph}_{1}$.
\end{Cor}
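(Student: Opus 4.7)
The plan is to deduce the corollary directly from Theorem \ref{thm:5yes} by verifying its hypotheses in the generic extension $V^{\Sa}$. So, working in a ground model $V$ satisfying $\PFA(\Sa)$, I would show that in $V^{\Sa}$ three things hold: (a) $\PID$, (b) $\b > {\omega}_{1}$, and (c) $\cof({\F}_{\sigma}) > {\omega}_{1}$. Once these are in hand, Theorem \ref{thm:5yes} applied inside $V^{\Sa}$ yields that $1$, $\omega$, ${\omega}_{1}$, $\omega \times {\omega}_{1}$, and ${\[{\omega}_{1}\]}^{< \omega}$ are the only cofinal types of directed sets of size at most ${\aleph}_{1}$ in $V^{\Sa}$, which is the content of the corollary.

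For (a), I would appeal to the standard fact, noted in the discussion following Definition \ref{def:pfas}, that after forcing over a model of $\PFA(\Sa)$ with the coherent Suslin tree $\Sa$, one recovers all those consequences of $\PFA$ that are compatible with $\PID + \p = \omega_1$; in particular $\PID$ holds in $V^{\Sa}$. For (b), Hechler forcing is proper and preserves any coherent Suslin tree, so standard $\PFA(\Sa)$ bookkeeping gives $\b = \aleph_{2}$ already in $V$, and this is preserved by the ccc forcing $\Sa$ of cardinality $\aleph_{1}$, so $\b = \aleph_2 > \omega_1$ in $V^{\Sa}$.

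For (c), I would use the inequality $\cov(\M) \leq \cof({\F}_{\sigma})$ noted right after Definition \ref{def:newcardinal}. The Cohen algebra is proper and preserves $\Sa$ (again since $\Sa$ is coherent), so iterating Cohen forcing inside a $\PFA(\Sa)$-argument gives $\cov(\M) = \aleph_2$ in $V$, and this invariant is preserved by the ccc tree forcing $\Sa$. Hence $\cof({\F}_{\sigma}) \geq \cov(\M) = \aleph_2 > \omega_1$ in $V^{\Sa}$.

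The only delicate point is really the preservation of $\PID$ under forcing with $\Sa$; the invariant inequalities (b) and (c) are largely bookkeeping once one accepts the folklore that generic filters for proper, $\Sa$-preserving posets can be obtained from $\PFA(\Sa)$. I would therefore expect the bulk of any detailed write-up to be a remark citing the preservation of $\PID$ by Suslin-tree forcing, with the rest being the one-line application of Theorem \ref{thm:5yes}.
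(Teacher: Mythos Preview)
Your plan matches the paper's: verify $\PID$, $\b > {\omega}_{1}$, and $\cof({\F}_{\sigma}) > {\omega}_{1}$ in ${\V}^{\Sa}$, then invoke Theorem~\ref{thm:5yes}; for the first two the paper, like you, simply cites Todorcevic~\cite{todcoh}. The difference is in how the third item is established. You reduce to the inequality $\cov(\M) \leq \cof({\F}_{\sigma})$ and argue that $\cov(\M) > {\omega}_{1}$ holds in $\V$ (via $\PFA(\Sa)$ applied to Cohen forcing) and is preserved by $\Sa$. The paper instead gives a direct argument: since $\Sa$ adds no reals, any tall ${F}_{\sigma}$ ideal $\J$ in ${\V}^{\Sa}$ is coded in $\V$; given ${\omega}_{1}$ many $\Sa$-names ${\mathring{x}}_{\alpha}$ for members of $\J$ and a condition $p$, one decides each ${\mathring{x}}_{\alpha}$ below each $q \leq p$, uses $\p > {\omega}_{1}$ in $\V$ to find $x \in \cube$ almost disjoint from every decided value, and then uses tallness to get $y \in \J \cap {\[x\]}^{\omega}$, which $p$ forces not to lie below any ${\mathring{x}}_{\alpha}$. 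Your route is a clean modular reduction that leverages an inequality already recorded in the paper; the paper's route is self-contained and sidesteps that inequality entirely.

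Two small corrections to your justifications, neither fatal. First, the reason $\b$ and $\cov(\M)$ survive forcing with $\Sa$ is that $\Sa$ adds no reals, not merely that it is ccc (ccc forcing can alter both invariants). Second, Cohen and Hechler preserve $\Sa$ simply because they are $\sigma$-centered: their product with any Suslin tree is ccc and they add no cofinal branch. Coherence of $\Sa$ plays no role at this point.
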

\begin{proof}
 It is a theorem of Todorcevic~\cite{todcoh} that if $\V$ satisfies $\PFA(\Sa)$ and if $G$ is $(\V, \Sa)$-generic, then in $\V\[G\]$, both $\PID$ and $\b > {\omega}_{1}$ hold.
 So in view of Theorem \ref{thm:5yes}, it suffices to check that in $\V\[G\]$,
$\cof(\J) > {\omega}_{1}$ for any tall ${F}_{\sigma}$ ideal $\J$.
 As $\Sa$ does not add any reals, all ${F}_{\sigma}$ ideals in $\V\[G\]$ are
coded in $\V$.
 So fix $\J \in \V$ a code for an ${F}_{\sigma}$ ideal, and suppose $\forces
``\J \ \text{is tall}''$.
 Let $\{{\mathring{x}}_{\alpha}: \alpha < {\omega}_{1}\} \subset {\V}^{\Sa}$
with $\forces ``{\mathring{x}}_{\alpha} \in \J''$, for each $\alpha <
{\omega}_{1}$.
 We claim that $\forces {\{{\mathring{x}}_{\alpha}: \alpha < {\omega}_{1}\} \ \text{is not cofinal in} \ \J}$.
 Fix $p \in \Sa$.
 For each $q \leq p$ and $\alpha < {\omega}_{1}$ find $x(\alpha, q) \in \J$ and
$r(\alpha, q) \leq q$ such that $r(\alpha, q) \; \forces \;
{\mathring{x}}_{\alpha} = x(\alpha, q)$.
 Since $\p > {\omega}_{1}$ holds in $\V$ and since $\J$ is a proper ideal, find
$x \in \cube$ such that $\forall \alpha < {\omega}_{1} \forall q \leq p \[x \;
{\subset}^{\ast} \; \left( \omega \setminus x(\alpha, q) \right)\]$.
 Since $\forces ``\J \ \text{is tall}''$, there is $y \in \J \cap
{\[x\]}^{\omega}$.
 Now it is clear that for each $\alpha < {\omega}_{1}$ $p \; \forces \; y \; \not\subset \; {\mathring{x}}_{\alpha}$.
\end{proof}
\begin{Cor} \label{cor:add}
$\PID + \add(\M) > {\omega}_{1}$ implies that $1$, $\omega$, ${\omega}_{1}$,
$\omega \times {\omega}_{1}$, and ${\[{\omega}_{1}\]}^{< \omega}$ are the only
cofinal types of directed sets of size at most ${\aleph}_{1}$.
\end{Cor}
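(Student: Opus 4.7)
The plan is to deduce Corollary \ref{cor:add} as a direct consequence of Theorem \ref{thm:5yes} by verifying that $\add(\M) > {\omega}_{1}$ implies both $\b > {\omega}_{1}$ and $\cof({\F}_{\sigma}) > {\omega}_{1}$, so that $\min\{\b, \cof({\F}_{\sigma})\} > {\omega}_{1}$ and clause (1) of Theorem \ref{thm:5yes} is satisfied.

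For the first inequality, I would invoke the standard Cicho\'n diagram bound $\add(\M) \leq \b$, which gives $\b > {\omega}_{1}$ immediately. For the second, I would combine the standard inequality $\add(\M) \leq \cov(\M)$ with the inequality $\cov(\M) \leq \cof({\F}_{\sigma})$ that is recorded in the paragraph following Definition \ref{def:newcardinal}; chaining these yields $\add(\M) \leq \cof({\F}_{\sigma})$, hence $\cof({\F}_{\sigma}) > {\omega}_{1}$. With both invariants above ${\omega}_{1}$, Theorem \ref{thm:5yes} (applied under the standing hypothesis $\PID$) delivers the classification of cofinal types in conclusion (2).

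There is essentially no obstacle here beyond citing the correct inequalities from the Cicho\'n diagram; the corollary is really a repackaging of Theorem \ref{thm:5yes} using a more commonly occurring cardinal invariant. If one wished to be self-contained about the auxiliary inequality $\cov(\M) \leq \cof({\F}_{\sigma})$, the quickest route is the standard observation that a tall $F_{\sigma}$ ideal $\I$ on $\omega$ is meager, so that for a cofinal $X \subset \I$ the closed sets $M_{x} = \{y \in {2}^{\omega}: \{n: y(n)=1\} \; {\subset}^{\ast} \; x\}$, $x \in X$, cover a comeager subset of ${2}^{\omega}$; this costs at most $\lc X \rc$ meager sets in total. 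Once this is noted, the proof of Corollary \ref{cor:add} is a one line application of Theorem \ref{thm:5yes}.
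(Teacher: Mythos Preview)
Your main line of argument is correct and is precisely the derivation the paper intends: Corollary~\ref{cor:add} is stated without proof because it follows immediately from Theorem~\ref{thm:5yes} together with the Cicho\'n-diagram inequality $\add(\M)\leq\b$ and the chain $\add(\M)\leq\cov(\M)\leq\cof({\F}_{\sigma})$ recorded just after Definition~\ref{def:newcardinal}.

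One caveat about your optional ``self-contained'' sketch of $\cov(\M)\leq\cof({\F}_{\sigma})$: it does not work as written. The sets $M_x=\{y\in 2^\omega:\{n:y(n)=1\}\;{\subset}^{\ast}\;x\}$ are $F_\sigma$ rather than closed, and, more importantly, their union over a cofinal $X\subset\I$ is exactly $\I$ itself viewed inside $2^\omega$; since $\I$ is a proper Borel ideal it is meager, not comeager, so these sets do not cover $2^\omega$ and no bound on $\cov(\M)$ follows. A correct argument for the inequality requires a different idea, but since the paper simply records it as ``easy to see,'' citing it---as you do in your main argument---is entirely sufficient for the corollary.
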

Next, we work towards showing that $\b$ and $\cof({\F}_{\sigma})$ are mutually
independent even in the presence of $\PID$
\begin{Def} \label{def:poly}
A sequence $I = \langle {k}_{n}: n \in \omega \rangle$ is called an
\emph{interval partition} if ${k}_{0} = 0$ and $\forall n \in \omega\[{k}_{n} <
{k}_{n + 1}\]$.
For an interval partition $I$ and $n \in \omega$, ${I}_{n} = [{k}_{n}, {k}_{n +
1})$.
Let $I$ be an interval partition such that $\forall n \in \omega \[\lc {I}_{n} \rc \geq {2}^{n}\]$.
In the rest of this section, \emph{polynomial} means a polynomial with integer
co-efficients.
Define ${\I}_{\mathrm{poly}}(I)$ to be $\{A \subset \omega: \text{there is a
polynomial} \ p(n) \ \text{such that} \ \forall n \in \omega \[\lc {I}_{n} \cap A \rc \leq p(n) \]\}$.
It is clear that this an ${F}_{\sigma}$ ideal on $\omega$.
\end{Def}
\begin{Def} \label{def:laverproperty}
 A poset $\P$ is said to have the \emph{Laver property} if for each sequence of
finite sets $\langle H(n): n \in \omega \rangle$ in the ground model $\V$, for
each $(\V, \P)$-generic $G$, and for each $f \in \V[G] \cap {\prod}_{n \in
\omega}{H(n)}$, there is a $K \in \V \cap {\prod}_{n \in \omega}{\Pset(H(n))}$
such that $\forall n \in \omega \[\lc K(n) \rc \leq n + 1 \wedge f(n) \in
K(n)\]$.
\end{Def}
The following is a special case of a result of Hru{\v{s}}{\'a}k, Rojas-Rebolledo, and Zapletal\cite{coffsigma}.
\begin{Lemma}[Hru{\v{s}}{\'a}k, Rojas-Rebolledo, and Zapletal]\label{lem:laver}
Fix $I \in \V$.
Let $\P$ be any poset with the Laver property.
Then ${\forces}{``\V \cap {\I}_{\mathrm{poly}}(I) \ \text{is a cofinal subset
of} \ {\I}_{\mathrm{poly}}(I)''}$.
\end{Lemma}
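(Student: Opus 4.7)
The plan is to use the Laver property directly on the characteristic function of the unknown set, cut up according to the interval partition $I$.

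First, suppose $\mathring{A} \in \V^{\P}$ with $\forces \mathring{A} \in {\I}_{\mathrm{poly}}(I)$, and fix $p \in \P$. Since a polynomial with integer coefficients is coded by a finite tuple of integers, every such polynomial belongs to $\V$, and by a density argument there exist $q \leq p$ and a fixed polynomial $\pi \in \V$ such that $q \; \forces \; \forall n \in \omega \[\lc {I}_{n} \cap \mathring{A} \rc \leq \pi(n)\]$. It is enough to produce $B \in \V \cap {\I}_{\mathrm{poly}}(I)$ and $r \leq q$ with $r \; \forces \; \mathring{A} \subset B$, because then the set of conditions forcing $\mathring{A}$ to be contained in some member of $\V \cap {\I}_{\mathrm{poly}}(I)$ is dense below $p$, and $p$ was arbitrary.

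Next I would define, in the ground model, the sequence of finite sets $H(n) = {\[{I}_{n}\]}^{\leq \pi(n)}$; these are in $\V$ because $I, \pi \in \V$. Let $\mathring{f}$ be the $\P$-name for the function $n \mapsto {I}_{n} \cap \mathring{A}$. Then $q \; \forces \; \mathring{f} \in {\prod}_{n \in \omega}{H(n)}$. Passing to a $(\V, \P)$-generic $G$ containing $q$ and applying the Laver property in $\V[G]$, obtain $K \in \V \cap {\prod}_{n \in \omega}{\Pset(H(n))}$ with $\lc K(n) \rc \leq n + 1$ and $\mathring{f}\[G\](n) \in K(n)$ for all $n \in \omega$. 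A standard argument on names then yields such a $K \in \V$ together with some $r \leq q$ forcing $\forall n \in \omega \[\mathring{f}(n) \in K(n)\]$.

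Finally define $B = {\bigcup}_{n \in \omega}{\bigcup K(n)} \subset \omega$. Since $K \in \V$, we have $B \in \V$. For each $n \in \omega$, the members of $K(n)$ are subsets of ${I}_{n}$ of size at most $\pi(n)$, and $\lc K(n) \rc \leq n + 1$, so $\lc {I}_{n} \cap B \rc = \lc \bigcup K(n) \rc \leq (n + 1)\pi(n)$, which is again a polynomial in $n$. Hence $B \in {\I}_{\mathrm{poly}}(I)$. Moreover $r \; \forces \; {I}_{n} \cap \mathring{A} = \mathring{f}(n) \in K(n)$, so ${I}_{n} \cap \mathring{A} \subset \bigcup K(n) \subset B$ for every $n$, and therefore $r \; \forces \; \mathring{A} \subset B$, as required.

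The only subtle point is the bookkeeping between names and realisations: one must verify that the Laver property, which is stated for $f \in \V[G]$, can be applied uniformly enough below $q$ to locate a single ground-model $K$ and a single extension $r \leq q$ doing the job; this is routine, using that the statement ``$\mathring{f}(n) \in K(n)$'' is decided by a countable family of dense sets and that $\{\mathring{f}(n): n \in \omega\}$ is covered by the ground-model $K$ in every generic extension.
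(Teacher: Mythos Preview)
Your proposal is correct and follows essentially the same approach as the paper: both define $H(n) = \{s \subset {I}_{n}: \lc s \rc \leq \pi(n)\}$ for the witnessing polynomial, apply the Laver property to the function $n \mapsto A \cap {I}_{n}$, and take $B = {\bigcup}_{n}{\bigcup K(n)}$, observing that $(n+1)\pi(n)$ is again a polynomial. The only difference is presentational: the paper works directly in $\V[G]$ with an arbitrary $A \in \V[G] \cap {\I}_{\mathrm{poly}}(I)$ (the witnessing polynomial, being a finite object, is automatically in $\V$), which sidesteps the name-versus-realisation bookkeeping you flag in your final paragraph; your density argument with $q$ and $r$ is correct but unnecessary.
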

\begin{proof}
 In the ground model $\V$, define for each polynomial $p$ and $n \in \omega$, the set $H(p, n) = \{s \subset {I}_{n}: \lc s \rc \leq p(n)\}$.
 Let $G$ be $(\V, \P)$ generic and let $A \in \V\[G\] \cap
{\I}_{\mathrm{poly}}(I)$.
 Let $p$ be a polynomial witnessing this.
 Define ${F}_{A} \in {\prod}_{n \in \omega}{H(p, n)}$ by ${F}_{A}(n) = A \cap {I}_{n}$.
 By the Laver property, find $K \in \V \cap {\prod}_{n \in \omega}{\Pset(H(p, n))}$ such that $\forall n \in \omega \[\lc K(n) \rc \leq n + 1 \wedge
{F}_{A}(n) \in K(n)\]$.
 Working in $\V$ define ${s}_{n} = \bigcup K(n)$, for each $n \in \omega$.
 Clearly, ${s}_{n} \subset {I}_{n}$ and $\lc {s}_{n} \rc \leq (n + 1)p(n)$.
 Therefore, $B = {\bigcup}_{n \in \omega}{{s}_{n}} \in \V \cap
{\I}_{\mathrm{poly}}(I)$ and $A \subset B$.
\end{proof}
\begin{Lemma} \label{lem:nottop}
 For any $I$ there exist $\{{X}_{n}: n \in \omega\}$ such that
${\I}_{\mathrm{poly}}(I) = {\bigcup}_{n \in \omega}{{X}_{n}}$ and for each $n
\in \omega$, every infinite subset of ${X}_{n}$ has a further infinite subset
that is bounded in ${\I}_{\mathrm{poly}}(I)$. In other words, ${\I}_{\mathrm{poly}}(I)$ is $\sigma$-pseudobounded.
\end{Lemma}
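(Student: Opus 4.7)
The plan is to give an explicit countable decomposition of ${\I}_{\mathrm{poly}}(I)$ indexed by polynomials, and then to witness the pseudoboundedness of each piece via a diagonal compactness argument using the fact that the ``level sets'' $H(p,m)=\{s\subset{I}_{m}:\lc s\rc\leq p(m)\}$ are finite.

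First, since there are only countably many polynomials with integer coefficients, I would enumerate them as $\{{p}_{n}:n\in\omega\}$ (and, by replacing each ${p}_{n}(m)$ by $\max\{0,{p}_{n}(m)\}$, may assume each ${p}_{n}$ takes nonnegative values on $\omega$). Define
\[
{X}_{n}=\left\{A\subset\omega:\forall m\in\omega\[\lc {I}_{m}\cap A\rc\leq{p}_{n}(m)\]\right\}.
\]
Then ${\I}_{\mathrm{poly}}(I)={\bigcup}_{n\in\omega}{{X}_{n}}$ by the very definition of ${\I}_{\mathrm{poly}}(I)$.

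Next I would show that each ${X}_{n}$ is pseudobounded. Fix $n$ and an infinite $\F\subset{X}_{n}$; write $p={p}_{n}$. Each $A\in\F$ determines a point $\pi(A)=\langle A\cap{I}_{m}:m\in\omega\rangle$ of the product space $P={\prod}_{m\in\omega}{H(p,m)}$. Since each $H(p,m)$ is finite, $P$ is a compact metrizable space, so the infinite set $\pi\[\F\]$ has an accumulation point $\langle{C}_{m}:m\in\omega\rangle\in P$. By a straightforward recursive choice one can then select distinct ${A}_{0},{A}_{1},\ldots\in\F$ such that for every $k\in\omega$,
\[
\forall m\leq k\[{A}_{k}\cap{I}_{m}={C}_{m}\].
\]

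Finally, set $B={\bigcup}_{k\in\omega}{{A}_{k}}$. For each $m\in\omega$, the intersection $B\cap{I}_{m}$ is covered by $\left({\bigcup}_{k<m}{{A}_{k}\cap{I}_{m}}\right)\cup{C}_{m}$, whence
\[
\lc B\cap{I}_{m}\rc\leq m\cdot p(m)+p(m)=(m+1)p(m),
\]
and $(m+1)p(m)$ is again a polynomial in $m$. Hence $B\in{\I}_{\mathrm{poly}}(I)$, while by construction ${A}_{k}\subset B$ for every $k$, so $\{{A}_{k}:k\in\omega\}$ is an infinite subset of $\F$ bounded in ${\I}_{\mathrm{poly}}(I)$. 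This shows ${X}_{n}$ is pseudobounded, and completes the proof.

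The only ``nonroutine'' step is the compactness/diagonal argument in the middle paragraph, but it is essentially König's lemma applied to the finite-branching tree of finite approximations $\langle A\cap{I}_{0},\dotsc,A\cap{I}_{k}\rangle$ realized by elements of $\F$; I do not expect any genuine obstacle.
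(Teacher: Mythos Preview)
Your proof is correct and is essentially the same as the paper's: the paper enumerates the polynomials, defines the same sets ${X}_{n}$, observes that each ${X}_{n}$ is closed in the compact space $\Pset(\omega)$, picks a complete accumulation point $A$ of the given infinite family, and then selects ${B}_{m}$ agreeing with $A$ on ${\bigcup}_{i\leq m}{I}_{i}$ to get the same bound $(m+1){p}_{n}(m)$. Your use of the product space ${\prod}_{m}H(p,m)$ in place of $\Pset(\omega)$ is only a cosmetic reformulation of the same compactness argument.
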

\begin{proof}
 Let $\{{p}_{n}: n \in \omega\}$ enumerate all polynomials.
 Define ${X}_{n} = \{A \in {\I}_{\mathrm{poly}}(I): \forall m \in \omega \[\lc
{I}_{m} \cap A \rc \leq {p}_{n}(m)\]\}$.
 It is clear that ${\I}_{\mathrm{poly}}(I) = {\bigcup}_{n \in \omega}{{X}_{n}}$
and that each ${X}_{n}$ is a closed subset of $\Pset(\omega)$.
 Now fix $n$ and an infinite $Y \subset {X}_{n}$.
 Let $A \in {X}_{n}$ be a complete accumulation point of $Y$.
 For each $m \in \omega$ choose ${B}_{m} \in Y$ such that ${B}_{m} \cap \left(
{\bigcup}_{i \leq m}{{I}_{i}} \right) = A \cap \left( {\bigcup}_{i \leq
m}{{I}_{i}} \right)$ and moreover $\forall i < m \[{B}_{m} \neq {B}_{i} \]$.
 Thus $\{{B}_{m}: m \in \omega\} \in {\[Y\]}^{\omega}$.
 For each $i \in \omega$, put ${s}_{i} = {\bigcup}_{m \in \omega}{\left( {B}_{m} \cap {I}_{i}\right)}$.
 It is clear that ${s}_{i} = \left( {\bigcup}_{m < i}{\left( {B}_{m} \cap
{I}_{i}\right)} \right) \cup \left( A \cap {I}_{i}\right)$.
 Therefore $\lc {s}_{i} \rc \leq (i + 1){p}_{n}(i)$ and ${\bigcup}_{m \in
\omega}{{B}_{m}} \in {\I}_{\mathrm{poly}}(I)$, as needed.
\end{proof}
\begin{Theorem} \label{thm:bnotenough}
 Assume that there is a supercompact cardinal $\kappa$. Then $\PID + \b >
{\omega}_{1}$ does not imply $\cof({\F}_{\sigma}) > {\omega}_{1}$. Also there is a model of $\PID + \cof({\F}_{\sigma}) > {\omega}_{1}$, where $\d = {\omega}_{1}$ (\emph{a fortiori} $\b = {\omega}_{1}$).
\end{Theorem}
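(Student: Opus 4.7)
The plan is to construct two separate forcing models, both starting from a ground model satisfying $\CH$ in which the supercompact cardinal $\kappa$ lives.

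For the first assertion, the strategy is to iterate, with countable support of length $\kappa$, a carefully chosen family of proper forcings, each of which (i) has the Laver property, and (ii) is designed so that at each stage one generically resolves an instance of $\PID$ on a P-ideal named in a bookkeeping list, while at some cofinal set of stages Laver forcing is used in order to ensure $\b>{\omega}_1$ in the final model. Since Shelah's preservation theorem says that the Laver property is preserved under countable support iterations of proper forcings with the Laver property, the entire iteration has the Laver property. Because $\CH$ holds in the ground model $\V$, every $\I_{\mathrm{poly}}(I)$ has a $\V$-cofinal subset of size ${\aleph}_1$; Lemma~\ref{lem:laver} then guarantees that this $\V$-cofinal subset remains cofinal in $\I_{\mathrm{poly}}(I)$ of the extension. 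Hence $\cof({\F}_\sigma)={\omega}_1$ while $\PID$ and $\b>{\omega}_1$ both hold. The main obstacle is producing a Laver-property version of the standard $\PID$-forcing: one has to show that the proper forcing which runs through closed sets of a candidate P-ideal and generically chooses either a closed unbounded sequence of locally orthogonal or locally covered reflections can be taken to have the Laver property. This typically proceeds by adapting the fusion argument for $\PID$-forcing using a fusion of Laver-style trees in a product with the natural side conditions.

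For the second assertion, the plan is to iterate, again with countable support of length $\kappa$ over a $\CH$ ground model, a family of proper $\omega^\omega$-bounding forcings. Each iteration step either (a) handles an instance of $\PID$ via a proper $\omega^\omega$-bounding $\PID$-forcing, or (b) given a tall $F_\sigma$ ideal $\I$ on $\omega$ and a name for an ${\aleph}_1$-sized family $\{{Y}_\alpha:\alpha<{\omega}_1\}\subset\I$ enumerated by bookkeeping, forces a new $X\in\I$ so that $X\not\subset^{\ast}{Y}_\alpha$ for every $\alpha<{\omega}_1$. Since $\omega^\omega$-boundedness is preserved under countable support iteration of proper $\omega^\omega$-bounding forcings, and since the ground model has $\d={\omega}_1$, the cardinal $\d$ remains ${\omega}_1$ in the extension; in particular $\b={\omega}_1$. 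By the bookkeeping, no potential ${\aleph}_1$-sized cofinal subset of any tall $F_\sigma$ ideal survives, so $\cof({\F}_\sigma)>{\omega}_1$ in the final model, while $\PID$ also holds.

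The harder step of the second part is producing the diagonalizing forcing of type (b) as a proper $\omega^\omega$-bounding forcing. A natural attempt is to define the conditions as pairs $(s,\mathcal{F})$ with $s$ a finite ``stem'' of the eventual generic $X\in\I$ and $\mathcal{F}$ a finite side condition from the given family ${\{Y_\alpha\}}$ that forces $X\setminus\bigcup\mathcal{F}$ to be infinite inside $\I$; fusion through a suitable tree of polynomially-bounded finite sets should yield properness and $\omega^\omega$-boundedness in the style of the argument behind Lemma~\ref{lem:nottop}. One must then verify that this forcing composes with the $\PID$-steps while keeping the iteration $\omega^\omega$-bounding; here the classical Shelah preservation framework for $\omega^\omega$-bounding countable support iterations does the work. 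The final ingredient is a reflection argument at $\kappa$ to ensure that the bookkeeping catches every tall $F_\sigma$ ideal and every potentially-cofinal ${\aleph}_1$-sequence appearing in the intermediate models, which is where the supercompactness of $\kappa$ enters.
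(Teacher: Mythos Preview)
Your overall architecture matches the paper's: two countable-support iterations of length $\kappa$ guided by a Laver diamond, one interleaving $\PID$-forcing with Laver forcing, the other interleaving $\PID$-forcing with an ideal-diagonalizing step. Where you diverge is in locating the work.

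For the first model, what you call ``the main obstacle'' is in fact a non-issue. The paper invokes the results of \cite{PID} and \cite{GPID} that for any P-ideal $\I$ there is a proper poset ${\P}_{\I}$ \emph{not adding reals} which forces the $\PID$ alternative for $\I$. A poset not adding reals trivially has the Laver property, so no adaptation or fusion-of-Laver-trees argument is needed. Your proposed reworking of the $\PID$ forcing would be wasted effort; the iteration is simply ${\P}_{\I}$ or Laver forcing at each stage, both of which have the Laver property by inspection.

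For the second model the same remark applies to the $\PID$ iterands: they add no reals, hence are ${\omega}^{\omega}$-bounding for free. For the diagonalizing step you propose to build from scratch a proper ${\omega}^{\omega}$-bounding poset that adds $X \in \I$ with $X \not\subset^{\ast} Y_\alpha$ for all $\alpha$. The paper instead quotes a known theorem of Laflamme~\cite{zapping}: for any ${F}_{\sigma}$ ideal $\I$ there is a proper ${\omega}^{\omega}$-bounding poset adding $a \in {[\omega]}^{\omega}$ almost disjoint from every ground-model element of $\I$. Tallness of $\I$ then yields, in the extension, some element of $\I$ meeting $a$ infinitely and hence not contained in any $x$ from the old family $X$. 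So again the step you flag as ``harder'' is handled by citation rather than by a new construction. Your stem-plus-side-condition poset is plausible but unverified as stated; in any case it is unnecessary once you know Laflamme's result.
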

\begin{proof}
By results of \cite{PID} and \cite{GPID}, if $\I$ is any P-ideal, then there is
a proper poset not adding reals, call it ${\P}_{\I}$, that forces $\PID$ with
respect to $\I$.
Using a Laver diamond in a ground model satisfying $\CH$, do a CS iteration $\langle {\P}_{\alpha}, {\mathring{\Q}}_{\alpha}: \alpha \leq \kappa \rangle$ as follows.
Given ${\P}_{\alpha}$, if the Laver diamond picks a ${\P}_{\alpha}$ name for a
P-ideal $\mathring{\I}$, then let ${\mathring{\Q}}_{\alpha}$ be a full
${\P}_{\alpha}$ name such that ${\forces}_{\alpha} {\mathring{\Q}}_{\alpha} =
{\P}_{\mathring{\I}}$.
Else let  ${\mathring{\Q}}_{\alpha}$ be a full ${\P}_{\alpha}$ name such that
${\forces}_{\alpha} {\mathring{\Q}}_{\alpha} \ \text{is Laver forcing}$.
Note that cofinally often we will have ${\forces}_{\alpha}
{\mathring{\Q}}_{\alpha} \ \text{is Laver forcing}$.
Also, note that each iterand is forced to have the Laver property, which is
preserved in CS iterations.
So if $G$ is $(\V, {\P}_{\kappa})$ generic, then in $\V\[G\]$, $\PID + \b >
{\omega}_{1}$ holds.
Moreover, if $I \in \V$, then by Lemma~\ref{lem:laver} $\V \cap
{\I}_{\mathrm{poly}}(I)$ is cofinal in $\VG \cap {\I}_{\mathrm{poly}}(I)$.
By Lemma \ref{lem:nottop}, in $\VG$, ${\I}_{\mathrm{poly}}(I)$ is a tall, $\sigma$-pseudobounded, ${F}_{\sigma}$ ideal, and $\V \cap {\I}_{\mathrm{poly}}(I)$ is a directed cofinal subset of $\VG \cap {\I}_{\mathrm{poly}}(I)$ of size ${\omega}_{1}$.
So it witnesses $\cof({\F}_{\sigma}) = {\omega}_{1}$.
This finishes the proof of the first statement.

For the second statement, we use the well-known result of Laflamme~\cite{zapping} that for any ground model $\V$ and any ${F}_{\sigma}$ ideal $\I$ on $\omega$ belonging to $\V$, there is a proper $\BS$-bounding poset ${\Q}_{\I} \in \V$ such that ${\Q}_{\I}$ adds an infinite subset of $\omega$ that is almost disjoint from every member of $\V \cap \I$.
Once again, fix a ground model satisfying $\CH$ and a Laver diamond in that ground model.
Do a CS iteration $\langle {\P}_{\alpha}, {\mathring{\Q}}_{\alpha}: \alpha \leq \kappa \rangle$ as follows.
Given ${\P}_{\alpha}$, if the Laver diamond picks a ${\P}_{\alpha}$ name for a P-ideal $\mathring{\I}$, then let ${\mathring{\Q}}_{\alpha}$ be a full ${\P}_{\alpha}$ name such that ${\forces}_{\alpha} \; {\mathring{\Q}}_{\alpha} = {\P}_{\mathring{\I}}$.
If the Laver diamond picks a pair of ${\P}_{\alpha}$ names $\langle \mathring{\I}, \mathring{X} \rangle$ such that ${\forces}_{\alpha} {``\mathring{\I} \ \text{is a tall} \ {F}_{\sigma} \ \text{ideal}''}$ and ${\forces}_{\alpha} {``\mathring{X} \subset \mathring{\I} \ \text{and} \ \lc \mathring{X} \rc < \kappa''}$, then let ${\mathring{\Q}}_{\alpha}$ be a full ${\P}_{\alpha}$ name for ${\Q}_{\mathring{\I}}$.
If neither of these happens, then ${\mathring{\Q}}_{\alpha}$ is a full ${\P}_{\alpha}$ name for the trivial poset.
Note that since each iterand is proper and $\BS$-bounding, ${\P}_{\kappa}$ is $\BS$-bounding.
Therefore, if $G$ is $(\V, {\P}_{\kappa})$ generic, then $\PID + \d = {\omega}_{1}$ holds in $\VG$.
Also in $\VG$, if $\I$ is a tall ${F}_{\sigma}$ ideal and $X \subset \I$ is of size at most ${\omega}_{1}$, then there exists $a \in \cube$ such that $\forall x \in X \[\lc x \cap a \rc < \omega\]$.
This implies that $X$ is not cofinal in $\I$.
Therefore, $\cof({\F}_{\sigma}) > {\omega}_{1}$ holds in $\VG$.
\end{proof}
\bibliographystyle{amsplain}
\bibliography{Bibliography}
\end{document}